\documentclass[a4paper,11pt,reqno]{amsart}
\usepackage{amsmath,amsthm,amssymb}
\usepackage{enumerate}
\usepackage[utf8]{inputenc}
\usepackage[english]{babel}
\usepackage{tikz-cd}
\usepackage{dynkin-diagrams}

\makeatletter
\@namedef{subjclassname@2020}{\textup{2020} Mathematics Subject Classification}
\makeatother

\newtheorem{thm}{Theorem}[section]
\newtheorem*{thm*}{Theorem}
\newtheorem{lem}[thm]{Lemma}
\newtheorem{cor}[thm]{Corollary}
\newtheorem{prop}[thm]{Proposition}
\theoremstyle{definition}
\newtheorem{defin}[thm]{Definition}
\newtheorem*{exp*}{Example}

\newtheorem{rem}[thm]{Remark}

\newtheorem*{rem*}{Remark}

\numberwithin{equation}{section}

\def\P{{\mathbb P}}

\def\D{{\mathcal D}}

\def\C{{\mathcal C}}
\def\F{{\mathcal F}}
\let\Cal\mathcal
\let\Bbb\mathbb

\newcommand{\si}{\sigma}
\newcommand{\im}{\operatorname{im}}
\newcommand{\ps}{\psi}
\newcommand{\x}{\times}

\newcommand{\ga}{\gamma}

\newcommand{\cO}{{\mathcal O}}
\newcommand{\cC}{{\mathcal C}}
\newcommand{\cE}{{\mathcal E}}
\newcommand{\cL}{{\mathcal L}}
\newcommand{\cV}{{\mathcal V}}

\newcommand{\cH}{{\mathcal H}}
\newcommand{\cP}{{\mathcal P}}
\renewcommand{\o}{{\circ}}
\def\g{{\mathfrak g}}
\def\h{{\mathfrak h}}

\def\p{{\mathfrak p}}
\def\q{{\mathfrak q}}

\def\b{{\mathfrak b}}

\def\s{{\mathfrak s}}
\def\l{{\mathfrak l}}

\DeclareSymbolFont{script}{U}{eus}{m}{n}
\DeclareMathSymbol{\Wedge}{0}{script}{"5E}
\newcommand{\wh}{\widehat}

\newcommand{\Id}{\operatorname{Id}}
\newcommand{\id}{\operatorname{Id}}

\begin{document}

\title{Geometry of conic connections} 
\author{Andreas \v Cap}
\author{Katharina Neusser}
\address{
A.\v C.: Faculty of Mathematics\\
  University of Vienna\\
  Oskar-Morgenstern-Platz 1\\
  1090 Vienna\\
  Austria\\
K.N.: Department of Mathematics and Statistics\\
  Masaryk University\\
  Kotl\'a\v rsk\'a 267/2 \\
  611 37 Brno\\
  Czech Republic}

\email{andreas.cap@univie.ac.at}
\email{neusser@math.muni.cz}
\subjclass{Primary: 53C10; Secondary: 53A40, 53C10, 53C56}
\keywords{cone structures; characteristic conic connections; cubic
  torsion; Bott connection; $G$-structures; principal connections.}

\begin{abstract} 

  A cone structure on a complex manifold $M$ is a closed submanifold
  $\C\subset \P TM$ of the projectivized tangent bundle of $M$ that is submersive
  over $M$. So this defines a set $\C_x$ of distinguished directions in each point
  $x\in M$. A conic connection on $\C$ is then a family of unparametrized curves on
  $M$ that comprises exactly one curve through each point $x\in M$ in each direction
  in $\C_x\subset \mathbb PT_xM$. This can be encoded by a line subbundle
  $\F\subset T\C$. The subclass of characteristic conic connections is defined by the
  vanishing of a simple invariant, called characteristic torsion. For those, one has
  a much more subtle and slightly mysterious invariant called the cubic torsion. The
  first aim of this article is to provide a new approach to the cubic torsion, which
  also leads to a geometric condition characterizing its vanishing.
  
  We then specialize to the case of isotrivial cone structures, for which the fibers
  of $\C$ are assumed to be of some fixed type. Such a structure induces a
  first-order $G$-structure on $M$ whose structure group is the projective
  automorphism group of the model fiber. Moreover, any connection $\gamma$ on the
  associated $G$-structure induces a conic connection $\F^\gamma$ on $\C$. Assuming
  that the model fiber is homogeneous, we study the relation between the torsion and
  curvature of a connection $\gamma$ and the characteristic and cubic torsion of
  $\F^\gamma$. As an application we discuss cone structures of subadjoint type,
  showing in particular that there are such structures admitting conic connections
  with vanishing characteristic and cubic torsion that are not locally flat.
\end{abstract}
\maketitle

\section{Introduction}
A cone structure on complex manifold $M$ can be viewed as specifying a cone of
distinguished directions in each point of a complex manifold $M$. Formally, it is
defined as a closed submanifold $\cC$ of the projectivized tangent bundle
$\mathbb PTM$ of $M$ such that the natural projection $\cC\rightarrow M$ is a
surjective submersion. Such structures arise naturally in differential and algebraic
geometry, as well as mathematical physics. An important classical example is a
holomorphic conformal structure, which can most naturally be viewed as the
specification of a null cone $\cC_x\subset \mathbb PT_xM$ in each tangent space that
depends holomorphically on $x$, see \cite{LeBrun}. More precisely, associating to a
holomorphic conformal structure its bundle of null cones induces an equivalence of
categories between holomorphic conformal structures on $n$-dimensional manifolds and
cone structures whose fibers are projectively isomorphic to the standard quadric
hypersurface $Q^{n-2}\subset \mathbb C\mathbb P^{n-1}$. In the real smooth category,
Lorentzian and, more generally, pseudo-Riemannian conformal structures, can similarly
be described as cone structures. Weakening the conditions on the cones leads to
so-called causal structures, see \cite{Omid} and references therein, which have been
also studied in physics.  Even more generally, on has the large variety of cone
structures that underlie certain parabolic geometries \cite{H-N}, which
are geometries infinitesimally modeled on generalized flag varieties by means of
Cartan connections, see \cite{csbook}.

In algebraic geometry, certain cone structures, known as VMRT-structures, play an
important role in the study of uniruled projective manifolds. The latter are compact
complex manifolds that can be embedded into some projective space and that are
covered by rational curves. Roughly speaking, given an uniruled projective manifold
$X$, one can select a distinguished family of rational curves, called minimal
rational curves. Considering the tangent directions of these minimal rational curves
through a generic point $x\in X $ gives rise to a variety
$\C_x\subset \mathbb PT_xX$, called the variety of minimal rational tangents (VMRT)
at $x$.  If $\C_x\subset \mathbb PT_xX$ is a closed embedded submanifold, which holds
in many cases, then the union of the $\C_x$ as $x$ varies over general points gives
rise to a cone structure, called VMRT-structure, on an open dense subset of $X$. The
theory of VMRT-structures has been developed by Hwang and Mok and has been
successfully applied to solve many problems in algebraic geometry, see the surveys
\cite{HwangICM, Hwang-Mok99, Hwang26}.

For VMRT-structures, the cones arise as the tangent directions of minimal rational
curves. Similarly, the null cones of a holomorphic conformal structures can be viewed
as the tangent directions of a distinguished family of curves, namely its null
geodesics. For a general cone structure, the choice of a family of (unparametrized)
curves which similarly define the directions in the cone can be captured in the
notion of a conic connection. More precisely, given $\cC\subset\mathbb P TM$, a conic
connection on $\cC$ is defined as a certain line subbundle $\F\subset T\cC$ which has
trivial intersection with the vertical bundle of $\cC\to M$, see Definition
\ref{def_conic_conn}. The projections to $M$ of the integral submanifolds of the
distribution $\F$ then form a family of curves on $M$ with exactly one curve through
any point $x\in M$ in any direction of $\C_x$. This generalizes the classical notion
of a path geometry on a manifold $M$, which is simply a conic connection on the
entire projectivized tangent bundle $\cC=\mathbb P TM$ of $M$. As in the examples
mentioned above, in many cases of interest, cone structures of a fixed class come
equipped with a natural conic connection.  Despite the omnipresence of cone
structures and conic connections in mathematics and physics, a general theory just
started to emerge and the present article can be considered as a contribution to
these developments.
 
Our article is motivated by \cite{H-N}, where Hwang and the second author defined
characteristic conic connections by the vanishing of a rather simple local invariant
called the characteristic torsion. For characteristic conic connections, they
introduced a much more subtle invariant called the cubic torsion. For the
tautological conic connections on VMRT-structures, vanishing of the characteristic
torsion follows from the results of \cite{Hwang-Mok04} and in Proposition 3.5 of
\cite{H-N} it was shown that also the cubic torsion vanishes in this
case. Furthermore, Theorem 4.41 of \cite{H-N} showed that for many types of cone
structures, arising from parabolic geometries, existence of a conic connection with
vanishing characteristic and cubic torsion already determines the local isomorphism
type of the cone structure. These results lead then to an alternative proof,
respectively a local geometric version, of the recognition theorems by Mok \cite{Mok}
and Hong--Hwang \cite{H-H}, which show that certain generalized flag varieties
can be recognized from their VMRT's at general points.

Despite the important role these local invariants of conic connections have played in
\cite{H-N}, the cubic torsion remained a bit of a mysterious object. The original
definition arose from a formula for the so-called harmonic torsion of a path geometry
associated to a choice of a local section of $\F$, which was derived using technical
machinery for parabolic geometries. It then turned out that this formula can be
easily generalized to make sense for any characteristic conic connection on any cone
structure and this provided the definition of the cubic torsion in \cite{H-N}.

One aim of our article is to shed light on the cubic torsion and its
significance. This is done in Section 2, whose main result is Theorem
\ref{geom_interpr_cubic_tor}, which gives a geometric interpretation of the cubic
torsion. This interpretation generalizes the situation of path geometries, where it
is known that the harmonic torsion mentioned above is the obstruction for the space
of paths to inherit a Grassmannian structure (which, in the holomorphic category,
then turns out to be necessarily locally flat). The results and methods leading to
this theorem, in particular Proposition \ref{prop_Bott}, also provide an alternative
way to obtain or define the cubic torsion, which we believe clarifies its nature and
explains the explicit formula mentioned above.

From Section 3 on, we specialize to the subclass of isotrivial cone structures. This
means that for all $x\in M$, the cones $\C_x\subset \mathbb PT_xM$ are projectively
isomorphic to a fixed closed submanifold $Z\subset \mathbb P W$ in the
projectivization of a complex vector space $W$ of dimension $\dim M$.  Let us denote
by $\textrm{Aut}(\widehat Z)\subset\textrm{\textrm{GL}}(W)$ the group projective
automorphisms of $Z\subset \mathbb PW$.  To any $Z$-isotrivial cone structure on a
manifold $M$ one can associate a first-order $G$-structure with structure group
$\textrm{Aut}(\widehat Z)$ on $M$, i.e.\ a reduction $\cP$ of the frame bundle of $M$
to this structure group. Moreover, any (principal) connection $\gamma$ on $\cP$
induces a conic connection $\F^{\gamma}$ on $\cC$ given by the geodesics in
directions of $\cC$.

The study of conic connections on isotrivial cone structures induced by principal
connections on the associated $G$-structures was initiated by Hwang and Li in
\cite{Hwang-Li-2}, see also the recent survey \cite{Hwang26}. In particular, Theorem
3.8 of \cite{Hwang-Li-2} shows that the vanishing of the characteristic torsion of a
conic connection $\F^\gamma$ is equivalent to the torsion of the inducing principal
connection $\gamma$ having values in a certain subspace, which can be described in
terms of the projective geometry of $Z\subset \mathbb PW$. In addition, Theorem 3.6
of \cite{Hwang-Li-2} shows that for many types of isotrivial cone structures any
conic connection is locally induced from a principal connection on the associated
$G$-structure.

The basic invariants of a principal connection on a $G$-structure are of course its
curvature and torsion. Thus one might hope to obtain information on the cubic torsion
of a characteristic conic connection $\F^\gamma$ induced by a principal connection
$\gamma$ from the torsion and curvature of $\gamma$. Now for an isotrivial cone
structure $\cC\subset \mathbb PTM$ with corresponding $G$-structure $\cP\to M$, the
bundle $\cC\to M$ is an associated fiber bundle to $\cP$, but the tangent
space $T\cC$ is not easily accessible in this picture. Hence, in general, it is not
clear to what extent invariants of $\F^\gamma$ can be expressed in terms of
invariants of $\gamma$. The situation is much better, if one in addition assumes that
the modeling cone $Z\subset\mathbb PW$ is homogenous, i.e.\
$\textrm{Aut}(\widehat Z)$ acts transitively on $Z$. In this case, we derive in
Corollary \ref{cor_cubic_tor_to_curv} a formula for the cubic torsion of $\F^\gamma$
in terms of the torsion and curvature of $\gamma$. This leads to a characterization
for vanishing of the cubic torsion in the spirit of the results on the
characteristic torsion in \cite{Hwang-Li-2}. More specifically, we show in Theorem
\ref{thm_cubic_torsion_zero} that, under a certain condition on the torsion of
$\gamma$, the vanishing of the cubic torsion of $\F^\gamma$ is equivalent to the
curvature of $\gamma$ having values in a specific subspace, which can be described in
terms of the projective geometry of $Z\subset \mathbb PW$.

Section 4 applies our results to isotrivial cone structures, where the
homogeneous model cone $Z$ is a subadjoint variety as in Definition \ref{def_subadjoint}. For the real smooth version of
this case, the associated $G$-structures have been studied under the name parabolic
almost conformally symplectic structures (or PACS structures) by the first author and
Sala\v c in \cite{Cap_Salac}. They showed in particular that any PACS-structure
admits a canonical connection whose torsion satisfies a normalization
condition. For cone structure of subadjoint type $\neq B_3$ we show in Theorem
\ref{thm_conic_conn_subadjoint} that the existence of a conic connection with
vanishing characteristic torsion is equivalent to the existence of a torsion-free
connection on the associated PACS structure. We also prove that the cubic torsion of
any characteristic conic connection on these cone structures necessarily vanishes.

The canonical connections of PACS structures of the relevant types with vanishing
torsion turn out to be special symplectic connections in the sense of Cahen and
Schwachh\"ofer \cite{Cahen-Schwachhoefer}, see \cite{Cap_Salac}. Hence they realize
the special symplectic holonomies in the classification of irreducible holonomies of
torsion-free affine connections by Merkulov and Schwachh\"ofer \cite{MS}. Existence
of these connections, together with our aforementioned results, implies that there
are cone structures of subadjoint type admitting characteristic conic connections
with vanishing cubic torsion that are not locally flat. This is remarkable, since for
VMRT-structure of subadjoint type $\neq G_2$ it has been shown in \cite{Hwang-Li-1}
that they are locally flat. In contrast to the isotrivial cone structures considered
in \cite{H-N}, the condition of being a VMRT-structure for cone structure of
subadjoint type is thus more restrictive than admitting a conic connection with
vanishing characteristic and cubic torsion.

\smallskip \textbf{Acknowledgements} This article has been motivated by the joint
work \cite{H-N} of Jun-Muk Hwang and the second author. We would like to thank
Jun-Muk very much for many helpful discussions and comments, and the second author
also in particular for introducing her to the theory of VMRT-structures and many
inspiring conversations about them and related structures in recent years.  The
authors would also like to thank the Isaac Newton Institute for Mathematical
Sciences, Cambridge, for support and hospitality during the programme Twistor theory,
where the work on this paper has been initiated. This work was supported by EPSRC
grant EP/Z000580/1. The second author also acknowledges support by the grants
GA22-00091S from the Czech Science Foundation (GA\v CR) and MUNI/R/1435/2024 from the
grant agency of Masaryk University (GAMU). This article is based upon work from COST
Action CaLISTA CA21109 supported by COST (European Cooperation in Science and
Technology) https://www.cost.eu.

\section{Cone structures}\label{2}
We start by recalling some basics on cone structures and conic connections. 
We will work throughout this article in the holomorphic category, i.e.\,manifolds
will be complex and maps between them holomorphic. Moreover, for a holomorphic vector
bundle $\mathcal E$ over a complex manifold $M$ we denote by $\mathcal O(\mathcal E)$
the sheaf of local holomorphic sections.

\subsection{Cone structures and their tangential filtrations}\label{2.1}
For a complex vector space $W$, we denote by $\P W$ its projectivization and by
$\tau:W\setminus \{0\}\rightarrow \P W$ the natural projection.  For a subset
$Z\subset \P W$ we write
$$
\widehat Z:=\tau^{-1}(Z)\cup \{0\}\subset W,
$$
for the \emph{affine cone }over $Z$. In particular, if $z\in \P W$ is a point, $\hat z$
is the $1$-dimensional subspace in $W$ corresponding to $z$.
Recall that for a complex submanifold  $Z\subset \P W$ its \emph{affine tangent
  space} at a point $z_0\in Z$ is given by
$$
\wh{T}_{z_0}Z:= T_w\wh{Z}\subset W,
$$
where $w\in \hat z_0$ is any non-zero vector in $\hat z_0$. Note that this is
well-defined as the tangent spaces of $\wh Z$ along non-zero points in $\hat z_0$
coincide. The usual (intrinsic) tangent space at
$z_0$ is then related to the affine tangent space by the canonical isomorphism
$
T_{z_0}Z\cong \hat z_0^*\otimes \wh T_{z_0} Z/\hat z_0.
$

\begin{defin}\label{def_cone_str} 
Suppose $M$ is a complex manifold and write $\P TM$ for the projectivized (complex) tangent bundle of
$M$, whose fiber at $x\in M$ is the complex projective space $\P T_xM$.

\begin{enumerate}
\item A cone structure on $M$ is a closed submanifold $\mathcal C\subset \P TM$ such
  that the natural projection $p:\C\rightarrow M$ is a surjective submersion. So each
  fiber $p^{-1}(x)=\C_x$ is a closed submanifold of the projective space $\P T_xM$.
\item Given a cone structure $\mathcal C\subset \P TM$ on $M$, we denote by
  $\cV\cC\subset T\C$ the vertical bundle of the projection $p:\C\rightarrow M$.
\item Suppose that $W$ is a complex vector space of the same dimension as $M$ and
  that $Z\subset\P W$ is a closed submanifold. Then a cone structure $\C\subset\P TM$
  is called \textit{$Z$-isotrivial}, if $\C_x\subset \mathbb P T_xM$ is projectively
  isomorphic to $Z\subset \mathbb P W$ for all $x\in M$, that is, there is a linear isomorphism $T_xM\rightarrow W$ mapping $\widehat \C_x$
to $\widehat Z$ for all $x\in M$. 
\end{enumerate}
\end{defin}

Any cone structure $\C\subset \mathbb PTM$ admits a natural filtration of its tangent bundle by vector subbundles
\begin{equation}\label{natural_distributions_C}
T^{-1}\C\subset T^{-2}\C \subset T\C,
\end{equation}
where the fibers over a point $y\in\C$ are $T_y^{-1}\C:=(T_yp)^{-1}(\hat y)$ and $T_y^{-2}\C:=(T_yp)^{-1}(\wh T_y \C_{p(y)})$ respectively.
By construction, $\cV\cC$ is contained in $T^{-1}\C$ and of corank $1$ in $T^{-1}\C$. Hence, if $k$ is the dimension of the fibers of $p:\C\rightarrow M$, then the rank of $\cV\cC$ is $k$, the rank of $T^{-1}\C$ is $k+1$ 
and the rank of $T^{-2}\C$ is $2k+1$. Moreover, we have:

\begin{prop}\label{prop_filtration_C1} 
Suppose $\C\subset \mathbb PTM$ is a cone structure on a complex manifold. Then the subbundle $T^{-2}\C\subset T\C$ satisfies
$$\cO(T^{-2}\C)=[\cO(T^{-1}\C),\cO(T^{-1}\C)]+\cO(T^{-1}\C)=[\cO(\cV\cC),\cO(T^{-1}\C)]+\cO(T^{-1}\C).$$ 
\end{prop}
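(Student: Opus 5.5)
The proof will be local and reduce to computations in adapted coordinates. First I establish the inclusions
\[
[\cO(\cV\cC),\cO(T^{-1}\C)]+\cO(T^{-1}\C)\subset[\cO(T^{-1}\C),\cO(T^{-1}\C)]+\cO(T^{-1}\C)\subset \cO(T^{-2}\C).
\]
The first is trivial. For the second, I will use the tautological description of $T^{-1}\C$ and $T^{-2}\C$ via the tautological $1$-form. Consider $\P TM$ with its tautological line bundle $\cL\subset p^*TM$, whose fiber over $y$ is $\hat y$. Let $\theta: T\P TM\to p^*TM/\cL$ be the projection of $Tp$. Then $T^{-1}\C=\ker(\theta|_{T\C})$. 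For sections $\xi,\eta$ of $T^{-1}\C$, the standard formula gives
\[
\theta([\xi,\eta])=-d\theta(\xi,\eta)
\]
once $\theta$ is differentiated via a local trivialization. I will make this concrete in local coordinates rather than intrinsically.

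Concretely, I work near a point $y_0\in\C$ with $x_0=p(y_0)$. I choose local coordinates on $M$, so that $TM\cong U\times\mathbb C^n$. Near $y_0$, I choose a local holomorphic map $s:\C\to\mathbb C^n\setminus\{0\}$ with $[s(y)]=y$; that is, $s$ is a nonvanishing local section of $\cL$. Then a vector $\xi\in T_y\C$ lies in $T^{-1}\C$ iff $Tp\cdot\xi\in\hat y$. Every local section of $T^{-1}\C$ can thus be written as $\xi=f\,\tilde s+v$, where $v$ is vertical, $f$ is a function, and $\tilde s$ is some fixed local section of $T^{-1}\C$ with $Tp\cdot\tilde s=s$. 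For a vertical field $v$, we have $Tp\,[v,\tilde s]=v\cdot s$, the derivative of $s$ along the fiber, because $Tp\,\tilde s=s\circ$ and $v$ is $p$-related to zero. Restricted to a fiber $\C_x$, the map $s$ parametrizes $\widehat{\C_x}$. Hence the values $v\cdot s(y)$, for $v$ ranging over $\cV_y\cC$, together with $s(y)$, span exactly $\wh T_y\C_x$. This is the key step.

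It gives $Tp[\xi,\eta]\in\wh T_y\C_x$ for all $\xi,\eta\in\cO(T^{-1}\C)$. Indeed, expanding with the Leibniz rule, brackets of two vertical fields are vertical, $[\tilde s,\tilde s]=0$, the mixed term $[v,\tilde s]$ maps into $\wh T_y\C_x$, and the function-derivative terms lie in $\cO(T^{-1}\C)$. So the second inclusion holds.

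For the reverse inclusion $\cO(T^{-2}\C)\subset[\cO(\cV\cC),\cO(T^{-1}\C)]+\cO(T^{-1}\C)$, I compare ranks pointwise. The map
\[
\cV_y\cC\to T_y^{-2}\C/T^{-1}_y\C\cong \wh T_y\C_x/\hat y,\qquad v\mapsto [v,\tilde s](y) \bmod T^{-1}_y\C,
\]
is tensorial in $v$. Modulo $\hat y$ it equals $v\cdot s$, which is the identification $T_y\C_x\cong\hat y^*\otimes\wh T_y\C_x/\hat y$ twisted by $s(y)$, and hence it is an isomorphism. So the local sections $[v_i,\tilde s]$, for a local frame $v_i$ of $\cV\cC$, together with a frame of $T^{-1}\C$, form a local frame of $T^{-2}\C$. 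Every local section of $T^{-2}\C$ is then an $\cO$-combination of these, and $g[v_i,\tilde s]=[v_i,g\tilde s]-(v_i g)\tilde s$, which lies in the desired sheaf. I expect the main obstacle to be this identification of $v\cdot s$ with the intrinsic tangent map and its injectivity. Once one observes that $s|_{\C_x}$ is a local section of $\widehat{\C_x}\setminus\{0\}\to\C_x$, this is exactly the definition of the affine tangent space.
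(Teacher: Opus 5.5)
Your argument is correct, but it takes a different route from the paper. The paper does not prove the first identity itself: it cites Proposition 1 of Hwang--Mok. It then gets the second identity from the observation that $\cV\cC$ has corank $1$ in $T^{-1}\C$, so that modulo $\cO(T^{-1}\C)$ brackets of two sections of $T^{-1}\C$ reduce to brackets of vertical fields with one fixed transversal section.

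You give a self-contained local proof instead. You lift a nonvanishing local section $s$ of the tautological line to $\tilde s$ and compute $Tp[v,\tilde s]=v\cdot s$ for vertical $v$. You then use that $s|_{\C_x}$ is a section of $\wh{\C_x}\setminus\{0\}\to\C_x$ to see that $v\mapsto v\cdot s \bmod \hat y$ is an isomorphism $\cV_y\cC\to\wh T_y\C_x/\hat y$. This closes the cyclic chain
$[\cO(\cV\cC),\cO(T^{-1}\C)]+\cO(T^{-1}\C)\subset[\cO(T^{-1}\C),\cO(T^{-1}\C)]+\cO(T^{-1}\C)\subset\cO(T^{-2}\C)\subset[\cO(\cV\cC),\cO(T^{-1}\C)]+\cO(T^{-1}\C)$,
so both equalities come out at once. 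In the paper, by contrast, the second is deduced from the first. The individual steps check out:
\begin{itemize}
\item the decomposition $\xi=f\tilde s+v$;
\item the tensoriality of $v\mapsto[v,\tilde s]\bmod T^{-1}\C$;
\item the frame argument;
\item the identity $g[v_i,\tilde s]=[v_i,g\tilde s]-(v_ig)\tilde s$.
\end{itemize}

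The citation buys brevity. Your version is elementary and gives an explicit local holomorphic frame of $T^{-2}\C$ as a by-product. It also shows directly that $v\mapsto[v,\tilde s]\bmod T^{-1}\C$ is an isomorphism $\cV\cC\to\textrm{gr}_{-2}(T\C)$ for any local section $\tilde s$ of $T^{-1}\C$ transverse to $\cV\cC$. That is exactly the Levi bracket isomorphism $\F\otimes\cV\cC\cong\textrm{gr}_{-2}(T\C)$, which the paper later derives from this proposition by surjectivity plus a dimension count.

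Your opening remark about $\theta([\xi,\eta])=-d\theta(\xi,\eta)$ is not needed; for a bundle-valued $\theta$ it only makes sense after a trivialization. The coordinate computation $[v,\tilde s]\cdot(x^i\circ p)=v\cdot s^i$ is what actually carries the proof.
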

\begin{proof} The first identity was verified in Proposition 1 of \cite{Hwang-Mok04} and the second is evident, since $\cV\cC$ is of co-rank $1$ in $T^{-1}\C$.
\end{proof}

Proposition \ref{prop_filtration_C1} says in particular that $\C$ equipped with the filtration \eqref{natural_distributions_C} is a filtered manifold, that is, the filtration is compatible with the 
Lie bracket of vector fields in the sense that $[T^{-i}\C, T^{-j}\C]\subset T^{-i-j}\C$ for all $ i,j>0$, with the convention that $T^{-i}\C=T\C$ for $i\geq 3$. This implies that the Lie bracket of vector fields induces a vector bundle map
$\mathcal L: \textrm{gr}(T\C)\otimes\textrm{gr}(T\C)\rightarrow \textrm{gr}(T\C)$, called the \emph{Levi bracket},
on the associated graded bundle 
\begin{equation*}
\textrm{gr}(T\C):=\textrm{gr}_{-1}(T\C)\oplus\textrm{gr}_{-2}(T\C)\oplus \textrm{gr}_{-3}(T\C),
\end{equation*}
of the filtered bundle \eqref{natural_distributions_C},
where $\textrm{gr}_{-i}(T\C):=T^{-i}\C/T^{-i+1}\C$  with the convention that $T^{0}\C:=\{0\}$. Denoting by $q_{-i}: T^{-i}\C\rightarrow \textrm{gr}_{-i}(T\C)$ the natural projection,
 \begin{equation}\label{e.Levi}
\mathcal L: \textrm{gr}_{-i}(T\C)\otimes\textrm{gr}_{-j}(T\C)\rightarrow\textrm{gr}_{-i-j}(T\C),
\end{equation}
is defined by $\mathcal L(q_{-i}(\xi), q_{-j}(\eta))=q_{-i-j}([\xi,\eta])$ for local
sections $\xi$ and $\eta$ of $T^{-i}\C$ and $T^{-j}\C$ respectively. 

\begin{rem}\label{comparison_filt_rem}
  In general there exists two natural ways to continue the tangential filtration
  \eqref{natural_distributions_C}, but, as these extensions will not really play a
  role in this paper, we discuss them only briefly. Recall that for a submanifold
  $Z\subset \mathbb PW$, any choice of point $z_0\in Z$ induces a filtration
  $$W^{-1}\subset W^{-2}\subset...\subset W^{-r}\subset W,$$
  by linear subspaces, where $W^{-1}=\hat z_0$ and $W^{-2}=\wh T_{z_0}Z$. This is
  called the osculating filtration, which is induced by the projective fundamental
  forms of $Z\subset \mathbb P W$, see Section 2.1 of \cite{LM} for a
  definition. Therefore, the tangent space $T_y\C$ at $y\in\C$ of a cone structure
  $\C\subset \P TM$ admits a natural filtration by vector spaces
 \begin{equation}\label{filtration_C1}
\D_y^{-1}\subset \D_y^{-2}\subset...\subset \D _y^{-r}\subset T_y\mathcal C,
\end{equation}
given by the preimages under $T_yp$ of the osculating filtration of the subvariety
$\C_{p(y)}\subset \P T_{p(y)}M$ on $T_{p(y)}M$.  By definition $\D_y^{-1}=T_y^{-1}\C$
and $\D_y^{-2}=T_y^{-2}\C$, and if $\C_{p(y)}\subset \P T_{p(y)}M$ is
linearly-nondegenerate $\D _y^{-r}=T_y\mathcal C$.  At least on a dense open set of
$\C$, the filtration \eqref{filtration_C1} also gives rise to a filtration of $T\C$
by vector subbundles, extending the filtration \eqref{natural_distributions_C}.

There is a second natural tangential filtration on $\C$ given by considering the
weak-derived flag of the distribution $T^{-1}\C$. By its definition and Proposition
\ref{prop_filtration_C1}, the next subbundlle in that filtration is $T^{-2}\C$ and at
least, on an open dense subset of $\C$, it gives rise to a filtration of $T\C$ by
vector subbundles of the form
\begin{equation}\label{filtration_C2}
T^{-1}\C\subset T^{-2}\C\subset....\subset T^{-r'}\C\subset T\C.
\end{equation}
The filtration \eqref{filtration_C1} and \eqref{filtration_C2} in general do not
coincide: for $i\geq 3$ it is not difficult to prove that in general one only has
$\D^{-i}\subseteq T^{-i}\C$. This nevertheless implies that, if
$\C_{x}\subset \P T_{x}M$ is linearly non-degenerate for a generic point $x\in M$,
then $\D^{-i}=T^{-i}\C=T\C$ for sufficiently large $i$. So $T^{-1}\C$ is
bracket-generating in this case.  The question when the tangential filtrations
\eqref{filtration_C1} and \eqref{filtration_C2} on $\C$ coincide turns out to be
closely related to the existence of certain types of conic connections, which we
discuss in Section \ref{sec_conic_conn}, see also \cite{H-N}.
\end{rem}

\subsection{Conic connections}\label{sec_conic_conn}
\begin{defin}\label{def_conic_conn} 
  A \emph{conic connection} on a cone structure $\C\subset\P TM$ over a manifold $M$
  is a line subbundle $\F\subset T^{-1}\C$ such that $T^{-1}\C=\cV\cC\oplus \F$.
\end{defin}

Given $\F$, the projections of the integral curves of $\F\subset T\C$ to $M$ give
rise to a family of unparametrized holomorphic curves on $M$, with exactly one curve
through each point $x\in M$ in each direction belonging to $\C_x$. Conversely, any
such family lifting to a holomorphic foliation of rank $1$ on $\C$ defines a conic
connection on $\C$. If $\C=\P TM$, a conic connection is also called a \emph{path
  geometry} in the literature.

Given a conic connection $\F$, the involutivity of $\F$ and $\cV\cC$, and Proposition \ref{prop_filtration_C1} imply that the component  
\begin{equation}\label{Levi_iso}
\mathcal L: \F\otimes\cV\cC\rightarrow \textrm{gr}_{-2}(T\C)
\end{equation}
of the Levi bracket \eqref{e.Levi} is a surjection. By dimensional
reason, it thus has to be an isomorphism of vector bundles.  A significant geometric
property of a conic connection is the following \cite{HwangICM, H-N, Hwang-Li-1}:

\begin{defin} A conic connection $\F$ on a cone structure $\C\subset\P TM$ is called
  \emph{characteristic}, if
\begin{equation}
[\cO(\F), \cO(T^{-2}\C)]\subset \cO(T^{-2}\C).
\end{equation} 
This is equivalent to vanishing of the component
$\F\otimes\textrm{gr}_{-2}(T\C)\rightarrow T\C/T^{-2}\C$ of the Levi bracket
\eqref{e.Levi}, whose negative is called the \emph{characteristic torsion} of $\F$.
 \end{defin} 

In many cases, if a characteristic conic connection exists, it is known to be
unique:
 \begin{prop}\label{uniqueness_of_charc_conn}
 Suppose $\C\subset\P TM$ is a cone structure.
 \begin{enumerate}
 \item If the map
   $\cL: \cV\cC\rightarrow \emph{Hom}(\emph{gr}_{-2}(T\C), T\C/T^{-2}\C)$, induced by
   the corresponding component of the Levi bracket \eqref{e.Levi}, is injective at
   general points of $\C$, then there exists at most one characteristic conic
   connection.
 \item If each component of $\C_x\subset \P T_xM$ is different from a projective
   subspace for any $x\in M$, then the assumption in (1) is satisfied. In particular,
   $\C$ admits at most one characteristic conic connection.
 \end{enumerate}
 \end{prop}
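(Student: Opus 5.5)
For (1), I would compare two characteristic conic connections $\F$ and $\F'$ directly. Since both are complements to $\cV\cC$ in $T^{-1}\C$, locally we can choose a nonvanishing section $\xi$ of $\F$ and write a section of $\F'$ as $\xi'=\xi+v$ with $v$ a section of $\cV\cC$. Both connections are characteristic, so for any section $\eta$ of $T^{-2}\C$ the brackets $[\xi,\eta]$ and $[\xi',\eta]$ lie in $T^{-2}\C$. Subtracting gives $[v,\eta]\in\cO(T^{-2}\C)$ for all such $\eta$.

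By the definition \eqref{e.Levi} of the Levi bracket, this says exactly that $\cL(v,\cdot)$, viewed as a map $\mathrm{gr}_{-2}(T\C)\to T\C/T^{-2}\C$, vanishes at each point. A routine check with the Leibniz rule shows this map is tensorial: changing $\eta$ by $f\eta$ or by a section of $T^{-1}\C$ only contributes terms in $T^{-2}\C$, using Proposition \ref{prop_filtration_C1}. So injectivity of $v\mapsto\cL(v,\cdot)$ at general points forces $v=0$ on a dense open subset. By continuity $v=0$ everywhere, hence $\F=\F'$.

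For (2), the statement is pointwise, so I fix $x$ and $y\in\C_x$ and translate into projective geometry of $Z=\C_x\subset\P W$, $W=T_xM$. Using $T_y^{-2}\C/T_y^{-1}\C\cong \wh T_yZ/\hat y$ and $T_y\C/T_y^{-2}\C\cong W/\wh T_yZ$, I would identify the map $\cL(v,\cdot)$ for $v\in\cV_y\cC\cong T_yZ$ with the second fundamental form
\[
II_y(v,\cdot): T_yZ\to \hat y^*\otimes W/\wh T_yZ,
\]
up to the twist by $\hat y$. This is the main step. To carry it out, I would take local coordinates, represent $T^{-2}\C$-sections as lifts of vector fields of the form $w(z)$ with $w(z)\in\wh T_zZ$ along the fiber, and compute the vertical derivative of $w$ in direction $v$ modulo $\wh T_yZ$. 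That derivative is precisely the second fundamental form.

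Injectivity then means that for each $v\neq0$ in $T_yZ$ some $u$ satisfies $II_y(v,u)\neq0$. If the map failed to be injective on an open set of $Z$, we would get a nonzero field $v$ in the kernel of the second fundamental form there. The standard Gauss-map argument then shows that $Z$ is ruled by linear spaces along which the tangent space is constant, i.e. $Z$ is degenerate. To exclude this, I would invoke the classical fact (Griffiths–Harris / Zak: the Gauss map of a smooth projective variety that is not a linear space is finite) that for a smooth closed submanifold of projective space, no component of which is linear, the second fundamental form has trivial kernel at general points. Applying this to each component of $\C_x$ gives the hypothesis of (1) at general points of $\C$.

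I expect this application of Zak's theorem on the finiteness of the Gauss map to be the main obstacle, while identifying the Levi bracket with $II$ is routine.
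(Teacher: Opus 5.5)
Your proposal is correct. The paper gives no argument of its own here and only cites Proposition 1.21 and Lemma 1.22 of \cite{H-N}; your sketch supplies the standard proof along exactly those two steps. Two characteristic conic connections differ by a vertical field $v$ with $\cL(v,\cdot)=0$. At $y\in\C_x$ this component of the Levi bracket is the projective second fundamental form (equivalently, the differential of the Gauss map) of $\C_x\subset\P T_xM$. Its generic injectivity on smooth non-linear components follows from Zak's finiteness of the Gauss map together with generic smoothness in characteristic zero.

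The one step you leave implicit is harmless. The locus where $\cL$ fails to be injective is an analytic subset of $\C$ that meets every fiber $\C_x$ in a nowhere dense set, so it is nowhere dense in $\C$, which is what (1) requires.
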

 \begin{proof}
 For a proof see Proposition 1.21 and Lemma 1.22 of \cite{H-N} and the references therein. 
 \end{proof}
 
 For a conic connection with vanishing characteristic torsion there is an important
 local invariant, called the \emph{cubic torsion}, which was introduced in Section
 1.4 of \cite{H-N}. This is a section $\chi_{\F}$ of the bundle
 $S^3\F^*\otimes\textrm{Hom}_0(\cV\cC, \textrm{gr}_{-2}(T\C))$, where the subscript
 $0$ denotes trace-freeness with respect to the isomorphism
 $\cL: \F\otimes\cV\cC\cong \textrm{gr}_{-2}(T\C)$. Since the definition of
 $\chi_{\F}$ is rather technical and we will obtain an alternative derivation below,
 we do not discuss this definition from \cite{H-N} here. Proposition 3.5 of
 \cite{H-N} provides many examples of cone structures equipped with conic connections
 whose characteristic and cubic torsion vanishes, which arise naturally in the study
 of uniruled projective varieties. It was also shown in Theorem 4.41 of \cite{H-N},
 that there are many examples of isotrivial cone structures, arising in the context
 of parabolic geometries, for which the existence of a conic connection with
 vanishing characteristic and cubic torsion already determines the local isomorphism
 type of the cone structure.
 
 \subsection{Structures associated to a local holomorphic section of a characteristic
   conic connection $\F$.}\label{2.3}  
 Suppose $\C\subset\P TM$ is a cone structure on a complex manifold $M$ equipped with a conic
 connection $\F\subset T^{-1}\C$ so that $T^{-1}\C=\F\oplus\cV\C$.  Then we have the
 following commutative diagram with exact rows and columns:
\begin{equation}\label{com_exact_diagram}
\begin{array}{ccccccccc}
   &     &      0        &    &    0          & &  & & \\
  &     &      \downarrow  &     &    \downarrow     & & & & \\
  0 & \rightarrow & \F  & = & \F & \rightarrow&  0  &  \\
 &     &      \downarrow   &     &    \downarrow &   & \downarrow      & & \\
 0 &   \rightarrow  &    \cV\cC\oplus\F  &   \to  &   T^{-2} \C  &   \xrightarrow{q_{-2}}    & \textrm{gr}_{-2} (T\C) & \to& 0\\
  &     &      \downarrow         &     &      \downarrow       &    & \Vert & & \\
0 &  \to   &      \cV\cC      &  \rightarrow   &   T^{-2} \C/\F          &  \xrightarrow{q_{-2}}     &\textrm{gr}_{-2} (T\C) &\to &0 \\
 &     &       \downarrow     &     &    \downarrow         & &\downarrow & & \\
  &     &      0        &     &     0         & & 0& & \\
\end{array}.
\end{equation}
Recall that any short exact sequence of holomorphic vector bundles locally admits
holomorphic splittings and there are two equivalent descriptions of a splitting. For
example, a splitting of the middle row of \eqref{com_exact_diagram} is given by a
bundle map $\sigma:\textrm{gr}_{-2}(T\C)\to T^{-2} \C$ such that
$q_{-2}\circ\sigma=\textrm{Id}$. Equivalently, it can be viewed as a projection
\begin{equation}\label{notation_split}
\pi=\pi^{\cV\cC}+\pi^{\F}: T^{-2}\C\rightarrow\cV\cC\oplus\F 
\end{equation}
that restricts to the identity on $\cV\cC \oplus\F$. Given such a projection $\pi$,
$q_{-2}$ restricts to an isomorphism 
$\ker(\pi)\cong \textrm{gr}_{-2} (T\C)$ and we get the following identifications
\begin{equation}\label{direct_sum}
T^{-2}\C\cong\ker(\pi)\oplus\cV\cC \oplus\F\cong \textrm{gr}_{-2}(T\C)\oplus\cV\cC \oplus\F.
\end{equation}
Evidently, we get also an induced splitting of the bottom row
\begin{equation*}
T^{-2}\C/\F\cong\ker(\pi)\oplus\cV\cC \cong \textrm{gr}_{-2}(T\C)\oplus\cV\cC .
\end{equation*}

For a characteristic conic connection $\F$, we next associate several data to a
choice of a local nowhere vanishing section $f$ of $\F$, i.e.\ a local
trivialization of $\F$. These are analogous to several components of Weyl
structures for path geometries, see \cite{Cap-Guo}, and Chapter 5 of \cite{csbook} for
the general concept for parabolic geometries. The resulting data include a local
splitting as in \eqref{notation_split}, as well as a partial linear connection on the vector bundle $\cV\C$. 
Since partial connections will in general play a crucial
role in this article, we recall  here the definition for the convenience of the reader.
\begin{defin}\label{def_partial_conn}
 Let $\mathcal E\to M$ be a (holomorphic) vector bundle over a complex manifold $M$ and 
 $\mathcal H\subset TM$ be a (holomorphic) distribution on $M$. Then a (holomorphic) \textit{partial
 $\mathcal H$-connection on $\mathcal E$} is a $\Bbb C$-bilinear map $\nabla: \cO(\mathcal
  H)\times \cO(\mathcal E)\to\cO(\mathcal E)$ which has the defining properties of a linear
  connection. Explicitly, this means that for local sections $\xi$ of $\mathcal
  H$, $s$ of $\mathcal E$ and a local holomorphic function $h$ one has
\begin{enumerate}
\item $\nabla_{h\xi}s=h\nabla_\xi s$,
\item $\nabla_\xi h s=h\nabla_\xi s+dh(\xi)s$.
\end{enumerate}
If the subbundle $\mathcal H$ is clear from the context, we will refer to such a $\nabla$ just as a
partial connection.
\end{defin}

\begin{prop}\label{prop_splitting_f} 
  Suppose $\C\subset\P TM$ is a cone structure on a manifold $M$ equipped with a
  characteristic conic connection $\F$. Then any choice of local nowhere vanishing
  section $f$ of $\F$ induces the following data on the domain of definition of $f$:
\begin{enumerate}
\item A vector bundle isomorphism $\phi_f=\mathcal L(f,_-):\cV\cC
\rightarrow\emph{gr}_{-2}(T\C)$.
\item A partial $\F$-connection $\nabla^f:\cO(\cV\cC )\rightarrow\cO(\F^*\otimes\cV\cC)$ on $\cV\cC $ characterized by 
\begin{equation}\label{def_nabla^f}
\mathcal L(f, \nabla_f^fv)=\frac{1}{2}q_{-2}([f,[f,v]]).
\end{equation}
\item A partial $\F$-connection
$\nabla^f:\cO(\emph{gr}_{-2}(T\C))\rightarrow\cO(\F^*\otimes(\emph{gr}_{-2}(T\C))$
on $\emph{gr}_{-2}(T\C)$ obtained from (2) by requiring compatibility with $\phi_f$.
\item A splitting $\pi_f=\pi_f^{\cV\cC} +\pi_f^\F:T^{-2}\C\rightarrow
T^{-1}\C=\cV\cC\oplus\F $ of the middle row of \eqref{com_exact_diagram} given by
\begin{equation}\label{splitting_f}
\pi_f(\xi)=\xi-[f,v_{q_{-2}(\xi)}]+\nabla_f^{f}v_{q_{-2}(\xi)},
\end{equation}
where $v_{q_{-2}(\xi)}=\phi_f^{-1}(q_{-2}(\xi))$ for a local section  $\xi$ of $T^{-2}\C$. 
In particular, 
$\pi_f([f,v])=\pi_f^{\cV\cC} ([f,v])=\nabla^f_f v$ for a local section $v$ of $\cV\cC$.
\end{enumerate}
\end{prop}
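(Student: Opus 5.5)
The plan is to verify each of the four items in turn. At every step I check tensoriality or the Leibniz rule, and I use the characteristic property to make sure that all the objects are well defined.

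For (1), I would restrict the isomorphism \eqref{Levi_iso} $\cL:\F\otimes\cV\cC\to \textrm{gr}_{-2}(T\C)$ to $f\otimes\cV\cC$. Since $f$ is nowhere vanishing, $\phi_f=\cL(f,\_)$ is a vector bundle isomorphism. It is tensorial because $\cL$ is.

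For (2), the first point is that the right-hand side of \eqref{def_nabla^f} is well defined. The bracket $[f,v]$ is a section of $T^{-2}\C$ by Proposition \ref{prop_filtration_C1}. Since $\F$ is characteristic, $[f,[f,v]]$ is again a section of $T^{-2}\C$, so $q_{-2}$ can be applied to it. Because $\phi_f$ is invertible, \eqref{def_nabla^f} then determines $\nabla^f_fv$ uniquely. I would extend the definition to all of $\cO(\F)$ by setting $\nabla^f_{hf}:=h\nabla^f_f$, which gives property (1) of Definition \ref{def_partial_conn} by construction. For the Leibniz rule I would compute
$$[f,[f,hv]]=h[f,[f,v]]+2(f\cdot h)[f,v]+(f\cdot f\cdot h)v.$$
The last term lies in $\cV\cC\subset T^{-1}\C$, so $q_{-2}$ kills it. Also $q_{-2}([f,v])=\cL(f,v)$. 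Hence
$$\tfrac12 q_{-2}([f,[f,hv]])=\cL\big(f,h\nabla^f_fv+(f\cdot h)v\big).$$
This is exactly $\nabla^f_f(hv)=h\nabla^f_fv+dh(f)v$, and the factor $\tfrac12$ is what makes the coefficient of $dh(f)v$ equal to one. This Leibniz check is the only computation that needs care.

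For (3), I would simply transport the connection via $\phi_f$ by setting $\nabla^f_f s:=\phi_f(\nabla^f_f\phi_f^{-1}s)$. This is again a partial connection, since $\phi_f$ is a tensorial isomorphism.

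For (4), I would interpret \eqref{splitting_f} using a local section $v$ of $\cV\cC$ whose value is $v_{q_{-2}(\xi)}$. One needs $\pi_f$ to be tensorial in $\xi$, to take values in $T^{-1}\C$, and to be the identity on $T^{-1}\C$.
\begin{enumerate}
\item \emph{Values in $T^{-1}\C$.} By (1), $q_{-2}([f,v])=\cL(f,v)=q_{-2}(\xi)$, so $\xi-[f,v]$ lies in $T^{-1}\C$. The term $\nabla^f_fv$ lies in $\cV\cC$.
\item \emph{Identity on $T^{-1}\C$.} If $\xi\in T^{-1}\C$, then $q_{-2}(\xi)=0$, so $v=0$ and $\pi_f(\xi)=\xi$.
\item \emph{Tensoriality.} Replacing $v$ by $hv$ changes $-[f,v]+\nabla^f_fv$ by $-(f\cdot h)v+(f\cdot h)v=0$. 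So the map $v\mapsto -[f,v]+\nabla^f_fv$ is tensorial, and $\pi_f$ is a well-defined bundle map.
\end{enumerate}
Finally, for $\xi=[f,v]$ the formula gives $\pi_f([f,v])=\nabla^f_fv\in\cV\cC$, which is the last claim of (4).
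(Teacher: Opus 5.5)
Your proposal is correct and follows the paper's proof essentially step by step. There are two small differences. In (2) you carry out the Leibniz computation explicitly, where the paper only cites Lemma 1.26 of \cite{H-N}. In (4) you check $\cO$-linearity of $v\mapsto -[f,v]+\nabla^f_fv$ instead of the paper's $\pi_f(h\xi)=h\pi_f(\xi)$, but this is the same $dh(f)v$ cancellation.
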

\begin{proof}
(1) We have already observed in \eqref{Levi_iso} that the Levi bracket induces an
  isomorphism $\phi_f$ as required.

(2) Since $\F$ is characteristic, for $v\in\cO(\cV\cC)$ we get
  $[f,[f,v]]\in\cO(T^{-2}\C)$. So, by (1), there is a unique section $\nabla^f_f
  v\in\cO(\cV\cC)$ such that \eqref{def_nabla^f} holds. Any local section of $\F$
  is of the form $hf$ for some locally defined holomorphic function $h$ on $\C$ and
  we define
\begin{equation}\label{def_nabla^f_2}
\nabla^f_{hf}v:=h\nabla_f^fv. 
\end{equation}
To prove that we have defined a partial connection, it suffices to verify the Leibniz
rule in the second argument, which is a simple direct computation, see (1) of
Lemma 1.26 of \cite{H-N}, where $\nabla_f^fv$ is denoted by $w(f,v)$. 

(3) is obvious. 

(4) For a local section $\xi$ of $T^{-2}\C$ put
$v_{q_{-2}(\xi)}:=\phi_f^{-1}(q_{-2}(\xi))\in\cO(\cV\cC)$. Then $\pi_f(\xi)$ as
defined in \eqref{splitting_f} is a section of $T^{-1}\C$, since one obviously has $q_{-2}(\pi_f(\xi))=q_{-2}(\xi)-\phi_f(v_{q_{-2}(\xi)})=0$.  Observe also
that for $\xi=[f,v]$, where $v\in\mathcal O(\cV\C)$, we get $q_{-2}(\xi)=\Cal L(f,v)=\phi_f(v)$, which shows that
$\pi_f([f,v])=\nabla_f^{f}v$.  Let us now verify that $\pi_f$ is a vector bundle map:
suppose $h$ is a locally defined holomorphic function on $\C$, then
$v_{q_{-2}(h\xi)}=hv_{q_{-2}(\xi)}$ implies that
\begin{align*}
\pi_f(h\xi)&=h\xi-[f,hv_{q_{-2}(\xi)}]+\nabla^f_fhv_{q_{-2}(\xi)}\\
&=h\xi-h[f,v_{q_{-2}(\xi)}]-dh(f)v_{q_{-2}(\xi)}+h\nabla_f^fv_{q_{-2}(\xi)}+dh(f)v_{q_{-2}(\xi)}\\
&=h\pi_f(\xi),
\end{align*}
which shows that $\pi_f$ defines a vector bundle map. Moreover, obviously,
$\pi_f\vert_\F=\textrm{Id}_{\F}$ and $\pi_f\vert_{\cV\cC}=\textrm{Id}_{\cV\cC }$.
Hence, \eqref{splitting_f} defines a splitting as claimed.
\end{proof}

The following proposition establish how the objects in Proposition \ref{prop_splitting_f} 
change when one changes the local trivialization of $\F$.

\begin{prop}\label{prop_splitting_f_2}
In the setting of Proposition \ref{prop_splitting_f}, suppose $f$ and $\hat f$ are
local nowhere vanishing sections of $\F$, which implies $\hat f=hf$ for some local
nowhere vanishing holomorphic function $h$ on $\C$. Then one has:
\begin{equation}\label{change_phi_f}
\phi_{\hat{f}}=\phi_{hf}=h\phi_f\quad\textrm{ and }\quad\phi_{\hat{f}}^{-1}=\phi_{hf}^{-1}=h^{-1}\phi_f,
\end{equation}
\begin{equation}\label{change_nabla^f}
\nabla^{\hat f}_gv=\nabla^{hf}_gv=\nabla^f_gv+\frac{1}{2h}dh(g)v,
\end{equation}
\begin{equation}\label{change_pi_f}
\pi_{\hat f}^{\cV\cC} (\xi)=\pi_{f}^{\cV\cC} (\xi)+\frac{1}{2h}dh(f)\phi_f^{-1}(q_{-2}(\xi))=\pi_{f}^{\cV\cC}(\xi)+\frac{1}{2h}dh(f)v_{q_{-2}(\xi)},
\end{equation}
for local sections $g\in\cO(\F)$, $v\in\cO(\cV\cC)$, and $\xi\in\cO (T^{-2}\C)$.
\end{prop}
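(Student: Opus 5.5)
The plan is a direct computation from the characterizing properties in Proposition \ref{prop_splitting_f}. We substitute $\hat f=hf$ and expand Lie brackets with the Leibniz rule $[h\xi,\eta]=h[\xi,\eta]-d h(\eta)\xi$. Throughout we discard terms lying in $T^{-1}\C$ whenever we project with $q_{-2}$.

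First, \eqref{change_phi_f}. The Levi bracket $\mathcal L$ is a bundle map, hence tensorial. So $\phi_{hf}(v)=\mathcal L(hf,v)=h\mathcal L(f,v)=h\phi_f(v)$. Inverting gives $\phi_{hf}^{-1}=h^{-1}\phi_f^{-1}$; this is how the second identity in \eqref{change_phi_f} should be read.

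Second, \eqref{change_nabla^f}. For $v\in\cO(\cV\cC)$ we have $[hf,v]=h[f,v]-dh(v)f$, and therefore
\begin{equation*}
[hf,[hf,v]]=h^2[f,[f,v]]+h\,dh(f)[f,v]-[hf,dh(v)f].
\end{equation*}
The last term is a multiple of $f$, so it lies in $\F\subset T^{-1}\C$. Applying $q_{-2}$ then gives
\begin{equation*}
\tfrac12 q_{-2}([hf,[hf,v]])=h^2\mathcal L(f,\nabla^f_fv)+\tfrac12 h\,dh(f)\mathcal L(f,v).
\end{equation*}
By \eqref{def_nabla^f} for $hf$, the left-hand side equals $\mathcal L(hf,\nabla^{hf}_{hf}v)=h\mathcal L(f,\nabla^{hf}_{hf}v)$. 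Since $\phi_f$ is injective, this yields $\nabla^{hf}_{hf}v=h\nabla^f_fv+\tfrac12dh(f)v$. Dividing by $h$ and using \eqref{def_nabla^f_2} gives $\nabla^{hf}_f v=\nabla^f_fv+\tfrac1{2h}dh(f)v$. Both sides are $\cO$-linear in the $\F$-slot, so the formula holds for arbitrary $g\in\cO(\F)$.

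Third, \eqref{change_pi_f}. Put $v=v_{q_{-2}(\xi)}=\phi_f^{-1}(q_{-2}(\xi))$. By the first step, the vector attached to $\hat f$ is $h^{-1}v$, so
\begin{equation*}
\pi_{hf}(\xi)=\xi-[hf,h^{-1}v]+\nabla^{hf}_{hf}(h^{-1}v).
\end{equation*}
The bracket term expands as
\begin{equation*}
[hf,h^{-1}v]=[f,v]-\tfrac{dh(f)}{h}v-\tfrac{dh(v)}{h}f.
\end{equation*}
For the connection term, we combine the Leibniz rule for $\nabla^{hf}$ with the second step:
\begin{equation*}
\nabla^{hf}_{hf}(h^{-1}v)=\nabla^f_fv-\tfrac{dh(f)}{2h}v.
\end{equation*}
Adding everything gives
\begin{equation*}
\pi_{hf}(\xi)=\pi_f(\xi)+\tfrac{dh(f)}{2h}v+\tfrac{dh(v)}{h}f.
\end{equation*}
Taking the $\cV\cC$-component yields \eqref{change_pi_f}. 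As a by-product we also get the change of the $\F$-component.

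None of these steps is conceptually difficult. The main place to be careful is the sign and factor bookkeeping in the second-order bracket $[hf,[hf,v]]$ and in $[hf,h^{-1}v]$. In particular, one must remember that the term $h\,dh(f)[f,v]$ contributes to $q_{-2}$, while terms proportional to $f$ do not. The factor $\tfrac12$ in \eqref{def_nabla^f} is exactly what produces the $\tfrac{1}{2h}$ in \eqref{change_nabla^f} and \eqref{change_pi_f}.
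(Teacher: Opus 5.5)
Your proof is correct and follows the paper's route: \eqref{change_phi_f} comes from tensoriality of $\mathcal L$ (and you rightly read the second identity there as $\phi_{hf}^{-1}=h^{-1}\phi_f^{-1}$), and \eqref{change_pi_f} comes from the same expansion of $\xi-[hf,h^{-1}v]+\nabla^{hf}_{hf}(h^{-1}v)$, ending with the same formula $\pi_{hf}(\xi)=\pi_f(\xi)+\frac{1}{2h}dh(f)v+\frac{1}{h}dh(v)f$; the only difference is that the paper cites Lemma 1.26(3) of \cite{H-N} for \eqref{change_nabla^f}, whereas you derive it directly from \eqref{def_nabla^f}, which makes your argument self-contained. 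There is one small slip in that derivation: the exact expansion of $[hf,[hf,v]]$ also contains the term $-h\,dh([f,v])f$ coming from $[hf,h[f,v]]$, which you dropped, but it is a multiple of $f$ and is annihilated by $q_{-2}$, so your conclusion is unaffected.
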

\begin{proof}
The identities \eqref{change_phi_f} are obvious and the identity
\eqref{change_nabla^f} was shown in Lemma 1.26 (3) of \cite{H-N}. By the identities
\eqref{change_phi_f} and \eqref{change_nabla^f}, and putting
$v=v_{q_{-2}(\xi)}=\phi_f^{-1}(q_{-2}(\xi))$, we have
\begin{align*}
\pi_{hf}(\xi)&=\xi-[hf, h^{-1}v]+\nabla_{hf}^{hf}h^{-1}v\\
&=\xi-h[f, h^{-1}v]+h^{-1}dh(v)f +h^{-1}\nabla_{hf}^{hf}v+hdh^{-1}(f)v\\
&=\xi-[f,v]+h^{-1}dh(v)f+h^{-1}(\nabla^f_{hf}v+\frac{1}{2}dh(f)v)\\
&=\pi_f(\xi)+\frac{1}{2h}dh(f)v+\frac{1}{h}dh(v)f,
\end{align*}
which implies the claim.
\end{proof}

Given a cone structure $\C\subset\P TM$ endowed with a characteristic conic
connection $\F$, Proposition \ref{prop_splitting_f} shows that for any local nowhere
vanishing section $f$ of $\F$, the projection $\pi_f$ induces a local isomorphism
\begin{equation}\label{isos_induced_by_f}
T^{-2}\C/\F\cong \cV\cC\oplus \bar{q}(\ker \pi_f^{\cV\cC })
\cong\cV\cC\oplus\textrm{gr}_{-2}(T\C),
\end{equation}
where $\bar{q}: T^{-2}\C\rightarrow T^{-2}\C/\F$ is the natural projection, and a
locally defined isomorphism $\phi_f:\cV\cC\to \textrm{gr}_{-2}(T\C)$ between the two
summands. This structure depends on $f$ and the explicit form of the dependence can
be read off the last part of Proposition \ref{prop_splitting_f_2}. Using this, we
will show that a weakening of this decomposition is independent of $f$ and we can
then study the question of whether this descends to local leaf spaces for $\F$.

\subsection{Split-quaternionic structures and Segre structures}\label{2.4new}
While most of these topics are, at least in the real smooth setting, available in
literature, they seem to be not well known. Since there are some subtleties
involved, we decided to add some detail. Most of this is a matter of linear algebra,
so we start the discussion in this setting.

Consider a complex vector space $E$ of even dimension $2n$ endowed with a linear
subspace $V\subset E$ of dimension $n$. What we obtained at the end of Section
\ref{2.3} corresponds to a decomposition $E=V\oplus\tilde V$ together with a
distinguished linear isomorphism $\tilde V\to V$. The choice of $\tilde V$ is
equivalent to a splitting of the short exact sequence $0\to V\to E\to E/V\to 0$ and
we will encode this via a linear map $\sigma:E/V\to E$ such that $q\circ\sigma=\id$
for the natural quotient projection $q:E\to E/V$. The isomorphism $\tilde V\to V$
will in turn be encoded as $\psi\circ q$ for a linear isomorphism $\psi:E/V\to
V$. For our purpose, a different encoding of these data will be useful.

\begin{lem}\label{lem:IJK}
  Let $E$ be a complex vector space of even dimension $2n$. Choosing an
  $n$-dimensional subspace $V\subset E$, a splitting of $q:E\to E/V$ and a linear
  isomorphism $\psi:E/V\to V$ is equivalent to choosing a pair $I,J:E\to 
   E$ of linear isomorphisms such that $I^2=J^2=\Id$ and $I\o J=-J\o I$.
\end{lem}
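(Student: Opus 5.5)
The plan is to write down both directions explicitly and then check that they are mutually inverse. Throughout, $V$ will be the $(+1)$-eigenspace of $I$ and the complement $\tilde V:=\sigma(E/V)$ will be the $(-1)$-eigenspace, while $J$ exchanges the two summands via $\psi$.

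Starting from the data $(V,\sigma,\psi)$, put $\tilde V:=\im(\sigma)$, so that $E=V\oplus\tilde V$ and $q|_{\tilde V}:\tilde V\to E/V$ is an isomorphism with inverse $\sigma$. Define $I$ to be $\Id$ on $V$ and $-\Id$ on $\tilde V$. Define $J$ on the two summands by
\begin{equation*}
J(\sigma(a)):=\psi(a)\quad (a\in E/V),\qquad J(v):=\sigma(\psi^{-1}(v))\quad (v\in V).
\end{equation*}
Then $I^2=\Id$ is obvious. Since the two pieces of $J$ are mutually inverse isomorphisms $\tilde V\to V$ and $V\to\tilde V$, we get $J^2=\Id$. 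Finally, $J$ maps each eigenspace of $I$ to the other, so $I\o J=-J\o I$ can be checked directly on $V$ and on $\tilde V$.

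Conversely, suppose $I,J$ are given with $I^2=J^2=\Id$ and $I\o J=-J\o I$. Since $I^2=\Id$, $I$ is diagonalizable with eigenvalues $\pm1$, so $E=E_+\oplus E_-$. Anticommutativity gives $IJx=-JIx$, hence $J(E_\pm)\subset E_\mp$. Because $J$ is invertible (indeed $J^2=\Id$), $J$ restricts to isomorphisms $E_+\cong E_-$, so both eigenspaces have dimension $n$. We then set $V:=E_+$. The restriction $q|_{E_-}:E_-\to E/V$ is an isomorphism, and we let $\sigma$ be its inverse composed with the inclusion, so that $q\o\sigma=\Id$. We put $\psi:=J\o\sigma:E/V\to V$, which is an isomorphism since $J$ maps $E_-$ isomorphically onto $E_+$.

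It remains to check that the two constructions are inverse to each other, which is essentially immediate. From $(I,J)$ we recover $\tilde V=E_-$, and the recipe above reproduces $I$ together with $J|_{E_-}=\psi\o q|_{E_-}$. On $V$, the relation $J^2=\Id$ forces $J|_{V}=(J|_{E_-})^{-1}=\sigma\o\psi^{-1}$, so $J$ is recovered as well. In the other direction, starting from $(V,\sigma,\psi)$, we get $E_+=V$, $E_-=\im\sigma$, and $J\o\sigma=\psi$ by construction.

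The only step needing real care is the dimension count in the converse direction. It uses that $J$ swaps the eigenspaces of $I$ and is invertible, and this is exactly where anticommutativity enters. Everything else is routine verification on the two summands.
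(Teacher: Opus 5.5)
Your proposal is correct and is essentially the paper's proof. The paper likewise takes $V$ as the $+1$-eigenspace of $I$ and $\operatorname{im}\sigma$ as the $-1$-eigenspace, with $J$ swapping the two via $\psi$; it phrases this through the identification $E\cong V\oplus V$, $w\mapsto (w-\sigma(q(w)),\psi(q(w)))$, and uses the same converse ($\sigma(w+V)$ the $E_-$-component of $w$, $\psi=J\circ\sigma$). Your explicit check that the two constructions are mutually inverse, including recovering $J|_V=\sigma\circ\psi^{-1}$ from $J^2=\Id$, simply fills in the step the paper calls obvious.
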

\begin{proof}
  Given $V$, a splitting $\sigma:E/V\to E$ and a linear isomorphism $\psi:E/V\to V$,
  we obtain an isomorphism $E\cong V\oplus V$. Explicitly, a vector $w\in E$ can be
  uniquely written as $v_1+\si(\psi^{-1}(v_2))$, where $v_2=\psi(q(w))$ and
  $v_1=w-\si(q(w))$ and we identify $w$ with the pair $(v_1,v_2)$. We then define
  linear maps $I,J:E\to E$ as $I(v_1,v_2)=(v_1,-v_2)$ and
  $J(v_1,v_2)=(v_2,v_1)$. This readily implies that $I^2=J^2=\id$ and that
  $K:=I\circ J=-J\circ I$, which is explicitly given by $K(v_1,v_2)=(v_2,-v_1)$,
  satisfies $K^2=-\id$.

For later use, we note that the maps $I$, $J$, and $K$ are explicitly given by
\begin{equation}\label{IJK}
\begin{gathered}
  I(w)=w-2\sigma(q(w))\\
  J(w)=\psi(q(w))+\sigma(\psi^{-1}(w-\sigma(q(w))))\\
  K(w)=\psi(q(w))-\sigma(\psi^{-1}(w-\sigma(q(w)))).
\end{gathered}
\end{equation}

Conversely, given $I, J:E\to E$, such that $I^2=J^2=\id$ and $I\circ J=-J\circ I$, it
follows that $I$ and $J$ are diagonalizable with eigenvalues $+1$ and $-1$ and that
$J$ maps the $+1$-eigenspace of $I$ isomorphically onto the $-1$-eigenspace of
$I$. We then define $V\subset E$ to be the $+1$-eigenspace of $I$, $\sigma(w+V)$ as
the component of $w$ in the $-1$-eigenspace of $I$ and $\psi:=J\o \sigma:E/V\to
V$. This obviously is inverse to the above construction.
\end{proof}

Taking the isomorphism $E\cong V\oplus V$ from the proof of this lemma and
interpreting the components as the ``columns of a matrix'', we can also view this as
an isomorphisms $E\cong \textrm{Hom}(\Bbb C^2,V)$ which sends $w\in E$ to the map
$(a,b)\mapsto av_1+bv_2=aw-a\si(q(w))+b\psi(q(w))$. In this picture, the linear
isomorphisms $I$, $J$, and $K:=I\o J$ correspond to precomposition by the matrices
$\left(\begin{smallmatrix} 1 & 0 \\ 0 & -1\end{smallmatrix}\right)$,
  $\left(\begin{smallmatrix} 0 & 1 \\ 1 & 0\end{smallmatrix}\right)$, and
    $\left(\begin{smallmatrix} 0 & 1 \\ -1 & 0\end{smallmatrix}\right)$,
      respectively. In particular, we readily conclude that $\id_E$, $I$, $J$, and
      $K$ span a $4$-dimensional unital subalgebra of $\textrm{Hom}(E,E)$ that is
      isomorphic to $M_{2\times 2}(\Bbb C)$.

      A natural weakening of this structure is replacing the choice of the maps $I$,
      $J$, and $K=I\o J$ by the choice of the three dimensional subspace of
      $\textrm{Hom}(E,E)$ they span, or equivalently of the subalgebra of
      $\textrm{Hom}(E,E)$ they span together with $\id_E$. It turns out that this
      also admits a nice description in the picture of the identification of $E$ with
      $\textrm{Hom}(\Bbb C^2,V)$. The space $\textrm{Hom}(\Bbb C^2,V)$ contains the
      cone $\widehat{Z}_0$ of maps of rank $<2$ and we call its pre-image
      $\widehat{Z}\subset E$, the \textit{Segre cone} in $E$ defined by the
      identification. We are now ready to formulate the results we will need in the
      setting of linear algebra.

\begin{lem}\label{lem:split-quat}
  Let $E$ be a complex vector space of even dimension $2n$. Then 

  \begin{enumerate}
  \item Suppose that we have pairs $(I,J)$ and $(\hat I,\hat J)$ as in Lemma
    \ref{lem:IJK} such that $I$, $J$ and $K=I\circ J$ and $\hat I$, $\hat J$, and
    $\hat K=\hat I\circ\hat J$ span the same linear subspace of
    $\emph{Hom}(E,E)$. Then the two pairs give rise to the same Segre cone in $E$.

  \item In particular, this happens if in the setting of Lemma \ref{lem:IJK}, we
    start from data $(V,\si,\psi)$ and $(\hat V,\hat\si,\hat\psi)$ that are related
    by $\hat V=V$, $\hat\psi=a\psi$ and $\hat\si=\si+b\psi$ for $a,b\in\Bbb C$ with
    $a\neq 0$.
  \end{enumerate}
\end{lem}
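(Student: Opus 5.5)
Both parts reduce to an intrinsic description of the Segre cone in terms of the span $\mathcal A:=\langle I,J,K\rangle\subset\textrm{Hom}(E,E)$, together with a direct computation in the matrix model.

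For (1), the key point is the identification $E\cong\textrm{Hom}(\Bbb C^2,V)$. Under it, $\id_E,I,J,K$ act by precomposition with the matrices $\left(\begin{smallmatrix} 1 & 0 \\ 0 & 1\end{smallmatrix}\right)$, $\left(\begin{smallmatrix} 1 & 0 \\ 0 & -1\end{smallmatrix}\right)$, $\left(\begin{smallmatrix} 0 & 1 \\ 1 & 0\end{smallmatrix}\right)$ and $\left(\begin{smallmatrix} 0 & 1 \\ -1 & 0\end{smallmatrix}\right)$. Hence the algebra $\mathcal B:=\Bbb C\,\id_E\oplus\mathcal A$ is exactly the set of all maps $w\mapsto w\circ A$ with $A\in M_{2\times 2}(\Bbb C)$.

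I claim that the Segre cone is
$$\widehat Z=\{w\in E:\ \dim(\mathcal B\cdot w)\le n\},$$
where $\mathcal B\cdot w=\{Tw:T\in\mathcal B\}$. To see this, write $w\in\textrm{Hom}(\Bbb C^2,V)$. The space $\{w\circ A: A\in M_{2\times2}(\Bbb C)\}$ consists of all maps $\Bbb C^2\to V$ with image in $\im(w)$, so it is isomorphic to $\textrm{Hom}(\Bbb C^2,\im w)$ and has dimension $2\operatorname{rank}(w)$. The condition $\dim\le n$ is not right in general, so I use the rank directly: $\dim(\mathcal B\cdot w)\le 2$ holds exactly when $\operatorname{rank}(w)\le 1$. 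This gives
$$\widehat Z=\{w\in E:\ \dim(\mathcal B\cdot w)\le 2\},$$
which is the description I will use.

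The algebra $\mathcal B$ is determined by $\mathcal A$, since $\mathcal B=\Bbb C\id_E+\mathcal A$. So two pairs whose $I,J,K$ span the same subspace yield the same $\mathcal B$, and therefore the same Segre cone. The only care needed is in verifying $\dim\{w\circ A\}=2\operatorname{rank} w$. This holds because $A\mapsto w\circ A$ has image $\{\phi:\Bbb C^2\to V \mid \im\phi\subset\im w\}$, as $w$ surjects onto its image.

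For (2), I will use the explicit formulas \eqref{IJK}. We have $\hat I(w)=w-2\hat\si(q(w))=I(w)-2b\psi(q(w))$. Here $\psi\circ q=\tfrac12(J+K)$ by \eqref{IJK}, so $\hat I\in\mathcal A$. Similarly, I expand $\hat J$ and $\hat K$ from \eqref{IJK} using $\hat\psi^{-1}=a^{-1}\psi^{-1}$ and $\hat\si=\si+b\psi$. This expresses them as combinations of $\psi\circ q=\tfrac12(J+K)$, $\si\circ\psi^{-1}\circ(\id-\si\circ q)=\tfrac12(J-K)$, $\si\circ q=\tfrac12(\id-I)$, and the composite $\psi\circ q$ followed by projection.

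A cleaner route is to work in the matrix model. Changing $\psi$ to $a\psi$ and $\si$ to $\si+b\psi$ changes the identification $E\cong V\oplus V$ by $(v_1,v_2)\mapsto(v_1-bv_2/a\cdot a,\ldots)$. Concretely, this is precomposition with an invertible triangular matrix $P\in GL(2,\Bbb C)$. The new $I,J,K$ are then conjugates $P^{-1}XP$ of the old matrices, which span the trace-free matrices. Conjugation preserves the trace-free subspace $\mathfrak{sl}(2)$, so the spans agree and (1) applies. Alternatively, since the rank is preserved under precomposition by $P$, the Segre cones agree directly.

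The main step to get right is computing $P$ explicitly. From $w=v_1+\si\psi^{-1}v_2$, the new components are $\hat v_2=a v_2$ and $\hat v_1=w-\hat\si q(w)=v_1-b v_2$. So $P=\left(\begin{smallmatrix}1&0\\-b&a\end{smallmatrix}\right)$ acting on columns, which is invertible because $a\neq0$.
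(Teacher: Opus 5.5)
Your proof is correct. Part (1) is the paper's argument. $\Bbb C\id_E\oplus\operatorname{span}\{I,J,K\}$ acts on $\textrm{Hom}(\Bbb C^2,V)$ by precomposition with all of $M_{2\times 2}(\Bbb C)$, so $w\in\widehat Z$ if and only if $\dim(\mathcal B\cdot w)\le 2$, and this depends only on the span. The abandoned ``$\le n$'' sentence and the garbled formula $(v_1-bv_2/a\cdot a,\ldots)$ should simply be deleted.

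For (2) your route differs from the paper's. The paper stays with \eqref{IJK} and expands $\hat I=I-b(J+K)$, $\tfrac12(\hat J+\hat K)=\hat\psi\circ q=\tfrac a2(J+K)$ and $\tfrac12(\hat J-\hat K)$ term by term. It then regroups everything into multiples of $I$, $J+K$ and $J-K$; this is the expansion you sketch but do not complete.

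You instead note that $(\hat v_1,\hat v_2)=(v_1-bv_2,av_2)$. Writing $w\mapsto F_w$ and $w\mapsto\hat F_w$ for the two identifications with $\textrm{Hom}(\Bbb C^2,V)$, this says $\hat F_w=F_w\circ P$ with $P=\left(\begin{smallmatrix}1&0\\-b&a\end{smallmatrix}\right)$. Hence $\hat I,\hat J,\hat K$ act in the old picture by precomposition with $PXP^{-1}$ (not $P^{-1}XP$ as you wrote, which is immaterial). Since $A\mapsto(w\mapsto w\circ A)$ is linear and $\mathfrak{sl}(2,\Bbb C)$ is invariant under conjugation, the spans agree.

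What your route buys is the reason the lemma holds: the span only depends on the identification $E\cong\textrm{Hom}(\Bbb C^2,V)$ up to the action of $\textrm{GL}(2,\Bbb C)$ on $\Bbb C^2$. This fits the remark following Lemma \ref{lem-split-quat}. It also avoids the bookkeeping. The coefficients, being entries of $PXP^{-1}$, are still polynomial in $a^{\pm1},b$, so the holomorphic dependence invoked in Proposition \ref{para-q-str} remains evident. The paper's computation, for its part, writes those coefficients down directly.

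One caution: your ``alternatively'' (rank is invariant under precomposition with $P$) only gives equality of the Segre cones, not of the spans. Proposition \ref{para-q-str} needs the spans to agree, so it is your conjugation argument that proves (2) in the intended sense.
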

\begin{proof}
  (1) We show that the elements in the Segre cone $\widehat{Z}\subset\textrm{Hom}(E,E)$
  obtained from a pair $(I,J)$ can be characterized in terms of the subalgebra
  $\mathcal A\subset \textrm{Hom}(E,E)$ spanned by $I$, $J$, $K=I\o J$ and $\id_E$,
  which of course implies the claimed result. Given an element $w\in E$ corresponding
  to a linear map $f:\Bbb C^2\to V$ we can look at the images of $w$ under all maps
  in $\mathcal A$, which clearly form a linear subspace $\mathcal A(w)\subset
  E$. From the above description of the actions of elements of $\mathcal A$ in terms
  of pre-compositions we readily conclude that this corresponds to the subspace of
  $\textrm{Hom}(\Bbb C^2,V)$ of the form $f\o g$ for a linear map $g:\Bbb C^2\to\Bbb
  C^2$. But this is exactly the subspace of all linear maps $\Bbb C^2\to V$ whose
  image is contained in the image of $f$, so its dimension is twice the rank of
  $f$. Hence $w$ lies in the Segre cone $\widehat{Z}$ if and only if $\dim(\mathcal
  A(w))\leq 2$.
        
\smallskip
        
(2) We use the explicit formulae for $I$, $J$, and $K$ from \eqref{IJK}. First, we
get $\psi(q(w))=\frac12(J(w)+K(w))$. Now by assumption
$$
\hat I(w)=w-2\hat\si(q(w))=w-2\si(q(w))-2b\psi(q(w)),
$$
so $\hat I$ is a linear combination of $I$ and $\frac12(J+K)$. Our assumptions
also imply directly that $\frac12(\hat J+\hat K)=\frac{a}2(J+K)$, so to complete the
proof, it suffices to show that $\frac12(\hat J-\hat
K)=\hat\si\hat\psi^{-1}(w-\hat\si(q(w)))$ lies in the span of $I,J$ and $K$. For this
we first observe that $w-\hat\si(q(w))=w-\si(q(w))-b\ps(q(w))$ and applying
$\hat\psi^{-1}=a^{-1}\psi^{-1}$ to this gives
$a^{-1}\psi^{-1}(w-\si(q(w)))-a^{-1}bq(w)$. Finally, we have to apply
$\hat\si=\si+b\ps$ to this, which gives
  \begin{equation*}
 \quad\quad a^{-1}\si\psi^{-1}(w-\si(q(w)))+a^{-1}b(w-2\si(q(w)))+a^{-1}b^2\ps(q(w)).  
  \end{equation*}
  Now the first term is a multiple of $(J-K)(w)$, the second is a multiple of
  $I(w)$ and the last one a multiple of $(J+K)(w)$.
\end{proof}

Now we are ready to move to the picture of geometric structures on vector bundles and
we first collect the necessary definitions. Observe that the notion of an isotrivial
cone structure from Definition \ref{def_cone_str} (3) naturally extends to the
setting of holomorphic vector bundles over complex manifolds. For a holomorphic
vector bundle $\mathcal{E}\to M$, we can naturally form the projectivization
$\P {\mathcal E}\to M$. Given a complex vector space $E$ whose dimension equals the
rank of $\mathcal E$ and viewing $\mathcal E$ as modeled on $E$, we can view
$\P\mathcal E$ as a fiber bundle with typical fiber $\P E$. Given a closed
submanifold $Z\subset \P E$, there is thus the natural notion of a $Z$-isotrivial
cone structure on $\mathcal E$ as a subbundle $\C \subset\P\mathcal E$ which is
modeled on $Z\subset\P E$. Using this observation, we make the following
definitions.

\begin{defin}\label{def-spli-quat}
  Let $M$ be a complex manifold and let $\mathcal{E}\to M$ be a vector bundle of even
  rank $2n$.

  (1) A \textit{Segre structure} on $\mathcal{E}$ is a $Z$-isotrivial cone structure
  $\C\subset \P \mathcal{E}$ on $\mathcal{E}$ modeled on the cone
  $Z\subset\P \textrm{Hom}(\Bbb C^2,\Bbb C^n)$ of maps of rank $1$.

  (2) A (complex) \textit{split quaternionic structure} on $\mathcal{E}$ is a
  subbundle $Q\subset \textrm{Hom}(\mathcal{E},\mathcal{E})$ of rank $3$, which can
  be locally spanned by sections $I$, $J$ and $K=I\circ J$ such that $I^2=J^2=\id$
  and $I\circ J=-J\circ I$. Equivalently, one can add the identity and view the
  structure as a bundle of associative unital subalgebras in
  $\textrm{Hom}(\mathcal{E},\mathcal{E})$ modeled on the algebra
  $M_{2\times 2}(\Bbb C)$.
\end{defin}

\begin{rem}\label{rem-split-quat}
  The motivation for the terminology in (2) comes from the concept of (almost)
  quaternionic structures in the real smooth setting together with the fact that
  $M_{2\times 2}(\Bbb R)$ is a split-signature analog of the quaternions. Thus
  $M_{2\times 2}(\Bbb R)$ is also called the algebra of split quaternions. In this
  spirit, the relations $I^2=J^2=\id$, $IJ=-JI$ are called split quaternion
  relations. Over $\Bbb C$, there is no difference between quaternions and split
  quaternions, so one could also call the structure in (2) a complex quaternionic
  structure. Since our focus is on local generators which satisfy the split
  quaternionion relations, we prefer to use the name split-quaternionic structure. In
  the real smooth setting (where there is a difference to quaterioninc structures),
  there is also the terminology ``para-quaternionic structure'', but this seems less
  descriptive to us.
\end{rem}

\begin{lem}\label{lem-split-quat}
  Let $M$ be a complex manifold and $\mathcal{E}\to M$ a holomorphic vector bundle of
  even rank $2n$. Then a split quaternionic structure
  $Q\subset \emph{Hom}(\mathcal{E},\mathcal{E})$ on $\mathcal{E}$ canonically
  induces a Segre structure on $\mathcal{E}$.
\end{lem}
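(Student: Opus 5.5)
The plan is to build the Segre cone fiberwise from the local generators and to use the characterization from the proof of Lemma \ref{lem:split-quat}~(1), which depends only on the subalgebra.

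First I will define the cone. For $x\in M$, let $\mathcal A_x\subset\textrm{Hom}(\mathcal E_x,\mathcal E_x)$ be the unital subalgebra $Q_x\oplus\Bbb C\id$. I put
$$\widehat\C_x:=\{w\in\mathcal E_x:\dim(\mathcal A_x(w))\le 2\}$$
and let $\C\subset\P\mathcal E$ be its projectivization. This definition uses only $Q$, so no choice of local generators enters.

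Second, I will identify $\widehat\C_x$ with a model Segre cone. Over an open set where $Q$ is spanned by $I,J,K=I\o J$ satisfying the split quaternion relations, Lemma \ref{lem:IJK} applies pointwise. It produces $V_x$, $\si_x$ and $\psi_x$, and hence an isomorphism $\mathcal E_x\cong\textrm{Hom}(\Bbb C^2,V_x)$. Under this isomorphism $\mathcal A_x$ acts by precomposition with $M_{2\times2}(\Bbb C)$. The proof of Lemma \ref{lem:split-quat}~(1) then shows that $\widehat\C_x$ is exactly the preimage of the maps of rank $<2$. Choosing a basis of $V_x\cong\Bbb C^n$ maps $\widehat\C_x$ linearly onto the cone of rank $\le1$ maps in $\textrm{Hom}(\Bbb C^2,\Bbb C^n)$. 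So each fiber is projectively isomorphic to the model $Z$.

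Third, I will prove local triviality in the holomorphic category. Take local holomorphic sections $I,J$ as above. Then $V$, the $+1$-eigenspace of $I$, equals $\im(\id+I)$ and has constant rank $n$, so it is a holomorphic subbundle. The maps $\si$ and $\psi=J\o\si$ are holomorphic bundle maps given by the formulas in the proof of Lemma \ref{lem:IJK}. Together they give a holomorphic isomorphism $\mathcal E\cong\textrm{Hom}(\Bbb C^2,V)$ over the open set. A local holomorphic frame of $V$ then gives a trivialization $\mathcal E|_U\cong U\times\textrm{Hom}(\Bbb C^2,\Bbb C^n)$ taking $\C|_U$ to $U\times Z$. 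Hence $\C$ is a closed submanifold of $\P\mathcal E$, its projection to $M$ is a surjective submersion, and $\C$ is a $Z$-isotrivial cone structure, that is, a Segre structure.

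The main subtlety is canonicity: a different local generating pair $(\hat I,\hat J)$ spanning the same $Q$ must give the same cone. Here I rely on Lemma \ref{lem:split-quat}~(1), and in fact the algebra-intrinsic definition of $\widehat\C_x$ in the first step makes this automatic. The remaining checks, smoothness and holomorphy of the trivializations, are routine.
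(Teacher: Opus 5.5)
Your proposal is correct and follows the paper's proof. Locally you use generators $I,J$ of $Q$ and Lemma \ref{lem:IJK} to identify $\mathcal E$ with $\textrm{Hom}(\Bbb C^2,\mathcal V)$, and independence of the choice of generators comes from the characterization $\dim(\mathcal A_x(w))\le 2$ in the proof of Lemma \ref{lem:split-quat}~(1). The only differences are presentational: you take that characterization as the definition of the cone instead of gluing the local cones afterwards, and you spell out the holomorphic local triviality that the paper leaves implicit.
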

\begin{proof}
  This is a direct translation of the results from linear algebra. Starting from
  $Q\subset \textrm{Hom}(\mathcal E,\mathcal E)$, we choose local sections $I$ and
  $J$ as in the definition. Then the pointwise $+1$-eigenspaces of $I$ form a
  holomorphic subbundle $\mathcal V$ of $\mathcal E$ of rank $n$. The induced
  pointwise identifications $\mathcal E_x\cong\textrm{Hom}(\Bbb C^2,\mathcal V_x)$
  define a Segre cone in each fiber $\mathcal E_x$, so locally we obtain a Segre
  structure. By Lemma \ref{lem:split-quat} the Segre cones depend only on $Q$ and not
  on the choices of $I$ and $J$, so the local cone structures fit together to define
  a global Segre structure on $\mathcal E$ which is canonically associated to $Q$.
\end{proof}

\begin{rem}
  In the real smooth setting, it is well known that a Segre structure in dimension
  $2n$ is equivalent to a local identification with a tensor product of auxiliary
  bundles of rank $2$ and $n$. In \cite{Zadnik} it is shown that these structures are
  in turn equivalent to a split quaternionic structure. All these facts indeed extend
  to the holomorphic setting, but since we don't formally need them, we just give a
  brief outline of the argument. The key ingredient for this is the description of
  the automorphism group $\textrm{Aut}(\widehat{Z}_0)$ of the cone $\widehat{Z}_0$ of maps of rank
  $<2$ in $\textrm{Hom}(\Bbb C^2,\Bbb C^n)$. This group is the image of
  $\textrm{GL}(n,\Bbb C)\times \textrm{GL}(2,\Bbb C)$ under the homomorphism to
  $\textrm{GL}(\textrm{Hom}(\Bbb C^2,\Bbb C^n))$ coming from composition from both
  sides, see e.g.\ Appendix A of \cite{Mettler} for a proof.

  The main subtlety comes from the fact that, depending on the parity of $n$, this
  image is either isomorphic to
  $\textrm{S}(\textrm{GL}(n,\Bbb C)\times \textrm{GL}(2,\Bbb C))$ or to the quotient
  of this group by the two element subgroup
  $\{(\Bbb I_n,\Bbb I_2),(-\Bbb I_n,-\Bbb I_2)\}$. Now a Segre structure on
  $\mathcal E$ defines a reduction of structure group of the frame bundle of
  $\mathcal E$ to the group $\textrm{Aut}(\widehat Z_0)$, compare to Section
  \ref{sec_assoc_G-structure} below, which locally lifts to a two-fold covering
  without problems. The groups
  $\textrm{S}(\textrm{GL}(n,\Bbb C)\times \textrm{GL}(2,\Bbb C))$ naturally acts on
  $\Bbb C^{2*}$ and $\Bbb C^n$, and the tensor product of the corresponding
  associated bundles is isomorphic to $\mathcal E$.

  Via the local isomorphism to a tensor product, acting on the two dimensional factor
  in the tensor product gives, for each point $x$, a subalgebra in
  $\textrm{Hom}(\mathcal{E}_x,\mathcal E_x)$ isomorphic to $M_{2\times 2}(\Bbb C)$
  and we define $Q_x$ to be the subspace formed by trace-free-matrices. Locally,
  these fit together to a holomorphic subbundle in
  $\textrm{Hom}(\mathcal{E},\mathcal{E})$, which of course can be spanned by
  appropriate sections $I$, $J$ and $K$. Since the only non-uniqueness in the tensor
  product representation is a sign change, this globalizes to a split quaternionic
  structure on $\mathcal {E}$.
\end{rem}

\subsection{The split-quaternionic structure on $T^{-2}\C/\F$ induced by a
  characteristic conic connection $\F$.}
Let us return to the situation of a cone structure $\C\subset\P TM$ endowed with a
characteristic conic connection $\F$. Then we have the short exact sequence
$0\to \cV\C\to T^{-2}\C/\F \to \textrm{gr}_{-2}(T\C)\to 0$ from the diagram
\eqref{com_exact_diagram}. In Proposition \ref{prop_splitting_f}, we have associated
to a local nowhere vanishing section $f\in\cO(\F)$ in particular a splitting of this
sequence as well as an linear isomorphism $\phi_f: \cV\C\to
\textrm{gr}_{-2}(T\C)$. By part (1) of Lemma \ref{lem:IJK}, these data can be
equivalently encoded in terms of locally defined endomorphisms $I_f$ and $J_f$ of
$T^{-2}\C/\F$ which satisfy the split quaternion relations. Whereas the pair 
$(I_f,J_f)$ depends on $f$, we can now show that the induced rank $3$ subbundle in
$\textrm{End}(T^{-2}\C/\F)$ does not and hence that one obtains a canonical (globally
defined) split quaternionic structure on $T^{-2}\C/\F$.

\begin{prop}\label{para-q-str}
  The local data $(I_f,J_f)$ induce a globally defined split quaternionic structure
  $Q\subset \emph{End}(T^{-2}\C/\F)$ on the vector bundle $T^{-2}\C/\F$.
\end{prop}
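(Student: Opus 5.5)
The plan is to reduce the claim to Lemma \ref{lem:split-quat} (2) applied fiberwise to $E=T^{-2}\C/\F$ with $V=\cV\C$. The data determined by $f$ are the splitting $\sigma_f$ induced by $\pi_f$ (with image $\bar q(\ker\pi_f^{\cV\cC})$) and the isomorphism $\psi_f:=\phi_f^{-1}:\textrm{gr}_{-2}(T\C)\to\cV\C$. Let $\hat f=hf$ be another local trivialization with $h$ nowhere vanishing. It then suffices to show two things at each point $y$: that $\hat\psi=a\psi$ and that $\hat\sigma=\sigma+b\psi$ for scalars $a\neq0$ and $b$ (in fact holomorphic functions, evaluated pointwise). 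Once this is done, the span of $I_f,J_f,K_f$ equals the span of $I_{\hat f},J_{\hat f},K_{\hat f}$ at every point. Indeed, the proof of Lemma \ref{lem:split-quat} (2) exhibits $\hat I,\hat J,\hat K$ as explicit linear combinations of $I,J,K$, and since the relation is symmetric the spans coincide.

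\textbf{The isomorphism.} By \eqref{change_phi_f} we have $\phi_{\hat f}^{-1}=h^{-1}\phi_f^{-1}$, so $a=h^{-1}$.

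\textbf{The splitting.} The splitting $\sigma_f:\textrm{gr}_{-2}(T\C)\to T^{-2}\C/\F$ sends $q_{-2}(\xi)$ to $\bar q(\xi-\pi_f(\xi))=\bar q(\xi-\pi_f^{\cV\cC}(\xi))$, because the $\F$-component dies in the quotient. By \eqref{change_pi_f},
\[
\sigma_{\hat f}(q_{-2}(\xi))=\sigma_f(q_{-2}(\xi))-\tfrac{1}{2h}dh(f)\,\phi_f^{-1}(q_{-2}(\xi)),
\]
so $\hat\sigma=\sigma+b\psi$ with $b=-\frac{1}{2h}dh(f)$. Here $\cV\C$ is included in $T^{-2}\C/\F$ via $\bar q$. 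The key point is that the $\frac1h dh(v)f$ term appearing in the proof of Proposition \ref{prop_splitting_f_2} lies in $\F$, so it vanishes in the quotient. This is exactly why one passes to $T^{-2}\C/\F$.

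\textbf{Globalizing.} The subbundle $Q_f$ spanned by $I_f,J_f,K_f$ is holomorphic and has rank $3$ on the domain of $f$, since $I_f,J_f,K_f$ are holomorphic and pointwise linearly independent, spanning the trace-free part of a copy of $M_{2\times2}(\Bbb C)$. On overlaps of the domains of different trivializations the $Q_f$ agree by the above, so they glue to a global $Q\subset\textrm{End}(T^{-2}\C/\F)$. Each $Q_f$ is locally spanned by sections satisfying the split quaternion relations by Lemma \ref{lem:IJK}, so $Q$ is a split quaternionic structure in the sense of Definition \ref{def-spli-quat}.

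The main obstacle is bookkeeping rather than anything substantive. One must make sure the conventions match those of Lemma \ref{lem:IJK}: there $\psi:E/V\to V$, so $\psi_f$ must be $\phi_f^{-1}$ and not $\phi_f$, and the splitting must be read correctly through the identification \eqref{isos_induced_by_f}, with the sign of $b$ being irrelevant.
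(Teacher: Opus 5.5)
Your proposal is correct and matches the paper's proof: you take $\psi=\phi_f^{-1}$ and the splitting $\sigma_f(q_{-2}(\xi))=\bar q(\xi)-\pi_f^{\cV\cC}(\xi)$, read off $a=h^{-1}$ and $b=-\frac{1}{2h}dh(f)$ from Proposition \ref{prop_splitting_f_2}, and apply part (2) of Lemma \ref{lem:split-quat} pointwise, with the holomorphic dependence of the coefficients letting the local subbundles $Q_f$ glue to a global $Q$. Your observation that the term $\frac1h dh(v)f$ lies in $\F$ and so vanishes in $T^{-2}\C/\F$ is exactly why the reduction works.
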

\begin{proof}
 We need to show that the a priori only locally defined subbundle $Q_f\subset \textrm{End}(T^{-2}\C/\F)$ spanned by $I_f$, $J_f$ and $K_f=I_f\circ J_f $
 is independent of the choice of local nowhere vanishing section $f\in\cO(\F)$. We use the results on the dependence of the data on $f$ from Proposition
 \ref{prop_splitting_f_2} to show that the second statement in part (2) of Lemma
 \ref{lem:split-quat} applies in our situation. The isomorphism $\psi$ used in that
 lemma is simply the map $\phi_f^{-1}$ from Proposition \ref{prop_splitting_f}, which
 point-wise just changes by a non-zero multiple by equation \eqref{change_phi_f}. On
 the other hand, the splitting $\si$ is characterized by $\si(q_{-2}(\xi))=\bar
 q(\xi)-\pi_f^{\cV\cC}(\xi)$. But equation \eqref{change_pi_f} exactly says that in
 each point, the change of $-\pi_f^{\cV\cC}$ is given by adding a multiple of
 $\phi_f^{-1}(q_{-2}(\xi))$. Of course, the factors computed in the proof of part (2)
 of Lemma \ref{lem:split-quat} then depend holomorphically on the base point and the
 claim follows.
\end{proof}

\subsection{The Bott connection and the cubic torsion}\label{2.6}
Suppose $\C\subset\mathbb P TM$ is a cone structure with a characteristic conic
connection $\F$. Since one has $[\cO(\F), \cO(T^{-2}\C)]\subset \cO(T^{-2}\C)$, the
vector bundle $T^{-2}\C/\F$ descends to a distribution $\mathcal D\subset TN$ on any
local leaf space $N$ for $\F$. Thus it is natural to ask whether the split
quaternionic structure obtained in Proposition \ref {para-q-str} descends to a split
quaternionic structure on $\mathcal D$. Our aim is to show that this happens if and
only if the cubic torsion of $\F$ vanishes identically.

In foliation theory, it is well known that for an involutive subbundle of a tangent
bundle, one can use the Lie bracket with sections of this subbundle to define a
partial connection on the quotient of the tangent bundle by the subbundle. To adapt
this to our setting, let $\C\subset\P TM$ be a cone structure on a manifold $M$
equipped with a characteristic conic connection $\F\subset T^{-1}\C$. As before, let
us denote by $\bar q: T^{-2}\C\rightarrow T^{-2}\C/\F$ the natural projection.  Note
that for local sections $f\in\cO(\F)$ and $\xi\in\cO(T^{-2}\C)$, the section
$\bar q([f,\xi])\in\cO(T^{-2}\C/\F)$ depends only on $\bar q(\xi)$ and one
verifies directly:
\begin{prop}
For a characteristic conic connection $\F$ on a cone structure $\C\subset\P TM$
consider $\nabla^{\mathcal B} : \cO(\F)\times \cO(T^{-2}\C/\F)\rightarrow
\cO(T^{-2}\C/\F)$ characterized by
\begin{equation*} 
\nabla^{\mathcal B}_f \bar q(\xi):=\bar q([f,\xi])
\end{equation*}
for $f\in\cO(\F)$ and $\xi\in\cO(T^{-2}\C)$. Then $\nabla^{\mathcal B}$ is a
well defined partial $\F$-connection on $T^{-2}\C/\F$, which we refer to as the Bott
connection on $T^{-2}\C/\F$.
\end{prop}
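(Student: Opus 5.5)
The plan is to check three things in turn: that the right-hand side $\bar q([f,\xi])$ makes sense, that it depends only on $\bar q(\xi)$, and that the result satisfies the two axioms of Definition \ref{def_partial_conn}.

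\emph{Well-definedness.} For $f\in\cO(\F)$ and $\xi\in\cO(T^{-2}\C)$, the assumption that $\F$ is characteristic gives $[f,\xi]\in\cO(T^{-2}\C)$. Hence $\bar q([f,\xi])$ is a local section of $T^{-2}\C/\F$. Next we show it depends only on $\bar q(\xi)$. If $\bar q(\xi)=\bar q(\xi')$, then $\xi-\xi'=g$ for some $g\in\cO(\F)$. Since $\F$ has rank one, locally $f=h_1 f_0$ and $g=h_2 f_0$ for a nowhere vanishing section $f_0$ and holomorphic functions $h_1,h_2$. Then
$$
[h_1f_0,h_2f_0]=\big(h_1\,dh_2(f_0)-h_2\,dh_1(f_0)\big)f_0\in\cO(\F),
$$
so $\bar q([f,g])=0$. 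One also has to handle the points where $f$ vanishes. This is harmless, because the same computation works for any local frame $f_0$ of $\F$, and the identity is local. So $\nabla^{\mathcal B}_f s$ is well defined for every $s\in\cO(T^{-2}\C/\F)$: write $s=\bar q(\xi)$, which is always possible locally since $\bar q$ is a surjective bundle map.

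\emph{Linearity and the axioms.} $\C$-bilinearity is clear from bilinearity of the Lie bracket.

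For tensoriality in $f$, take a local holomorphic function $h$. Then $[hf,\xi]=h[f,\xi]-dh(\xi)f$. The second term lies in $\F$, so it is killed by $\bar q$, and $\nabla^{\mathcal B}_{hf}\bar q(\xi)=h\nabla^{\mathcal B}_f\bar q(\xi)$.

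For the Leibniz rule in the second argument, $\bar q(h\xi)=h\bar q(\xi)$ and $[f,h\xi]=h[f,\xi]+dh(f)\xi$. Applying $\bar q$ gives $\nabla^{\mathcal B}_f(h\bar q(\xi))=h\nabla^{\mathcal B}_f\bar q(\xi)+dh(f)\bar q(\xi)$, as required.

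\emph{Main obstacle.} None of these steps is hard. The only point needing care is the independence of the choice of representative $\xi$, and it rests on two facts: involutivity of the line bundle $\F$, which is automatic for rank one, and the characteristic property, which ensures $[f,\xi]$ stays in $T^{-2}\C$ so that $\bar q$ can be applied at all.
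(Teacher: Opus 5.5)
Your proposal is correct and is essentially the paper's own argument: the paper only remarks that $\bar q([f,\xi])$ depends only on $\bar q(\xi)$ and says the rest is a direct verification, which you supply by using the characteristic property to stay in $T^{-2}\C$, using that the line bundle $\F$ is involutive, and checking the two axioms from the bracket identities for $[hf,\xi]$ and $[f,h\xi]$. Your remark about points where $f$ vanishes is unnecessary, since writing $f=h_1f_0$ with respect to a local frame $f_0$ already covers that case.
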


This canonical partial connection on $T^{-2}\C/\F$ can be expressed explicitly using
the data associated to a local non-vanishing section $f\in\cO(\F)$ in Proposition
\ref{prop_splitting_f}. Recall that, associated to $f$, we have obtained there an
isomorphism $T^{-2}\C/\F\cong\textrm{gr}_{-2}(T\C)\oplus\cV\cC$ as well as partial
connections on the two summands on the right hand side. We will express this
splitting by writing sections of $T^{-2}\C/\F$ as vectors $\binom{q}{v}_f$ with
$q\in\cO(\textrm{gr}_{-2}(T\C))$ and $v\in\cO(\cV\cC)$.

\begin{prop}\label{prop_Bott}
Suppose $\C\subset\P TM$ is a cone structure equipped with a characteristic conic
connection $\F$. Fix a local nowhere vanishing section $f\in\cO(\F)$ and denote by
$\nabla^f$ the induced partial connections on $\emph{gr}_{-2}(T\C)$ and on $\cV\cC$
as defined in Proposition \ref{prop_splitting_f}.  Then there is a local section
$\tilde\mu^f\in\cO(\emph{gr}_{-2}(T\C)^*\otimes\F^*\otimes\cV\cC)$ such that with
respect to the splitting
\begin{equation*}
T^{-2}\C/\F\cong\emph{gr}_{-2}(T\C)\oplus\cV\cC ,
\end{equation*}
induced by $\pi_f$ of Proposition
\ref{prop_splitting_f}, and in the notation from above, we get:
\begin{equation}\label{formula_Bott}
\nabla^\mathcal B_g \begin{pmatrix}q\\ v\end{pmatrix}_f= \begin{pmatrix}\nabla^f_gq
    +\mathcal L(g,v)\\ \nabla^f_gv-\tilde\mu^f(q,g)\end{pmatrix}_f
  \in\begin{matrix}\cO(\emph{gr}_{-2}(T\C))\\\oplus\\\cO(\cV\cC )\end{matrix},
\end{equation}
for any local section $g\in\cO(\F)$. Explicitly, for $q=\Cal L(f,v)$, we get
\begin{equation*}
\tilde\mu^f(q,g)=-\pi_f^{\cV\cC }([g,[f,v]-\nabla^f_fv]).
\end{equation*}
\end{prop}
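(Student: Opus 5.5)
By $\F$-linearity of both sides in $g$, it suffices to take $g=hf$ with $h$ a local function. I will work in the splitting given by $\pi_f$. In that splitting, a section $\binom{q}{v}_f$ of $T^{-2}\C/\F$ is represented by
\[
\xi=[f,w]-\nabla^f_fw+v, \qquad w:=\phi_f^{-1}(q).
\]
This is legitimate because $q_{-2}(\xi)=\Cal L(f,w)=q$ and, by Proposition \ref{prop_splitting_f}(4), $\pi_f([f,w])=\nabla^f_fw$. Hence $\pi_f^{\cV\cC}(\xi)=v$ and $\pi_f^{\F}(\xi)=0$, so $\bar q(\xi)$ has components $(q,v)$. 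Then
\[
\nabla^{\mathcal B}_g\bar q(\xi)=\bar q([g,\xi])=\bar q([g,[f,w]-\nabla^f_fw])+\bar q([g,v]).
\]
I will treat the two summands separately.

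For the summand $[g,v]$: we have $[g,v]\in T^{-2}\C$ with $q_{-2}([g,v])=\Cal L(g,v)$. Writing $g=hf$, we get $[hf,v]=h[f,v]-dh(v)f$, and the $\F$-part disappears modulo $\F$. Proposition \ref{prop_splitting_f}(4) gives $\pi_f^{\cV\cC}([f,v])=\nabla^f_fv$, so $\pi_f^{\cV\cC}([g,v])=h\nabla^f_fv=\nabla^f_gv$. Thus this summand contributes $\binom{\Cal L(g,v)}{\nabla^f_gv}_f$.

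For the summand $\eta:=[g,[f,w]-\nabla^f_fw]$: since $\F$ is characteristic, $\eta$ lies in $T^{-2}\C$. Its $\cV\cC$-component is by definition $-\tilde\mu^f(q,g)$. This matches the stated formula, so the content of the claim is twofold: this expression must be tensorial in $q$ and in $g$, and the $\mathrm{gr}_{-2}$-component must equal $\nabla^f_gq$.

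\textbf{The $\mathrm{gr}_{-2}$-component.} We have $q_{-2}(\eta)=\Cal L(g,-\nabla^f_fw)+q_{-2}([g,[f,w]])$. With $g=hf$, the term $[hf,[f,w]]$ equals $h[f,[f,w]]$ minus a multiple of $f$. By \eqref{def_nabla^f}, $q_{-2}([f,[f,w]])=2\Cal L(f,\nabla^f_fw)$. Therefore
\[
q_{-2}(\eta)=h\,\Cal L(f,\nabla^f_fw)=\phi_f(\nabla^f_gw)=\nabla^f_gq,
\]
by the definition of the partial connection on $\mathrm{gr}_{-2}(T\C)$ in Proposition \ref{prop_splitting_f}(3).

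\textbf{Tensoriality of $\tilde\mu^f$.} The Bott connection and the two partial connections $\nabla^f$ both satisfy the Leibniz rule. Hence the difference between $\nabla^{\mathcal B}_g$ and the operator with the off-diagonal term $\Cal L(g,v)$ is a tensor. It follows that the remaining $\cV\cC$-component is $\mathcal O$-linear in $q$ (equivalently in $w$) and in $g$. One can also check this directly: replacing $w$ by $kw$ produces the terms $[g,df(k)w]$ and $dk(f)[g,w]$. From $[f,kw]-\nabla^f_f(kw)=k([f,w]-\nabla^f_fw)$, the expression for $\xi$ is already linear in $k$, so $\eta$ changes only by $dk(g)\xi$, and $\pi_f^{\cV\cC}$ of that is $dk(g)v$, which vanishes here because $v=0$ for this representative. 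So the term is tensorial.

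For $g$, write $g=hf$. Then $[hf,\xi']=h[f,\xi']-\xi'(h)f$, and $\pi_f$ kills only $\F$-parts in the $\cV\cC$-projection, so the term is linear in $h$. For $g=f$ this gives the explicit formula $\tilde\mu^f(q,g)=-\pi_f^{\cV\cC}([g,[f,v]-\nabla^f_fv])$, written with $v=\phi_f^{-1}(q)$.

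The main obstacle is this careful bookkeeping: keeping $\F$-components modulo $\F$ and keeping the factor $\tfrac12$ from \eqref{def_nabla^f} correct in the $\mathrm{gr}_{-2}$-component.
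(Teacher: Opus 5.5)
Your proposal is correct and follows essentially the same route as the paper. You represent $\binom{q}{0}_f$ by $[f,w]-\nabla^f_fw$ with $w=\phi_f^{-1}(q)$, use \eqref{def_nabla^f} to identify the top component as $\nabla^f_gq$, use Proposition \ref{prop_splitting_f}(4) for the $\binom{0}{v}_f$ part, and take $-\tilde\mu^f(q,g)$ to be the remaining $\cV\cC$-component, tensorial by the Leibniz rules. The differences are cosmetic: you carry $g=hf$ throughout instead of reducing to $g=f$, you add the direct tensoriality check that the paper only mentions afterwards as a sanity check, and $df(k)$ should read $dk(f)$.
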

\begin{proof}
We only need to verify \eqref{formula_Bott} for $g=f$ as \eqref{formula_Bott} is
tensorial in $g$. For $v\in\cO(\cV\cC )$ one has, by definition of the Bott
connection and the Levi-bracket,
$$\nabla_f^{\mathcal B}\begin{pmatrix}0\\ v\end{pmatrix}_f=\begin{pmatrix}q_{-2}([f,v])\\ \pi_f^{\cV\cC }([f,v])\end{pmatrix}_f=\begin{pmatrix}\mathcal L(f,v)\\ \nabla^f_fv\end{pmatrix}_f\in\cO(\textrm{gr}_{-2}(T\C))\oplus \cO(\cV\cC ),$$
since $\pi_f^{\cV\cC}([f,v])=\nabla_f^f v$ by Proposition \ref{prop_splitting_f} (4).
Now, under the isomorphism \eqref{direct_sum} induced by $\pi_f$, a section
$q\in\cO(\textrm{gr}_{-2}(T\C))$ corresponds to the unique section
$\xi\in \cO(T^{-2}\C)$ such that $q_{-2}(\xi)=q$ and $\pi_f(\xi)=0$.  By
\eqref{splitting_f}, $\xi=[f,v_q]-\nabla_f^fv_q$, where
$v_q=\phi_f^{-1}(q)\in\cO(\cV\cC )$. Hence the top component of
$\nabla_f^{\mathcal B}\binom{q}{0}_f$ is given by $q_{-2}([f,[f,v_q]-\nabla_f^fv_q])$
and it follows from (2) and (3) of Proposition \ref{prop_splitting_f} that
\begin{align*}
q_{-2}([f,[f,v_q]-\nabla_f^fv_q])&=q_{-2}([f,[f,v_q]])-\mathcal L(f,\nabla^f_f v_q)
=\mathcal L(f,\nabla^f_f v_q)\\
&=\nabla_f^f\mathcal L(f,v_q)=\nabla_f^f q.
\end{align*}
The bottom component of $\nabla_g^{\mathcal B}\binom{q}{0}_f$ for an arbitrary local section $g$ of $\F$  by definition is given by
\begin{equation}\label{formula_mu_f_2}
\pi_{f}^{\cV\cC }([g,[f,v_q]-\nabla_f^fv_q])=:-\tilde\mu^f(q,g).
\end{equation}
Since $\nabla^f$ and $\nabla^{\Cal B}$ are partial connections on $T^{-2}\C/\F\cong\textrm{gr}_{-2}(T\C)\oplus\cV\cC$ , the expression \eqref{formula_mu_f_2} has to depend
tensorially on both $q$ and $g$, which proves the claim.
\end{proof}

The tensorial nature of $\tilde\mu^f$ can be of course also easily verified directly from the explicit
formula \eqref{formula_mu_f_2} in the proposition, which is a nice sanity check. Now we can relate
$\tilde\mu^f$ to the cubic torsion of $\F$ as defined in \cite{H-N}:

\begin{prop}\label{tilde_rho} 
Let $\C\subset\P TM$ be a cone structure equipped with a characteristic conic
connection $\F$. For a local nowhere vanishing section $f\in\cO(\F)$, consider the locally defined
tensor $\tilde\mu^f$ from Proposition \ref{prop_Bott}. Then we have:
\begin{enumerate}
\item For $g\in\cO(\Cal F)$, the trace-free part
$\mu^f(g)\in\cO(\emph{Hom}_0(\emph{gr}_{-2}(T\C), \cV\cC ))$ of $\tilde\mu^f(g)$,
with respect to the isomorphism $\cL:\F\otimes\cV\cC \cong\emph{gr}_{-2}(T\C)$, is
independent of the choice of nowhere vanishing section $f$ and therefore defines an
invariant of the conic connection $\F$. 
\item Denoting the tensor corresponding to the trace-free part from (1) by
$\mu_\F\in\cO(\F^*\otimes \emph{Hom}_0(\emph{gr}_{-2}(T\C), \cV\cC ))$, then under
the isomorphism
\begin{align*}
 \F^*\otimes \emph{Hom}_0(\emph{gr}_{-2}(T\C),\cV\cC)&\cong  S^2\F^*\otimes\emph{Hom}_0(\cV\cC, \cV\cC)\\
 &\cong  S^3\F^*\otimes\emph{Hom}_0(\cV\cC, \emph{gr}_{-2}(T\C)),
 \end{align*}
it corresponds to the cubic torsion $\chi_\F$ as defined in \cite{H-N}.
\end{enumerate}
\end{prop}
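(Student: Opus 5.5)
For (1), I would compute how $\tilde\mu^f$ changes under $\hat f=hf$, using the transformation rules of Proposition \ref{prop_splitting_f_2}. The cleanest route is through the invariance of the Bott connection $\nabla^{\mathcal B}$, which does not depend on $f$. So I write $\nabla^{\mathcal B}_g$ in the splitting induced by $\pi_f$ and in the one induced by $\pi_{\hat f}$, and compare the two matrix expressions \eqref{formula_Bott}.

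By \eqref{change_pi_f}, the two splittings are related as follows. A section $\binom{q}{v}_{\hat f}$ equals $\binom{q}{v+\frac{1}{2h}dh(f)\phi_f^{-1}(q)}_f$, up to sign conventions that I will fix carefully. This is a unipotent change of frame $\Phi=\left(\begin{smallmatrix}1&0\\ \beta&1\end{smallmatrix}\right)$ with $\beta=\frac{1}{2h}dh(f)\phi_f^{-1}$. By \eqref{change_nabla^f}, the partial connections shift by a scalar: $\nabla^{\hat f}_g=\nabla^f_g+\frac{1}{2h}dh(g)$ on $\cV\cC$. On $\textrm{gr}_{-2}(T\C)$ the shift is also scalar, namely $\frac{1}{2h}dh(g)$ from $\nabla^f$ plus the contribution of $\phi_{hf}=h\phi_f$, which gives a total of $-\frac{1}{2h}dh(g)$ after using compatibility.

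Conjugating $\nabla^{\mathcal B}$ by $\Phi$ and equating the lower-left entries gives
\[
\tilde\mu^{\hat f}(q,g)=\tilde\mu^f(q,g)+(\text{scalar multiples of }\phi_f^{-1}(q))+\beta\bigl(\mathcal L(g,\beta q)\bigr)+\bigl(\nabla_g\beta\bigr)(q).
\]
Every correction term has the form $c(g)\,\phi_f^{-1}(q)$, with $c$ built from $dh(f)$, $dh(g)$, $h$, and second derivatives of $h$ along $\F$. The key point is that $\nabla_g\beta$ is again a multiple of $\phi_f^{-1}$ because $\phi_f^{-1}$ is parallel up to the scalar shifts, and $\beta(\mathcal L(g,\beta q))$ is a multiple of $\phi_f^{-1}(q)$ since $\mathcal L(g,\cdot)$ is a multiple of $\phi_f$. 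Under the identification $\mathcal L:\F\otimes\cV\cC\cong\textrm{gr}_{-2}(T\C)$, the map $\phi_f^{-1}$ is (up to the factor $f^*$) the identity, i.e.\ a pure-trace element. Hence the trace-free part $\mu^f(g)$ is unchanged. This is (1), and $\mu_\F$ is well defined.

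For (2), the isomorphisms in the statement are induced by $\mathcal L$ and are canonical. So it suffices to compare $\mu_\F$ with $\chi_\F$ for one local section $f$, evaluated on $f$ itself. I would expand $\tilde\mu^f(\mathcal L(f,v),f)=-\pi_f^{\cV\cC}([f,[f,v]-\nabla^f_fv])$ using \eqref{splitting_f}. Writing $\xi=[f,[f,v]]-[f,\nabla^f_fv]$, I have $v_{q_{-2}(\xi)}=2\nabla^f_fv-\nabla^f_fv=\nabla^f_fv$ by \eqref{def_nabla^f}, and therefore
\[
\tilde\mu^f=-\Bigl(\bigl([f,[f,[f,v]]]\bigr)^{\cV}-\tfrac32\text{-type terms in }[f,[f,\nabla^f_fv]],\ \nabla^f_f\nabla^f_fv\Bigr),
\]
that is, the vertical projection of a combination of $[f,[f,[f,v]]]$, $[f,[f,\nabla^f_f v]]$ and $\nabla^f_f\nabla^f_f v$. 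This is an explicit third-order expression in $f$. I would then match it with the defining formula of $\chi_\F$ in Section 1.4 of \cite{H-N}, which (via the $w(f,v)=\nabla^f_fv$ notation of Lemma 1.26 there) is given by such a formula taken trace-free.

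The main obstacle is this last identification. It requires recalling the precise formula and normalization of $\chi_\F$ from \cite{H-N} and checking that the coefficients agree, including the overall sign and the factor relating $\F^*\otimes\textrm{Hom}_0$ to $S^3\F^*\otimes\textrm{Hom}_0$. If the coefficients differ by a nonzero constant, the statement should be read up to that normalization. Cubic symmetry in $\F$ is automatic since $\F$ has rank one.
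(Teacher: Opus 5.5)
Your argument for (1) is correct and follows the paper's route. You use that $\nabla^{\mathcal B}$ is independent of $f$, rewrite it in the $\hat f=hf$ splitting via \eqref{change_pi_f} and \eqref{change_nabla^f}, and observe that $\tilde\mu^{\hat f}(q,g)-\tilde\mu^f(q,g)$ is a functional multiple of $\phi_f^{-1}(q)$, hence pure trace. The paper only applies $\nabla^{\mathcal B}_g$ to $\binom{q}{0}_f$ instead of conjugating the whole matrix, but that is the same computation, and your sign in the change of splitting is immaterial.

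Part (2), however, has a gap. The overall approach is the same as the paper's (expand the explicit formula, then compare with the definition in \cite{H-N}), but your displayed expression for $\tilde\mu^f$ conflates two steps. With $\xi=[f,[f,v]]-[f,\nabla^f_fv]$ and $v_{q_{-2}(\xi)}=\nabla^f_fv$, formula \eqref{splitting_f} yields only second order brackets:
\[
\tilde\mu^f(\cL(f,v),f)\equiv -[f,[f,v]]+2[f,\nabla^f_fv]-\nabla^f_f\nabla^f_fv \mod \F .
\]
No term $[f,[f,[f,v]]]$ occurs in this section of $\cV\cC$. The third order terms appear only after the step you skipped. To pass to $S^3\F^*\otimes\textrm{Hom}_0(\cV\cC,\textrm{gr}_{-2}(T\C))$, where $\chi_\F$ lives, you must compose with the isomorphism $\cL(f,\cdot)=q_{-2}([f,\cdot])$; this is exactly the identification in the statement, evaluated at $f$. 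The $\F$-component is harmless, since $[f,af]$ is a section of $\F$. This gives
\[
-q_{-2}([f,[f,[f,v]]])+2q_{-2}([f,[f,\nabla^f_fv]])-\cL(f,\nabla^f_f\nabla^f_fv).
\]
Now apply \eqref{def_nabla^f} a second time, with $\nabla^f_fv$ in place of $v$. The last term becomes $-\tfrac12 q_{-2}([f,[f,\nabla^f_fv]])$, so
\[
\cL\bigl(f,\tilde\mu^f(\cL(f,v),f)\bigr)=-q_{-2}([f,[f,[f,v]]])+\tfrac32\, q_{-2}([f,[f,\nabla^f_fv]]).
\]
This is verbatim the formula defining $\tilde\chi(f)(v)$ in Section 1.4 of \cite{H-N}, with $w(f,v)=\nabla^f_fv$. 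Passing to trace-free parts then gives (2). In particular, there is no normalization constant to reconcile. Your fallback that the statement might only hold ``up to normalization'' weakens what is claimed, and it becomes unnecessary once this computation is carried out.
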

\begin{proof}
(1) This follows from the construction of $\tilde\mu^f$ and does not need the
  explicit formula \eqref{formula_mu_f_2}.  For $q\in\mathcal{O}(\textrm{gr}_{-2}(T\C))$, the element
  $\xi=\binom{q}{0}_f$ is characterized by $q_{-2}(\xi)=q$ and
  $\pi_f^{\cV\cC}(\xi)=0$. Thus formula \eqref{change_pi_f} implies that for
  $\hat f=hf$, we get $\xi=\binom{q}{-\frac{1}{2h}dh(f)v}_{\hat f}$, where $q=\Cal
  L(f,v)$. Now \eqref{formula_Bott}, \eqref{change_pi_f} and the fact that
  $\nabla^f_g\Cal L(f,v)=\Cal L(f,\nabla^f_gv)$ show that
  $$
\nabla^{\Cal B}_g\begin{pmatrix} q \\ 0\end{pmatrix}_f=\begin{pmatrix} \nabla^f_g q
\\ -\tilde\mu^f(g,q) \end{pmatrix}_f=\begin{pmatrix} \nabla^f_g q
\\ -\tilde\mu^f(g,q)-\frac{1}{2h}dh(f)\nabla^f_g v \end{pmatrix}_{\hat f}.
$$
But this must be also equal to applying \eqref{formula_Bott} directly to $\binom{q}{-\frac{1}{2h}dh(f)v}_{\hat f}$,
which shows that the bottom component of the above expression has to equal
\begin{align*}
&-\nabla^{\hat f}_g(\frac{1}{2h}dh(f)v)-\tilde\mu^{\hat f}(g,q)=\\
&\quad\quad\quad=-g\cdot (\frac{1}{2h}df(h)) v-\frac{1}{2h}dh(f)\nabla^{\hat
  f}_g v-\tilde\mu^{\hat f}(g,q). 
\end{align*}
Using \eqref{change_nabla^f}, we conclude that
$\tilde\mu^f(g,q)$ and $\tilde\mu^{\hat f}(g,q)$ differ by some functional multiple of $v$,
which implies the claimed statement.

\smallskip
(2)
 This uses the explicit formula for $\tilde\mu^f$ from Proposition
\ref{prop_Bott}. For a local nowhere vanishing section $f\in\cO(\F)$, and $q=\Cal
L(f,v)\in \cO(\textrm{gr}_{-2}(T\C))$ we have
\begin{align*}
\tilde\mu^f(q,f)&=-\pi_f^{\cV\cC }([f,[f,v]-\nabla^f_fv]])\\
&\equiv -[f[f,v]]+2[f,\nabla_f^fv]-2\nabla_f^f\nabla_f^f v+\nabla_f^f\nabla_f^f v\mod \F\\
&\equiv  -[f[f,v]]+2[f,\nabla_f^fv]-\nabla_f^f\nabla_f^f v \mod \F.
\end{align*}
Hence, \eqref{def_nabla^f} implies
\begin{align*}
\mathcal L(f, \tilde\mu^f(q,f))&=-q_{-2}([f,[f,[f,v]]])+2q_{-2}([f,[f,\nabla_f^fv]])-q_{-2}([f,\nabla_f^f\nabla_f^f v])\\
&=-q_{-2}([f,[f,[f,v]]])+2q_{-2}([f,[f,\nabla_f^fv]])-\mathcal L(f,\nabla_f^f\nabla_f^f v)\\
&=-q_{-2}([f,[f,[f,v]]])+\frac{3}{2}q_{-2}([f,[f,\nabla_f^fv]]).
\end{align*}
This is exactly the formula used to define $\tilde\chi(f)(v)$ in Section 1.4 of \cite{H-N}, since
$\nabla_f^f v$ by definition coincides with the quantity $w(f,v)$ used there. 
\end{proof}

\begin{rem}
(1) Formally, the first part of Proposition \ref{tilde_rho} would not be really
  necessary, since it follows from the second part and the results in
  \cite{H-N}. Conceptually, it is very important, however, since it shows that
  considerations about the Bott connection and its relations to the structures
  induced by a local non-vanishing section of $\F$ provides a geometric way to obtain
  the cubic torsion (which was defined via a formula in \cite{H-N}) as an invariant
  of a characteristic conic connection.

  (2) In the setting of Proposition \ref{prop_Bott}, having chosen a local non-vanishing
  section $f\in\cO(\mathcal F)$, the maps $\pi_f$ and $\phi_f$ as defined in
  Proposition \ref{prop_splitting_f} induce also an isomorphism
\begin{equation*}
T^{-2}\C/\F\cong\textrm {gr}_{-2}(T\C)\oplus\cV\cC \cong \cV\cC \oplus\cV\cC .
\end{equation*}
With respect to this isomorphism, any section of $T^{-2}\C/\F$ can be written as
$\begin{pmatrix}w\\ v\end{pmatrix}_f$ for sections $v,w\in\cO(\cV\cC )$ and
  Propositions \ref{tilde_rho} and \ref{prop_Bott} imply that the Bott connection is
  given by
\begin{equation}\label{formula_Bott_2}
\nabla^\mathcal B_g \begin{pmatrix}w\\ v\end{pmatrix}_f=\begin{pmatrix}\nabla^f_g w
  +gf^{-1}v\\ \nabla^f_gv-gf^{-1}\phi_f^{-1}(\tilde\chi(f)(w))\end{pmatrix}_f\in\begin{matrix}\cO(\cV\cC
  )\\\oplus\\\cO(\cV\cC )\end{matrix},
\end{equation}
for any local section $g\in\cO(\F)$.
\end{rem}

Now we can derive a geometric interpretation for the vanishing of the cubic torsion of
a characteristic conic connection. 

\begin{thm}\label{geom_interpr_cubic_tor} 
  Suppose $\C\subset \P TM$ is a cone structure equipped with characteristic conic
  connection $\F$. Denote by $N$ a local leaf space of $\F$ and by
  $\mathcal D\subset TN$ the distribution induced by $T^{-2}\C/\F$.  The split
  quaternionic structure $\mathcal Q\subset \emph{End}(T^{-2}\C/\F)$ on $T^{-2}\C/\F$
  defined as in Proposition \ref{para-q-str} descends to a split quaternionic
  structure $\mathcal Q_{\mathcal D}\subset \emph{End}(\mathcal D)$ on $\mathcal D$
  if and only if $\chi_\F=0$. In this case, the distribution $\mathcal D$ inherits a
  canonical Segre structure and hence a natural local isomorphisms to a tensor
  product of auxiliary bundles of rank $2$ and $\emph{rank}(\cV\cC)$, respectively.
\end{thm}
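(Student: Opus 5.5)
The plan is to reduce descent of $\mathcal Q$ to a parallelism condition for the Bott connection. Sections of $\mathcal D$ over the local leaf space $N$ correspond to sections of $T^{-2}\C/\F$ that are parallel for $\nabla^{\mathcal B}$ along $\F$; this is the standard fact from foliation theory, since $\nabla^{\mathcal B}_f\bar q(\xi)=\bar q([f,\xi])$ and projectable vector fields are those whose brackets with $\F$ lie in $\F$. Likewise, $\nabla^{\mathcal B}$ induces a partial $\F$-connection on $\textrm{End}(T^{-2}\C/\F)$, and a subbundle of $\textrm{End}(T^{-2}\C/\F)$ descends to $\textrm{End}(\mathcal D)$ exactly when it is preserved by this induced partial connection. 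Since the leaves of $\F$ are one-dimensional and we work locally on simply connected leaf neighborhoods, parallel transport along the leaves is well defined and this characterization is straightforward. So $\mathcal Q$ descends if and only if $\nabla^{\mathcal B}_g$ maps $\cO(\mathcal Q)$ into $\cO(\mathcal Q)$ for all $g\in\cO(\F)$. The descended object is then automatically a split quaternionic structure, since the local generators can be taken parallel up to a $\mathcal Q$-valued correction and the algebra relations are pointwise.

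The next step is to test this with the generators. Fix $f$ and use the identification $T^{-2}\C/\F\cong\cV\cC\oplus\cV\cC$ of Remark (2), in which $I_f=\left(\begin{smallmatrix}1&0\\0&-1\end{smallmatrix}\right)$ and $J_f=\left(\begin{smallmatrix}0&1\\1&0\end{smallmatrix}\right)$ (up to the ordering convention of Lemma \ref{lem:IJK}). I write formula \eqref{formula_Bott_2} with $g=f$ as $\nabla^{\mathcal B}_f=\nabla^f\oplus\nabla^f+A$, where
$$A=\begin{pmatrix}0&\id\\ -\Phi&0\end{pmatrix},\qquad \Phi:=\phi_f^{-1}\circ\tilde\chi(f).$$
The diagonal part $\nabla^f\oplus\nabla^f$ kills $I_f$, $J_f$ and $K_f$, since these are built from identities. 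Hence $\nabla^{\mathcal B}_f X=[A,X]$ for $X\in\{I_f,J_f,K_f\}$.

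Computing commutators gives
$$[A,I_f]=\begin{pmatrix}0&-2\\-2\Phi&0\end{pmatrix},\qquad [A,J_f]=\begin{pmatrix}1+\Phi&0\\0&-1-\Phi\end{pmatrix}.$$
The subbundle $\mathcal Q_x$ consists of the block matrices $\left(\begin{smallmatrix}a&b\\c&-a\end{smallmatrix}\right)\otimes\id$ with scalar entries. Thus preservation of $\mathcal Q$ holds exactly when $\Phi$ is pointwise a scalar multiple of $\id_{\cV\cC}$. By Proposition \ref{tilde_rho}, $\Phi$ corresponds to $\tilde\mu^f(f)$, so being a scalar multiple of the identity means precisely that its trace-free part $\mu_\F(f)$ vanishes, i.e.\ $\chi_\F(f,f,f)=0$. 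Because $\chi_\F$ is cubic in the line bundle $\F$, this is equivalent to $\chi_\F=0$.

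The main obstacle is the bookkeeping. I must check that the trace-free part taken in Proposition \ref{tilde_rho}, defined via $\cL:\F\otimes\cV\cC\cong\textrm{gr}_{-2}(T\C)$, matches the identification used to write $\Phi$ as an endomorphism of $\cV\cC$. I must also check that the conventions of Lemma \ref{lem:IJK}, which assign $\psi=\phi_f^{-1}$ and fix the order of the components, produce the block forms used above. A sign or ordering mismatch would only change which entry carries $\Phi$, not the scalar criterion.

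For the final statement, Lemma \ref{lem-split-quat} applied to $\mathcal Q_{\mathcal D}$ gives the Segre structure on $\mathcal D$. The tensor product decomposition then follows from the preceding remark on Segre structures, with auxiliary bundles of rank $2$ and $n=\textrm{rank}(\cV\cC)$.
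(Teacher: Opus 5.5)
Your proposal is correct and follows the paper's proof. You reduce descent of $\mathcal Q$ to its invariance under the Bott connection, and then use \eqref{formula_Bott_2} to show that $\nabla^{\mathcal B}_f$ preserves the span of $I_f,J_f,K_f$ exactly when $\phi_f^{-1}\circ\tilde\chi(f)$ is a multiple of the identity, i.e.\ when $\chi_\F=0$. Splitting off $\nabla^f\oplus\nabla^f$ and computing commutators with $A$ is a tidier packaging of the paper's direct computation, and you spell out the foliation-theoretic reduction that the paper leaves implicit; your $I_f$ is the negative of the paper's $\mathrm{diag}(-1,1)$, which is harmless.
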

\begin{proof}
  We will show that the Bott connection $\nabla^{\mathcal B}$ preserves $\mathcal Q$
  if and only if $\chi_\F=0$, which proves the claim. Suppose $f$ is a local
  non-vanishing section of $\F$. With respect to the induced isomorphism
  $T^{-2}\C/\F\cong \cV\cC \oplus\cV\cC $, the endomorphisms $I_f$, $J_f$ and
  $K_f=I_f\circ J_f$ correspond to
\begin{equation*}
I_f=\begin{pmatrix} -1 &0\\
0&1
\end{pmatrix}, \quad J_f=\begin{pmatrix} 0 &1\\
1&0
\end{pmatrix}, \, \textrm{ and }\, K_f=\begin{pmatrix} 0 &-1\\
1&0
\end{pmatrix}. 
\end{equation*}
By \eqref{formula_Bott_2}, we therefore have 
\begin{align*}
(\nabla_f^{\mathcal B} I_f)\begin{pmatrix}w\\ v\end{pmatrix}_f&=\begin{pmatrix}-\nabla^f_f w +v\\ \nabla^f_fv+\phi_f^{-1}(\tilde\chi(f)(w))\end{pmatrix}_f-\begin{pmatrix}-\nabla^f_f w -v\\ \nabla^f_fv-\phi_f^{-1}(\tilde\chi(f)(w))\end{pmatrix}_f\\
&=2\begin{pmatrix} v\\ \phi_f^{-1}(\tilde\chi(f)(w))\end{pmatrix}_f
\end{align*}
\begin{align*}
(\nabla_f^{\mathcal B} J_f)\begin{pmatrix}w\\ v\end{pmatrix}_f&=\begin{pmatrix}\nabla^f_f v +w\\ \nabla^f_fw-\phi_f^{-1}(\tilde\chi(f)(v))\end{pmatrix}_f-\begin{pmatrix}\nabla^f_fv-\phi_f^{-1}(\tilde\chi(f)(w)\\ \nabla_f^f w+v\end{pmatrix}_f\\
&=\begin{pmatrix} w+\phi_f^{-1}(\tilde\chi(f)(w))\\ -v-\phi_f^{-1}(\tilde\chi(f)(v))\end{pmatrix}_f
\end{align*}
\begin{align*}
(\nabla_f^{\mathcal B} K_f)\begin{pmatrix}w\\ v\end{pmatrix}_f&=\begin{pmatrix}-\nabla^f_f v +w\\ \nabla^f_fw+\phi_f^{-1}(\tilde\chi(f)(v))\end{pmatrix}_f-\begin{pmatrix}-\nabla^f_fv+\phi_f^{-1}(\tilde\chi(f)(w)\\ \nabla_f^f w+v\end{pmatrix}_f\\
&=\begin{pmatrix} w-\phi_f^{-1}(\tilde\chi(f)(w))\\ -v+\phi_f^{-1}(\tilde\chi(f)(v))\end{pmatrix}_f
\end{align*}

If $\chi_\F=0$, then $\tilde\chi(f):\cV\cC \rightarrow\textrm{gr}_{-2}(T\C)$ is pure
trace, which means it is a functional multiple of
$\phi_f:\cV\cC \rightarrow\textrm{gr}_{-2}(T\C)$, and hence
$\nabla_f^{\mathcal B} I_f$, $\nabla_f^{\mathcal B} J_f$ and
$\nabla_f^{\mathcal B} K_f$ are evidently in the span of $I_f$, $J_f$ and
$K_f$. Conversely, assume $\nabla_f^{\mathcal B} I_f$, $\nabla_f^{\mathcal B} J_f$
and $\nabla_f^{\mathcal B} K_f$ are in the span of $I_f$, $J_f$ and $K_f$. In
particular, there must be functions $a,b,c$ such that
\begin{equation*}
  (\nabla_f^{\mathcal B} I_f)\begin{pmatrix}w\\ 0\end{pmatrix}_f=
\begin{pmatrix} 0\\ 2\phi_f^{-1}(\tilde\chi(f)(w))\end{pmatrix}_f=
\begin{pmatrix} -a &b\\
c&a
\end{pmatrix}
\begin{pmatrix} w\\
0
\end{pmatrix}_f=\begin{pmatrix} -aw\\
cw
\end{pmatrix}_f,
\end{equation*}
for any section $w\in\cO(\cV\cC )$, which implies that $\tilde\chi(f):\cV\cC
\rightarrow\textrm{gr}_{-2}(T\C)$ has to be pure trace.
\end{proof}

\begin{rem}
Note that all the notions and results of Sections 2 hold also in the smooth category,
i.e. for smooth cone structures on smooth manifolds and smooth characteristic conic
connections.
\end{rem}

\section{Conic connections induced by principal connections}
To any isotrivial cone structure, as defined in Definition \ref{def_cone_str} (3),
one can associate a first order $G$-structure and any principal connection on this
$G$-structure induces a conic connection on the given cone structure
\cite{Hwang-Li-2,H-N}.  In this section we will recall that relation and study, in
the case of isotrivial cone structures with homogeneous fibers, how the
characteristic and cubic torsion of a conic connection that is induced from a
connection on the associated $G$-structure is related to the two basic local
invariants of such a connection, namely its torsion and curvature.
\subsection{The associated $G$-structure of an isotrivial cone structure}\label{sec_assoc_G-structure}
Let $\C\subset\P TM$ be a $Z$-isotrivial cone structure on a complex manifold $M$ for
a closed complex submanifold $Z\subset\P W$, where $W$ is a vector space with
$\dim(W)=\dim(M)$, see Definition \ref{def_cone_str} (3). To such a structure, we can
associate a first-order $G$-structure on $M$ with structure group
$$
\textrm{Aut}(\widehat Z)=\{g\in\textrm{GL}(W): g\widehat Z=\widehat
Z\}\subset\textrm{GL}(W)$$
as follows, cf.\,\cite{Hwang-Li-2,H-N}: we can view the frame bundle $\pi:
FM\rightarrow M$ of $M$ as a principal $\textrm{GL}(W)$-bundle, where the fiber over
a point $x\in M$ is given by
 \begin{equation}\label{fiber_frame_bdl}
 F_xM=W^*\otimes T_xM=\textrm{Hom}(W,T_xM),
 \end{equation}
 on which $\textrm{GL}(W)$ acts from the right by pre-composition.  For $x\in M$,
 set $$\mathcal P_x:=\{\phi\in\textrm{Hom}(W,T_xM): \phi(\widehat
 Z)=\widehat\C_x\}\subset F_xM.$$ Then the disjoint union
\begin{equation}\label{assoc_G-structure}
\mathcal P:=\bigsqcup_{x\in M} \mathcal P_x\subset FM
\end{equation}
defines a principal $\textrm{Aut}(\widehat Z)$-subbundle of $FM$, that is, a
reduction of structure group of the frame bundle of $M$ to the subgroup
$\textrm{Aut}(\widehat Z)\subset\textrm{GL}(W)$.
\begin{defin} Suppose $\C\subset \mathbb P TM$ is a $Z$-isotrivial cone structure on
  $M$ for some closed submanifold $Z\subset \mathbb P W$ and set
  $G:=\textrm{Aut}(\widehat Z)\subset\textrm{GL}(W)$. Then
\[
\begin{tikzcd}
    \mathcal P \arrow[hookrightarrow]{rr}{\iota} \arrow[swap]{dr}{} & & FM \arrow{dl}{\pi} \\
    & M 
\end{tikzcd}
\]
 as defined in \eqref{assoc_G-structure} is referred to as the $G$-structure
 associated to $\mathcal C\subset \mathbb PTM$. By its definition, $\C$ and $\mathbb
 P TM$ are then associated bundles
 \begin{equation}\label{assoc_cone}
 \C\cong \mathcal P\times_G Z\subset \mathcal P\times_G \mathbb P W\cong \mathbb
 PTM
 \end{equation}
 to the principal $G$-bundle $\mathcal P$ corresponding to the action of $G$ on
 $\mathbb P W$ respectively to its restriction to $Z$.
\end{defin}

Conversely, having fixed a subvariety $Z\subset \mathbb P W$, any $G$-structure
$\mathcal P\subset FM$ with structure group
$\textrm{Aut}(\widehat Z)\subset \textrm{GL}(W)$ induces an $Z$-isotrivial cone
structure by forming the associate bundle as in \eqref{assoc_cone}.  Observe
that different subvarieties of $\mathbb PW$ may lead to the same subgroup of
automorphisms in $\textrm{GL}(W)$. Once $Z\subset\P W$ is fixed, however, there is a
natural bijective correspondence between $G$-structures with structure group
$\textrm{Aut}(\widehat Z)$ and $Z$-isotrivial cone structures.

\subsection{Review of connections on $G$-structures}\label{rev_princ_conn}
Suppose $M$ is a manifold and $W$ a vector space of dimension $\dim M$. View the frame bundle $\pi:FM\rightarrow M$ as being modeled on $W$ as in \eqref{fiber_frame_bdl}, which 
is encoded by a $W$-valued $1$-form $\Theta\in\Omega^1(FM, W)$, given by
$$\Theta(\xi_u)=u^{-1}(T_u\pi \xi_u)\quad\forall \xi_u\in T_uFM,$$
called the \emph{soldering form}. It is $\textrm{GL}(W)$-equivariant and
$\ker(\Theta(u))=\ker(T_u\pi)$ for all $u\in FM$.

Let $G\subset \textrm{GL}(W)$ be a closed subgroup and
$\iota:\mathcal P\hookrightarrow FM$ be a reduction of structure group of the frame
bundle $FM$ to $G$.  Denote by $$\theta:=\iota^*\Theta\in\Omega^1(\cP, W)$$ the
soldering form on $\cP$, which, by construction, is $G$-equivariant and satisfies
$\ker(\theta(u))=\ker(T_u(\pi\circ\iota))$ for all $u\in \cP$.  Moreover, we will
denote by $\mathcal V\mathcal P\subset T P$ the vertical bundle of
$\mathcal P\rightarrow M$ and for any $X\in \g$, the Lie algebra of $G$, by
$\zeta_X\in\mathfrak X(\cP)$ the vertical vector field generated by $X$, so
$\zeta_X(u):=\frac{d}{dt}\vert_{t=0} u\cdot\exp(tX)$ for any $u\in\mathcal P$. Recall
that mapping $(u,X)\in\cP\times\mathfrak g$ to $\zeta_X(u)$ defines a canonical
trivialization
\begin{equation}\label{trivialization_VP}
\mathcal P\times \g\cong \cV\cP
\end{equation}
of the vertical bundle $\cV\cP\rightarrow\cP$.

\begin{defin} A \emph{connection on a $G$-structure} $\iota:\cP\hookrightarrow FM$ is
  a principal connection on the principal $G$-bundle $\mathcal P$, i.e.\  
a (horizontal) distribution $\mathcal H\mathcal P\subset T\mathcal P$ on $\mathcal
P$ complimentary to $\cV\cP$,
\begin{equation}\label{principal_conn}
T\mathcal P=\mathcal V\mathcal P\oplus \mathcal H\mathcal P,
\end{equation}
such that $T_ur^g (\mathcal H_u\mathcal P)= \mathcal H_{ug}\mathcal P$ for all $u\in
P$ and $g\in G$, where $r^g:\cP\rightarrow \cP$ denotes the principal right-action of
$g\in G$ on $\cP$.
\end{defin}
Via \eqref{trivialization_VP}, the vertical projection $T\cP\rightarrow\cV\cP$
encoding a principal connection as in \eqref{principal_conn}, can be seen as
$G$-equivariant $\g$-valued $1$-form $\gamma\in\Omega^1(\cP,\g)$ such that
$\gamma(\zeta_X)=X$ for all $X\in\g$.

Recall that for a principal $G$-bundle $\cP\to M$ and a $G$-action on a smooth
manifold $S$, one can form the associated fiber bundle $\cP\times_G S\rightarrow M$
with typical fiber $S$. Starting from a representation of $G$ on a vector space $E$,
the bundle $\mathcal {E}:=\cP\times_G E\to M$ naturally is a vector bundle. It is
well known that a principal connection $\gamma$ on $\cP$ induces a linear connection
on $\mathcal E$. This linear connection can be equivalently encoded by a horizontal
subbundle in $T\mathcal E$ which turns out to be the image of the horizontal
subbundle $\Cal H\cP\subset T\cP$ under the natural map $\cP\x E\to \mathcal E$. The
latter point of view extends to general associated bundles, so on $\cP\x_G S$ one
obtains a horizontal distribution which is complementary to the vertical
subbundle. Hence, one obtains what is called an \emph{Ehresmann connection} on the
fiber bundle $\cP\x_G S\to M$.

For a $G$-structure $\cP\hookrightarrow F M$ corresponding to $G\subset \textrm{GL}(W)$, the associated vector bundle
$\cP\x_G W$ by definition is the tangent bundle $TM$. Hence, any connection $\gamma$ on $\cP$ induces a linear connection on $TM$ and so it also specifies a
distinguished class of curves on $M$, given by the geodesics of this induced
connection. We refer to them as the \emph{geodesics of $\gamma$}.

For later use, let us also remark, that a connection $\gamma\in\Omega^1(\cP, \g)$ on
a $G$-structure $\cP\hookrightarrow F M$ can be naturally extended via the
soldering form to a \emph{Cartan connection} on $\cP$ of type $(\tilde G, G)$, where
$\tilde G:=W\rtimes G$ it the \textit{affine extension} of $G$:  Indeed, $\tilde
\g=W\oplus\g$ as a $G$-representation and $G$-equivariancy of $\theta$ and $\gamma$
implies $G$-equivariancy of
\begin{equation*}
\omega=\theta+\gamma\in\Omega^1(\cP, W\rtimes \g).
\end{equation*}
Moreover, $\ker(\theta(u))=\cV_u\cP$ implies that $\omega(\zeta_X)=\gamma(\zeta_X)=X$
for any $X\in\g$ and that $\omega(u): T_u\cP\rightarrow\tilde \g$ is an isomorphism
for all $u\in \cP$.  Therefore,  $\omega$ is indeed a Cartan connection of type
$(\tilde G,G)$ on $\cP$. The \emph{curvature of the Cartan connection} $\omega$ is given by
the $G$-equivariant $\tilde \g$-valued $2$-form
\begin{equation}\label{def_Cartan_curv}
K(\xi,\eta)=d\omega(\xi,\eta)+[\omega(\xi),\omega(\eta)] \quad\textrm{ for }
\xi,\eta\in\cO(T\cP).
\end{equation}
By $G$-equivariancy, we may decompose $K$ into an $W$-valued component, which equals the torsion of $\gamma$, given by
\begin{equation}\label{torsion_gamma}
\tau(\xi,\eta)=d\theta(\xi,\eta)+\gamma(\xi)(\theta(\eta))-\gamma(\eta)(\theta(\xi)),
\end{equation}
and a $\g$-valued component, which equals the curvature of $\gamma$,
given by
\begin{equation}\label{curv_gamma}
\rho(\xi,\eta)=d\gamma(\xi,\eta)+[\gamma(\xi),\gamma(\eta)]. 
\end{equation}
The defining properties of a Cartan connection easily imply that the curvature form
$K$ is horizontal and $G$-equivariant, hence the same is true for $\tau$ and
$\rho$. Therefore, these forms descend to two-forms on $M$ with values in the
associated bundles $\cP\x_GW=TM$ and $\cP\x_G\g$, respectively. Viewed as an element
of $\Omega^2(M,TM)$, $\tau$ is the torsion of the induced linear connection on
$TM$. For the other component, we observe that $\g\subset \textrm{Hom}(W,W)$ so
$\rho$ descends to a two-form with values in $\textrm{Hom}(TM,TM)$ and in this
interpretation it is the curvature of the induced linear connection on $TM$.

\subsection{Conic connections induced by principal connections}
Let $\C\subset \mathbb P TM$ be an $Z$-isotrivial cone structure for some $Z\subset
\mathbb P W$ and $\iota:\mathcal P\hookrightarrow FM$ the associated $G$-structure as
defined in Section \ref{sec_assoc_G-structure}.  Suppose
$\gamma\in\Omega^1(\cP,\mathfrak g)$ is a principal connection on $\cP$ with
corresponding horizontal distribution $\cH\cP$. Then, as discussed in Section \ref{rev_princ_conn}, $\gamma$ induces an Ehresmann
connection on $\cC\cong \cP\times _GZ$, that is, a decomposition
\begin{equation}
T\cC=\cV\cC\oplus\cH\cC.
\end{equation}
In particular, on the subbundles $T^{-1}\C\subset T^{-2}\C\subset T\C$ as defined in \eqref{natural_distributions_C}, we get induced decompositions
\begin{equation}
T^{-1}\cC=\cV\cC\oplus \cH^{-1}\cC\quad\textrm{ and }\quad
T^{-2}\cC=\cV\cC\oplus \cH^{-2}\cC,
\end{equation}
where $\cH^{-i}\cC:=\cH\cC\cap T^{-i}\cC$ for $i=1,2$. Note that
$\F^\gamma:=\cH^{-1}\cC$ defines a conic connection on $\cC$, which we will refer to
as the \emph{conic connection on $\cC$ induced by $\gamma$}. It corresponds to the
$\gamma$-geodesics in directions that lie in $\cC\subset\P TM$ (which stay in $\cC$
by construction).

\subsection{Isotrivial cone structures with homogeneous fibers}
We now specialize to the case that $G:=\textrm{Aut}(\widehat Z)$ acts transitively on
$Z$. So let $\cC\subset \P TM$ be a $Z$-isotrivial cone structure on a manifold $M$,
where $G$ acts transitively on $Z$. Fixing a base point $z_0\in Z$, we get an
identification $Z\cong G/H$, where $H\subset G$ is the stabilizer of $z_0$. Denoting
by $(\pi_M:\cP\rightarrow M,\theta)$ the associated $G$-structure, we have that
$$\cC\cong\cP\times_G Z\cong \cP\times_G G/H=\cP/H,$$ so the natural projection
$\pi_\cC: \cP\rightarrow \cP/H=\cC$ makes $\cP$ a principal $H$-bundle over $\cC$
such that $p\circ \pi_\cC=\pi_M$. This brings us into the general setting of
correspondence spaces, see \cite{Cap_Corresp}. In particular, one easily verifies:

\begin{prop}\label{Corr_space_prop} 
Suppose that $\gamma\in\Omega^1(\cP,\mathfrak g)$ is a connection on the
$G$-structure $(\pi_M: \cP\rightarrow M, \theta)$ and denote by
$\omega=\theta+\gamma\in\Omega^{1}(\cP, \tilde \g)$ the associated Cartan connection
of type $(\tilde G=W\rtimes G, G)$.
\begin{enumerate}
\item Then $\omega$ is a Cartan connection of type $(\tilde G, H)$ on $\pi_\cC:
  \cP\rightarrow \cP/H=\cC$. So $\cC$ carries a canonical Cartan geometry, which is
  called the correspondence space of the Cartan geometry $(\cP\rightarrow M, \omega)$
  with respect to $H\leq G$ in the terminology of \cite{Cap_Corresp}.
 \item Via the two interpretations as a Cartan connection, $\omega$ induces vector
   bundle isomorphisms
 \begin{equation}\label{omega_ind_isos}
T\cC\cong \cP\times_H \tilde \g/\h\quad\textrm{ and }\quad TM\cong \cP\times_G \tilde \g/\g\cong\cP\times_G W,
\end{equation}
where the second isomorphism is independent of $\gamma$ but only depends on $\theta$.
 \end{enumerate}
 Moreover, under the isomorphisms \eqref{omega_ind_isos}, the tangent map
 $Tp:T\cC\rightarrow TM$ of $p:\cC\rightarrow M$ corresponds to the natural
 projection $\tilde \g/\h\to \tilde \g/\g$. In particular, we have $\cV\cC \cong
 \cP\times_H \g/\h$.
\end{prop}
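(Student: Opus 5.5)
The plan is to check the three parts of Proposition \ref{Corr_space_prop} one at a time. Each follows from the corresponding property of $\omega$ as a Cartan connection of type $(\tilde G,G)$ on $\pi_M:\cP\to M$, which was established at the end of Section \ref{rev_princ_conn}.

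For (1), I first note that $\pi_\cC:\cP\to\cP/H=\cC$ is a principal $H$-bundle. Since $G\to G/H\cong Z$ is a principal $H$-bundle and $\cP\to M$ is locally trivial, this holds because $\cP\times_G(G/H)\cong\cP/H$. The Cartan connection axioms for the smaller group are then inherited from those for $G$.
\begin{itemize}
\item Each $\omega(u):T_u\cP\to\tilde\g$ is a linear isomorphism. This is already known and does not involve the structure group.
\item $H$-equivariance, $(r^h)^*\omega=\operatorname{Ad}(h^{-1})\circ\omega$ for $h\in H$, is the restriction of $G$-equivariance to $H\subset G$.
\item The reproduction of fundamental fields, $\omega(\zeta_X)=X$ for $X\in\h$, is the restriction of the same identity for $X\in\g$. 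Here I use that the fundamental vector fields of the $H$-action on $\cP$ are just the $\zeta_X$ with $X\in\h$.
\end{itemize}

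For (2), I will use the standard fact that for any Cartan geometry $(\cP\to N,\omega)$ of type $(\tilde G,K)$, the map $\cP\times\tilde\g/\k\to TN$ given by $(u,X+\k)\mapsto T\pi(\omega(u)^{-1}(X))$ induces an isomorphism $\cP\times_K\tilde\g/\k\cong TN$. Equivariance makes it well defined. The kernel of $T\pi\circ\omega(u)^{-1}$ is exactly $\k$, because $\ker T_u\pi$ consists of the fundamental vectors. Comparing dimensions then gives bijectivity.

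Applying this with $K=H$ yields $T\cC\cong\cP\times_H\tilde\g/\h$. Applying it with $K=G$ yields $TM\cong\cP\times_G\tilde\g/\g$. Under $\tilde\g/\g\cong W$ the second map becomes $(u,w)\mapsto T\pi_M(\xi)$ for any $\xi$ with $\theta(\xi)=w$, since the $W$-component of $\omega$ is $\theta$. By the definition of the soldering form, $T\pi_M(\xi)=u(\theta(\xi))=u(w)$. So the isomorphism is the tautological one $\cP\times_GW\cong TM$, and it depends only on $\theta$, not on $\gamma$.

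For the final claims, I compute $Tp$ directly. Take a class $[u,X+\h]$ with $\xi=\omega(u)^{-1}(X)$. Using $p\circ\pi_\cC=\pi_M$,
$$Tp\,(T\pi_\cC\,\xi)=T\pi_M\,\xi,$$
and the right-hand side is the class $[u,X+\g]$. So under the two isomorphisms, $Tp$ is induced by the projection $\tilde\g/\h\to\tilde\g/\g$. This formula is a priori in terms of the $H$-bundle on both sides, so one should note that $\cP\times_H\tilde\g/\g\to\cP\times_G\tilde\g/\g$ is the pullback identification $p^*TM$; this is the only point needing a moment's care. The kernel of $\tilde\g/\h\to\tilde\g/\g$ is $\g/\h$, which gives $\cV\cC\cong\cP\times_H\g/\h$.

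None of these steps is a real obstacle. The argument is bookkeeping, and the part most prone to error is matching associated bundles for the two different structure groups in the last step.
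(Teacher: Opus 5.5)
Your proposal is correct. It is essentially the paper's approach: the paper gives no written proof beyond ``one easily verifies'' within the general framework of correspondence spaces, and your checks supply exactly those routine details. Namely, you restrict the Cartan connection axioms from $G$ to $H$, use the standard isomorphism $TN\cong\cP\times_K\tilde\g/\mathfrak k$, identify the $W$-component of $\omega$ with $\theta$ to get $\gamma$-independence, and use $p\circ\pi_\cC=\pi_M$ to identify $Tp$ with the projection $\tilde\g/\h\to\tilde\g/\g$.
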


In the setting of Proposition \ref{Corr_space_prop}, since
 $\tilde \g\cong W\oplus\g$ as $G$-module, in particular as $H$-module, we have
\begin{equation}\label{decom_rep}
 \tilde \g/\h\cong W\oplus\g/\h 
 \end{equation}
 as $H$-module. Via the first isomorphism in \eqref{omega_ind_isos}, $\cP\times_H W$ defines a
 subbundle of $T\cC$ complimentary to $\cV\cC$, which equals
   
\begin{equation}\label{HC=W}
 \mathcal H\cC\cong\cP\times_H W\subset T\cC,
 \end{equation}
 by definition of the Ehresmann connection $\mathcal H\cC$ on $p:\cC\rightarrow M$
 induced by $\gamma$. Moreover, setting  $W^{-1}:=\hat z_0$ and $W^{-2}:=\widehat T_{z_0} Z=[\g, \hat z_0]$, defines
 an $H$-invariant filtration 
  \begin{equation}\label{filtration_W}
 W^{-1}\subset W^{-2}\subset W,
 \end{equation}
 which is the beginning of the osculating filtration of $Z\subset \mathbb P W$
 determined by $z_0\in Z$ as briefly discussed in Remark \ref{comparison_filt_rem}.
 For later use, we also set $\tilde \g^{-i}:=W^{-i}\oplus\g$ for $i=1,2$ and
 $\tilde \g^0:=\h$, yielding an $H$-invariant filtration
 \begin{equation}\label{filtration_tilde g}
 \tilde\g^0\subset \tilde\g^{-1}\subset \tilde\g^{-2}\subset\tilde\g,
 \end{equation}
 and an $H$-invariant filtration on $\tilde \g/\h=W\oplus \g/\h$. Also, note that the Lie bracket on $\tilde \g$ respectively the action of $\g$ on $W$, gives rise to an 
 $H$-equivariant bilinear map
 \begin{equation}\label{alg_bracket}
 \g/\h\times W^{-1}\rightarrow W^{-2}/W^{-1}.
\end{equation}

 \begin{lem} The first isomorphism in \eqref{omega_ind_isos} identifies
 \begin{equation*}
T^{-1}\C= \cH^{-1}\C\oplus\cV\cC=\F^\gamma\oplus\cV\cC\cong \cP\times_H (W^{-1}\oplus \g/\h)
 \end{equation*}
  \begin{equation*}
T^{-2}\C= \cH^{-2}\C\oplus\cV\cC\cong \cP\times_H (W^{-2}\oplus \g/\h).
 \end{equation*}
 Moreover, the map  
 \begin{equation}\label{algebraic_bracket}
\{_-,_-\}: \cV\cC\otimes \F^\gamma \rightarrow  \emph{gr}_{-2}(T\C),
\end{equation}
induced by \eqref{alg_bracket} coincides with the Levi bracket as defined in
\eqref{Levi_iso}.
 \end{lem}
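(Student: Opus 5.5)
The plan is to read off both identifications from the description of $Tp$ in Proposition \ref{Corr_space_prop}, and then to compute the Levi bracket through the Cartan connection $\omega$.

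\textbf{The two subbundles.} Under $T\cC\cong\cP\times_H\tilde\g/\h$ and $TM\cong\cP\times_G W$, the map $Tp$ at a point $y=\pi_\cC(u)$ becomes the projection $\tilde\g/\h=W\oplus\g/\h\to W$. In the frame $u$, viewed as a map $W\to T_{p(y)}M$, the line $\hat y$ corresponds to $u(\hat z_0)=u(W^{-1})$, because $\cC=\cP/H$ and $y$ is the class of $u$ acting on $z_0$. Likewise $\wh T_y\cC_{p(y)}$ corresponds to $u(\wh T_{z_0}Z)=u(W^{-2})$, since $u$ maps $\widehat Z$ onto $\widehat\cC_{p(y)}$. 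Taking preimages under $Tp$ gives
\begin{equation*}
T^{-i}\C\cong\cP\times_H(W^{-i}\oplus\g/\h),\qquad i=1,2.
\end{equation*}
By \eqref{HC=W} the horizontal bundle is $\cH\cC\cong\cP\times_H W$. Intersecting with $T^{-i}\C$ then gives $\cH^{-i}\cC\cong\cP\times_H W^{-i}$, and in particular $\F^\gamma\cong\cP\times_H W^{-1}$.

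\textbf{The bracket.} The Levi bracket is tensorial, so it suffices to compute it on lifts. Sections of $\cP\times_H(\tilde\g/\h)$ correspond to $H$-equivariant functions $\cP\to\tilde\g/\h$. A vector field $\xi$ on $\cC$ lifts to an $H$-invariant-modulo-vertical field $\tilde\xi$ on $\cP$ with $\omega(\tilde\xi)$ lifting this function. Since $\omega$ takes values in $\tilde\g$, I will work with $\omega$-constant fields. For $X\in\g$ and $w\in W^{-1}$, let $\tilde\xi=\omega^{-1}(X)=\zeta_X$ and $\tilde\eta=\omega^{-1}(w)$. Their projections to $\cC$ are local sections of $\cV\cC$ and $\F^\gamma$, at least pointwise near a given point; pointwise values are all that matter by tensoriality. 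The structure equation \eqref{def_Cartan_curv} gives
\begin{equation*}
\omega([\tilde\xi,\tilde\eta])=\tilde\xi\cdot\omega(\tilde\eta)-\tilde\eta\cdot\omega(\tilde\xi)+[\omega(\tilde\xi),\omega(\tilde\eta)]-K(\tilde\xi,\tilde\eta).
\end{equation*}
Since the fields are $\omega$-constant, the derivative terms vanish. Since $K$ is horizontal and $\tilde\xi$ is vertical, $K(\tilde\xi,\tilde\eta)=0$. Hence $\omega([\tilde\xi,\tilde\eta])=[X,w]=X\cdot w\in W^{-2}$.

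Projecting to $\cC$, we have $[\xi,\eta]\in T^{-2}\C$ with class in $\textrm{gr}_{-2}(T\C)\cong\cP\times_H W^{-2}/W^{-1}$ equal to the class of $X\cdot w$. Here $\textrm{gr}_{-2}\cong W^{-2}/W^{-1}$ because $T^{-2}/T^{-1}\cong (W^{-2}\oplus\g/\h)/(W^{-1}\oplus\g/\h)$. This class is exactly \eqref{alg_bracket}. Note that $X\in\h$ gives $X\cdot w\in W^{-1}$, since $H$ stabilizes $\hat z_0$, which confirms that the map is well defined on $\g/\h$.

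\textbf{Main obstacle.} The delicate step is justifying the passage from brackets on $\cP$ to brackets on $\cC$. The fields $\omega^{-1}(X)$ and $\omega^{-1}(w)$ are not $H$-invariant, so they do not literally project to vector fields on $\cC$. To handle this I will take $H$-equivariant functions $s:\cP\to\tilde\g$ lifting given sections, and use the standard correspondence-space formula
\begin{equation*}
\omega([\tilde\xi,\tilde\eta])=\tilde\xi\cdot s_\eta-\tilde\eta\cdot s_\xi+[s_\xi,s_\eta]-K(\tilde\xi,\tilde\eta),
\end{equation*}
taken modulo $\h$ (see \cite{Cap_Corresp}). The derivative terms lie in $W^{-1}\oplus\g$, because $s_\eta$ has values in $W^{-1}\oplus\g$ and $s_\xi$ in $\g$. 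They therefore vanish in $\textrm{gr}_{-2}$, and the curvature term vanishes as before. This yields the claim with the sign convention $\{v,f\}$. The identification with \eqref{Levi_iso} is then, up to the order of arguments (with the corresponding sign), exactly the stated identity.
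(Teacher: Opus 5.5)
Your proposal is correct and takes essentially the paper's route. The identifications come from $Tp$ corresponding to $\tilde\g/\h\to\tilde\g/\g$. The bracket is computed by lifting sections of $\cV\cC$ and $\F^\gamma$ to fields whose $\omega$-values lie in $\g$ and $W^{-1}\oplus\h$ respectively. In the structure equation, the derivative terms then vanish modulo $W^{-1}\oplus\g$, and $K$ vanishes by horizontality.

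Your preliminary computation with $\zeta_X$ and $\omega^{-1}(w)$ is not valid on its own, as you note, because these fields do not project to $\cC$. The lift argument in your last paragraph, which is exactly the paper's argument, fixes this.
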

 \begin{proof}
 The first statement follows directly from the description of $T^{-1}\C$ and $T^{-2}\C$ and the fact that,
 under the isomorphisms \eqref{omega_ind_isos}, $Tp:T\cC\rightarrow TM$
 corresponds exactly to the natural projection $\tilde \g/\h\to \tilde \g/\g$. The preimages under $T\pi_\C: T\mathcal P\rightarrow T\C$ 
 of the distributions $\cV\C$ and $\F^\gamma$ correspond to $\omega^{-1}(\g)$ and $\omega^{-1}(W^{-1}\oplus\h)$ respectively and
 locally sections of $\cV\C$ and $\F^\gamma$ lift to sections of $\omega^{-1}(\g)$ respectively $\omega^{-1}(W^{-1}\oplus\h)$.
 Since $\omega$ is a Cartan connection of type $(\tilde G,G)$, horizontality of the Cartan curvature implies that
 for sections $\xi$ of $\omega^{-1}(\g)$ and $\eta$ of $\omega^{-1}(W^{-1}\oplus\h)$, one has
 \begin{align*}
 0=K(\xi,\eta)&=\xi\cdot\omega(\eta)-\eta\cdot\omega(\xi)-\omega([\xi,\eta])+[\omega(\xi),\omega(\eta)]\\
 &\equiv -\omega([\xi,\eta])+[\omega(\xi),\omega(\eta)]\mod W^{-1}\oplus\g.
 \end{align*}
 As $\omega([\xi,\eta])\mod W^{-1}\oplus\g$ describes the Levi-bracket $\mathcal L:\cV\cC\otimes \F^{\gamma}\rightarrow\textrm{gr}_{-2}(T\C)$ and $[\omega(\xi),\omega(\eta)]\mod W^{-1}\oplus\g$ the bracket \eqref{algebraic_bracket}, 
 this proves the second claim.
 \end{proof}
 
 \begin{prop}\label{prop_part_conn} 
   Suppose that $\gamma\in\Omega^1(\cP,\mathfrak g)$ is a connection on the
   $G$-structure $(\pi_M: \cP\rightarrow M, \theta)$ and denote by
   $\omega=\theta+\gamma\in\Omega^{1}(\cP, \tilde \g)$ the associated Cartan
   connection.  For any $H$-module $E$ the Cartan connection $\omega=\theta+\gamma$
   induces a partial linear $\mathcal H\cC$-connection
 \begin{equation}\label{gamma_ind_partial_conn}
 \nabla^\gamma: \cO(\mathcal H\mathcal C)\times \cO(\mathcal E)\rightarrow \cO(\mathcal E).
 \end{equation}
 on the associated vector bundle $\mathcal E=\cP\times_H E\rightarrow \cC$. In
 particular, we have induced partial connections on $\cV\cC $, $\F^\gamma$,
 $\cH^{-2}\cC$ and
 $\emph{gr}_{-2}(T\cC)=\emph{gr}_{-2}(\cH\cC)=\cH^{-2}\cC/\F^\gamma$. Moreover, the
 bracket
 $\{_-,_-\}=\mathcal L: \cV\cC\otimes \F^\gamma\rightarrow \emph{gr}_{-2}(T\cC)$ is
 parallel with respect to $\nabla^\gamma$.
 \end{prop}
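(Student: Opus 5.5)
The construction I would use is the standard one for Cartan geometries. A section $s\in\cO(\mathcal E)$ corresponds to an $H$-equivariant function $\tilde s:\cP\to E$, that is, $\tilde s(u h)=h^{-1}\cdot\tilde s(u)$. For a local section $\xi\in\cO(\mathcal H\cC)$, I would choose a local lift $\tilde\xi\in\mathfrak X(\cP)$ with $T\pi_\cC\circ\tilde\xi=\xi\circ\pi_\cC$ and with $\omega(\tilde\xi)\in W$. Such lifts exist: by \eqref{HC=W}, $\mathcal H\cC$ corresponds to $\cP\times_H W$ under $T\cC\cong\cP\times_H\tilde\g/\h$. Concretely, $\omega^{-1}(W)\subset T\cP$ is an $H$-invariant distribution mapping isomorphically onto $\mathcal H\cC$; it is exactly the horizontal distribution $\ker\gamma$. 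The lift is then the unique horizontal lift, and it is $H$-invariant, so I would define $\nabla^\gamma_\xi s$ to be the section corresponding to $\tilde\xi\cdot\tilde s$. This is nothing but the restriction to $\cP\to\cC$ of the usual covariant derivative from the principal connection $\gamma$ on $\cP\to M$. Equivariance of $\tilde\xi\cdot\tilde s$ follows from $(r^h)^*\tilde\xi=\tilde\xi$, which holds because $\ker\gamma$ is $G$-invariant. Linearity over functions in $\xi$ follows from uniqueness of the horizontal lift, and the Leibniz rule in $s$ is immediate. This gives \eqref{gamma_ind_partial_conn}.

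The special cases follow by applying this to the $H$-modules $\g/\h$, $W^{-1}$, $W^{-2}$ and $W^{-2}/W^{-1}$, all of which are $H$-invariant by \eqref{filtration_W}. The preceding lemma identifies the corresponding associated bundles with $\cV\cC$, $\F^\gamma$, $\cH^{-2}\cC$ and $\textrm{gr}_{-2}(T\cC)$. For the last of these I would note that $\cH^{-2}\cC/\F^\gamma\cong\cP\times_H W^{-2}/W^{-1}$ and $T^{-2}\C/T^{-1}\C\cong\cP\times_H(W^{-2}\oplus\g/\h)/(W^{-1}\oplus\g/\h)$, and these agree. The same tensor construction applied to $E_1\otimes E_2$ or $\textrm{Hom}(E_1,E_2)$ yields the induced connections on tensor products, compatible with the Leibniz rule, since the derivative of equivariant functions along a single vector field satisfies the product rule.

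For parallelity, the bracket $\{\_,\_\}$ is by construction induced by the $H$-equivariant bilinear map \eqref{alg_bracket}. It therefore corresponds to a constant $H$-equivariant function $\cP\to(\g/\h)^*\otimes W^{-1*}\otimes W^{-2}/W^{-1}$. Its derivative along any vector field on $\cP$, in particular along $\tilde\xi$, vanishes. Equivalently, for $v,f$ with equivariant functions $\tilde v,\tilde f$ we have $\tilde\xi\cdot\{\tilde v,\tilde f\}=\{\tilde\xi\cdot\tilde v,\tilde f\}+\{\tilde v,\tilde\xi\cdot\tilde f\}$ by bilinearity. The preceding lemma then identifies $\{\_,\_\}$ with $\mathcal L$.

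I expect the only delicate point to be the well-definedness in the first step. A lift of $\xi$ with $\omega(\tilde\xi)\in W$ is not unique if one only asks for it to project correctly: adding a vertical field in $\omega^{-1}(\h)$ changes $\tilde\xi\cdot\tilde s$ by an $\h$-action term. The fix is to insist on $\omega(\tilde\xi)\in W$ exactly, which removes the freedom. One must also check that this is compatible with the $\h$-component of $\omega$ vanishing on horizontal lifts; this holds because $W$ and $\h$ are complementary in $\omega$-values.
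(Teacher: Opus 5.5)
Your proposal is correct and follows essentially the same route as the paper. You lift $\xi\in\cO(\cH\cC)$ uniquely to the $H$-invariant horizontal field in $\cH\cP=\omega^{-1}(W)=\ker\gamma$, differentiate the $H$-equivariant function corresponding to $s$ along it, and deduce parallelity of $\{_-,_-\}=\mathcal L$ from its being a constant function $\cP\to\textrm{Hom}(\g/\h\otimes W^{-1},W^{-2}/W^{-1})$; your extra checks (function-linearity via uniqueness of lifts, the product rule on the $\textrm{Hom}$-bundle) just spell out what the paper calls straightforward verifications.
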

\begin{proof}
 By construction, for each $u\in\cP$, the tangent map $T_u\pi_{\cC}$ restricts to a
 linear isomorphism $\cH_u\cP\to \cH_{\pi_{\cC}(u)}\cC$. Hence we can lift tangent
 vectors from $\cC$ to $\cP$, provided that they lie in the subbundle
 $\cH\cC\subset T\cC$. This also allows us to lift a local section
 $\xi\in\cO(\cH\cC)$ to a local section $\xi^h\in\cO(\cH\cP)$. Since the
 subbundle $\cH\cP\subset T\cP$ by definition is $G$-invariant and hence
 $H$-invariant, it follows readily that $\xi^h$ is an $H$-invariant vector field on
 $\cP$. Thus we can imitate the usual construction of induced linear connections. For
 $s\in\mathcal{O}(\cE)$ consider the corresponding $H$-equivariant function $f_s:\cP\to
 E$. Then also $\xi^h\cdot f_s$ is equivariant and we define $\nabla^\gamma_\xi f$ to
 be the corresponding section.

One verifies straightforwardly that $\nabla^\gamma$ defines a partial connection as
 claimed. Since $\{_-,_-\}$ is induced by an $H$-equivariant map, it corresponds to a
 constant function $\cP\rightarrow \textrm{Hom}(\g/\h\otimes W^{-1}, W^{-2}/W^{-1})$
 and hence is parallel for $\nabla^\gamma$.
 \end{proof}
 
 We will now relate the characteristic and cubic torsion of $\F^\gamma$ to the
 Cartan curvature $K\in\Omega^2(\cP,\tilde\g)$ of $\omega$.
 Recall that $K=\tau+\rho$, where $\tau\in\Omega^2(\cP, W)$ is the torsion and
 $\rho\in\Omega^2(\cP,\g)$ is the curvature of $\gamma$.  Since $K$ and its
 components $\tau$ and $\rho$ are $G$-equivariant, in particular $H$-equivariant, and
 horizontal for the projection $\pi_M:\cP\rightarrow M$ and hence for
 $\pi_\C:\cP\rightarrow \C$, they descend to sections
 \begin{equation}
 K^M:=\tau^M+\rho^M\in\Omega^2(M,TM\oplus \cP\times_G\g)
 \end{equation}
 as well as to sections
 \begin{equation}
 K^\C:=\tau^\C+\rho^\cC\in\Omega^2(\C, \cH\C\oplus\cP\times_H\g),
 \end{equation}
 which vanish upon insertion of elements of $\cV\cC$.  The following proposition
 relates the characteristic torsion of $\F^\gamma$ to the torsion of $\gamma$. This
 recovers a result of \cite[Theorem 3.8]{Hwang-Li-2} in the case of isotrivial cone
 structures with homogeneous fibers:
 
\begin{thm}\label{thm_characteristic_torsion} 
Suppose that $\gamma\in\Omega^1(\cP,\mathfrak g)$ is a connection on the
$G$-structure $(\pi_M: \cP\rightarrow M, \theta)$ with associated Cartan connection
$\omega=\theta+\gamma\in\Omega^{1}(\cP, \tilde \g)$ and let
$\F:=\F^\gamma\cong\cP\times_HW^{-1}\subset T\cC$ be the induced conic connection on
$\cC\subset \mathbb P TM$. Then the following is equivalent:
\begin{enumerate}
\item $\F$ is characteristic
\item $\tau^\cC(f,\xi)\in \cO(T^{-2}\C)$ for all $f\in\cO(\F)$ and $\xi\in \cO(T^{-2}\C)$
\item $\tau^\cC(f,\xi)\in \cO(\cH^{-2}\C)$ for all $f\in\cO(\F)$ and $\xi\in \cO(\cH^{-2}\C)$
\item $\tau^M$ is a section of the subbundle $\cP\times_G \Xi_Z\subset \Wedge^2 T^*M\otimes TM$, where
\begin{equation}\label{Xi}
\Xi_Z:=\{\phi\in\Wedge^2 W^*\otimes W: \phi(\hat z, \wh T_{z}Z)\subset\wh T_{z}Z)\quad\forall z\in Z\}.
\end{equation}
\end{enumerate}
\end{thm}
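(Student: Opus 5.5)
The plan is to compute Lie brackets of vector fields on $\cC$ via the Cartan connection $\omega$ on $\pi_\cC:\cP\to\cC$, exactly as in the proof of the preceding lemma. For local sections $\xi,\eta$ of $T\cC$, choose lifts $\tilde\xi,\tilde\eta$ to $\cP$. Then $\omega([\tilde\xi,\tilde\eta])$ projected to $\tilde\g/\h$ represents $[\xi,\eta]$. The structure equation
\[
K(\tilde\xi,\tilde\eta)=\tilde\xi\cdot\omega(\tilde\eta)-\tilde\eta\cdot\omega(\tilde\xi)-\omega([\tilde\xi,\tilde\eta])+[\omega(\tilde\xi),\omega(\tilde\eta)]
\]
expresses $\omega([\tilde\xi,\tilde\eta])$ through derivatives, algebraic brackets and curvature. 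The target statement is that $[\cO(\F),\cO(T^{-2}\cC)]\subset\cO(T^{-2}\cC)$ holds if and only if the $W$-component of $K$ preserves the filtration in the appropriate way.

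\textbf{(1)$\Leftrightarrow$(2).} Take $f\in\cO(\F)$ and $\xi\in\cO(T^{-2}\cC)$, with lifts such that $\omega(\tilde f)\in W^{-1}\oplus\h$ and $\omega(\tilde\xi)\in W^{-2}\oplus\g=\tilde\g^{-2}$ pointwise. The derivative terms stay in $\tilde\g^{-2}$. The algebraic bracket $[\omega(\tilde f),\omega(\tilde\xi)]$ lies in $\tilde\g^{-2}$ as well, because:
\begin{itemize}
\item $[W^{-1},W^{-2}]=0$, since $W$ is abelian in $\tilde\g$;
\item $[W^{-1},\g]\subset W^{-2}$, since $W^{-2}=[\g,\hat z_0]$;
\item $[\h,\tilde\g^{-2}]\subset\tilde\g^{-2}$ by $H$-invariance of the filtration.
\end{itemize}
Hence $\omega([\tilde f,\tilde\xi])\equiv K(\tilde f,\tilde\xi)\bmod\tilde\g^{-2}$. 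Since $\g\subset\tilde\g^{-2}$, only the torsion component $\tau$ matters. So the characteristic torsion is, up to sign, the projection of $\tau^\cC(f,\xi)$ to $T\cC/T^{-2}\cC\cong\cP\times_H W/W^{-2}$, and (1)$\Leftrightarrow$(2) follows. Here $\tau^\cC(f,\xi)$ is a section of $\cH\cC\subset T\cC$, and it lies in $T^{-2}\cC$ exactly when it lies in $\cH^{-2}\cC=\cH\cC\cap T^{-2}\cC$.

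\textbf{(2)$\Leftrightarrow$(3).} Since $\tau^\cC$ vanishes on $\cV\cC$ and $T^{-2}\cC=\cV\cC\oplus\cH^{-2}\cC$, it suffices to test $\xi\in\cO(\cH^{-2}\cC)$. For such $\xi$, membership of $\tau^\cC(f,\xi)$ in $T^{-2}\cC$ is the same as membership in $\cH^{-2}\cC$.

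\textbf{(3)$\Leftrightarrow$(4).} This is the translation to equivariant functions. Let $t:\cP\to\Wedge^2W^*\otimes W$ be the $G$-equivariant function representing $\tau^M$. At $u\in\cP$ over $y\in\cC$, the fibers $\F_y$ and $\cH^{-2}_y\cC$ correspond to $W^{-1}=\hat z_0$ and $W^{-2}=\wh T_{z_0}Z$, and $\tau^\cC$ corresponds to $t(u)$. So (3) says $t(u)(\hat z_0,\wh T_{z_0}Z)\subset\wh T_{z_0}Z$ for all $u\in\cP$. Replacing $u$ by $ug$ replaces $t(u)$ by $g^{-1}\cdot t(u)$. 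Transitivity of $G$ on $Z$ then turns the condition at $z_0$ into the condition at every $z=g z_0$, which says $t(u)\in\Xi_Z$. Conversely, $t(u)\in\Xi_Z$ implies the condition at $z_0$. Since $\Xi_Z$ is $G$-invariant, it defines the subbundle $\cP\times_G\Xi_Z$.

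\textbf{Main obstacle.} The delicate step is the bookkeeping in (1)$\Leftrightarrow$(2). One must check that lifts with the stated $\omega$-values exist, that the derivative and algebraic bracket terms genuinely land in $\tilde\g^{-2}$, and that the $\g$-part of $K$ (that is, $\rho$) is harmless because $\g\subset\tilde\g^{-2}$. I would handle this by mimicking the preceding lemma's computation, now working modulo $\tilde\g^{-2}$ instead of modulo $W^{-1}\oplus\g$.
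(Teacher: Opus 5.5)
Your proposal is correct and follows essentially the paper's argument: you write $\omega$ of the bracket of lifts via the structure equation, so that only the torsion term can leave $\tilde\g^{-2}$, and you get (3)$\Leftrightarrow$(4) from $G$-equivariance together with transitivity of $G$ on $Z$. The differences are cosmetic. The paper uses horizontal lifts $f^h,\xi^h$ with $\xi\in\cO(\cH^{-2}\cC)$, so $[\omega(f^h),\omega(\xi^h)]=0$ immediately because $W$ is abelian, which spares your checks of $[W^{-1},\g]\subset W^{-2}$ and $[\h,\tilde\g^{-2}]\subset\tilde\g^{-2}$; also, your congruence should read $\omega([\tilde f,\tilde\xi])\equiv -K(\tilde f,\tilde\xi)$, a sign slip that does not affect the membership argument.
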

 \begin{proof}
   The equivalence of (2) and (3) is clear, since $\tau^\cC$ by definition has values
   in $\cH\C$ and vanishes upon insertion of a section of $\cV\cC$.  To see that (1)
   is equivalent to (3), note first that $\F$ is characteristic if and only if the
   Lie bracket of a section of $\F$ with a section of $\cH^{-2}\cC$ is a section of
   $T^{-2}\C$.  Suppose now that $f$ is a section of $\F\cong\cP\times_H W^{-1}$ and
   $\xi$ is a section of $\cH^{-2}\cC\cong\cP\times_H W^{-2}$ and let
   $f^h, \xi^h\in\mathfrak X(\cP)^H$ be their horizontal lifts to $\cP$ as in the
   proof of Proposition \ref{prop_part_conn}.  Then $[f^h,\xi^h]$ is a lift of
   $[f,\xi]$ and $[f,\xi]$ is a section of
   $T^{-2}\cC\cong \cP\times_H\tilde \g^{-2}/\h$ if and only if
   $\omega([f^h,\xi^h]):\cP\rightarrow \tilde \g$ has values in
   $\tilde \g^{-2}=W^{-2}\oplus\g$.  By definition of the curvature, we have
 \begin{align}\label{bracket_F and H2}
 \omega([f^h,\xi^h])&=-K(f^h,\xi^h)+[\omega(f^h),\omega(\xi^h)]+f^h\cdot\omega(\xi^h)-\xi^h\cdot\omega(f^h)\\\nonumber
 &=-\rho(f^h,\xi^h)-\tau(f^h,\xi^h)+f^h\cdot\omega(\xi^h)-\xi^h\cdot\omega(f^h),
 \end{align}
 where the second equality follows from $W\subset \tilde \g$ being an abelian
 subalgebra. Since $\rho(f^h,\xi^h)$, $f^h\cdot\omega(\xi^h)$, and
 $\xi^h\cdot\omega(f^h)$ have values in $\g$, $W^{-2}$, and $W^{-1}$, respectively,
 we see that $\omega([f^h,\xi^h])$ has values in $\tilde\g^{-2}$ if and only if
 $\tau(f^h,\xi^h)$ has values in $W^{-2}$. By definition of $\tau^\C$, this is
 equivalent to $\tau^\cC(f,\xi)$ having values in $\cH^{-2}\cC$.  Hence, (1) is
 equivalent to (3).

 To see that (3) is equivalent to (4), consider the $H$-invariant
 subspace
 $$\Xi_Z^H:=\{\phi\in\Wedge^2 W^*\otimes W: \phi(W^{-1}, W^{-2})\subset
 W^{-2}\}\subset \Wedge^2 W^*\otimes W.$$ By definition, $\tau^\C$ and $\tau^M$ are
 both induced by $\tau$, which can be identified with a $G$-equivariant function
 $\cP\rightarrow \Wedge^2W^*\otimes W$. Now, (3) is equivalent to $\tau$, viewed as
 an $H$-equivariant function, having values in $\Xi_Z^H$. However, since $\tau$ is
 also a $G$-equivariant function, giving rise to a section $\tau^M$, it must have
 values in a $G$-invariant subspace of $\Xi_Z^H$. Since, for a different choice of
 base point $z'_0=g\cdot z_0\in Z$, the filtration \eqref{filtration_W} changes by
 multiplication by $g$, i.e. $g\cdot \widehat T_{z_0}Z=\widehat T_{g\cdot z_0}Z$, the
 largest such subspace is $\Xi_Z\subset\Xi_Z^H$. This implies that (3) is equivalent
 to (4).
 \end{proof}
 
 Assuming that $\F^\gamma$ is characteristic, we want to relate its cubic torsion of
 $\F^\gamma$ to the curvature of $\omega$. As a first step, we compute the Bott
 connection in terms of $\nabla^\gamma$.
 
 \begin{thm}\label{Thm_Bott_in_terms_of_gamma} 
In the setting of Proposition \ref{prop_part_conn} assume that $\F:=\F^\gamma$ is
characteristic. Then in terms of the splitting
 \begin{equation}\label{gamma_ind_splitting}
 T^{-2}\C/\F\cong\emph{gr}_{-2}(\cH\cC)\oplus \cV\cC=\emph{gr}_{-2}(T\cC)\oplus \cV\cC,
\end{equation} for which we use a vector notation with subscript $\ga$, 
and the partial connection $\nabla^\gamma$ induced by $\gamma$, the Bott connection is given by
 \begin{equation}\label{formula_Bott_in_terms_of_gamma}
\nabla^\mathcal
B_f \begin{pmatrix}q\\ v\end{pmatrix}_\ga= \begin{pmatrix}\nabla^\gamma_fq
    -\tau_{-2}^\cC(f,q)+\mathcal
    L(f,v)\\ \nabla^\gamma_fv-\rho^\cC_{-1}(f,q)\end{pmatrix}_\ga
  \in\begin{matrix}\cO(\emph{gr}_{-2}(T\C))\\\oplus\\\cO(\cV\cC)\end{matrix},
\end{equation}
for sections $f\in\cO(\F)$ , $q\in\cO(\emph{gr}_{-2}(T\cC))$ and
$v\in\cO(\cV\cC)$. Here, $\tau_{-2}^\cC(f,q)$ denotes the projection of
$\tau^\cC(f,q)\in\cO(\cH^{-2}\cC)$ to
$\cO(\emph{gr}_{-2}(\cH\cC))=\cO(\emph{gr}_{-2}(T\cC))$ and
$\rho^\cC_{-1}(f,q)$ the projection of $\rho^\cC(f,q)\in\cO(\cP\times_H\g)$ to
$\cO(\cV\C)=\cO(\cP\times_H\g/\h)$.
 \end{thm}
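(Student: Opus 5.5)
The plan is to compute everything on $\cP$ via horizontal lifts and the Cartan curvature identity, exactly as in the proof of Theorem \ref{thm_characteristic_torsion}. The splitting \eqref{gamma_ind_splitting} comes from $T^{-2}\C=\cH^{-2}\C\oplus\cV\cC$. Dividing by $\F=\cH^{-1}\C$ gives $T^{-2}\C/\F\cong \cH^{-2}\C/\F\oplus\cV\cC$. So a section $\binom{q}{v}_\ga$ is represented by $\xi+v$, where $\xi\in\cO(\cH^{-2}\C)$ projects to $q$. Since $\nabla^{\mathcal B}_f$ is additive and everything is tensorial in $f$, I will treat the two summands separately.

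\emph{Vertical part.} For $v\in\cO(\cV\cC)$ we have $\nabla^{\mathcal B}_f\binom{0}{v}_\ga=\bar q([f,v])$. I lift $f$ to the $H$-invariant horizontal field $f^h$. I lift $v$ to a vector field $\eta$ on $\cP$ with $\omega(\eta)$ having values in $\g$, whose class mod $\h$ is the equivariant function $f_v:\cP\to\g/\h$. Then I apply the curvature identity
\[
\omega([f^h,\eta])=f^h\cdot\omega(\eta)-\eta\cdot\omega(f^h)+[\omega(f^h),\omega(\eta)]-K(f^h,\eta).
\]
Here $K(f^h,\eta)=0$ because $K$ is horizontal. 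Also $\omega(f^h)=\theta(f^h)\in W^{-1}$, so $\eta\cdot\omega(f^h)$ lies in $W^{-1}$, which is killed modulo $\F$. Furthermore $[\omega(f^h),\omega(\eta)]=-\omega(\eta)\cdot\theta(f^h)\in W^{-2}$. Its class in $W^{-2}/W^{-1}$ is exactly the algebraic bracket \eqref{alg_bracket}, i.e.\ $\mathcal L(f,v)$ by the preceding Lemma. Finally, $f^h\cdot\omega(\eta)$ in $\g$ mod $\h$ gives $\nabla^\gamma_f v$ by the definition in Proposition \ref{prop_part_conn}. I will double-check the sign against the Lemma's identification $\{\,,\}=\mathcal L$. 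Together this yields the $v$-column of \eqref{formula_Bott_in_terms_of_gamma}.

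\emph{Horizontal part.} Take $\xi\in\cO(\cH^{-2}\C)$ with horizontal lift $\xi^h$, so $\omega(\xi^h)=\theta(\xi^h)$ has values in $W^{-2}$ and $\gamma(\xi^h)=0$. By \eqref{bracket_F and H2},
\[
\omega([f^h,\xi^h])=-\rho(f^h,\xi^h)-\tau(f^h,\xi^h)+f^h\cdot\theta(\xi^h)-\xi^h\cdot\theta(f^h).
\]
The last term lies in $W^{-1}$ and hence vanishes modulo $\F$. Since $\F$ is characteristic, Theorem \ref{thm_characteristic_torsion} shows that $\tau(f^h,\xi^h)\in W^{-2}$. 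Projecting to $W^{-2}/W^{-1}$ then gives the $\operatorname{gr}_{-2}$-component $\nabla^\gamma_f q-\tau^\C_{-2}(f,q)$. The $\g$-component is $-\rho(f^h,\xi^h)$. Taken mod $\h$, it is the vertical component $-\rho^\C_{-1}(f,q)$, because vertical tangent vectors correspond to $\g/\h$ via Proposition \ref{Corr_space_prop}.

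I have to check that $\tau^\C(f,\xi)$ and $\rho^\C(f,\xi)$ modulo the relevant pieces depend only on $q=\xi \bmod \F$. This holds because $\tau(f^h,f'^h)\in W^{-1}$ for $f,f'\in\F$: $\F$ has rank one, so $\tau(f,f)=0$ by skew-symmetry. Likewise $\rho(f,f)=0$.

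The main obstacle I expect is the bookkeeping step of translating $\omega$-values of brackets of lifts into components of the splitting. Lifts of vertical fields are not unique, and the choice is only relevant modulo $\h$. The $\h$-valued part of $\omega(\eta)$ acts on $\theta(f^h)\in W^{-1}$ and stays in $W^{-1}$, so it drops out. I will therefore write the vertical argument carefully, using $H$-equivariance of $f_v$ and the fact that vertical $H$-fields act trivially. Keeping the signs consistent with the definition of $\nabla^\gamma$ is also part of this step.
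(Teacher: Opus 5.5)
Your proposal is correct and is essentially the paper's proof: you use horizontal lifts to $\cP$ and the curvature identity for $\omega([f^h,\xi^h])$, together with the characteristic property, to get the column for $\binom{q}{0}_\ga$; you use horizontality of $K$ on a lift of $v$ to get the column for $\binom{0}{v}_\ga$; and you read off components via $\tilde\g/\h\cong W\oplus\g/\h$. Your sign worry is moot, since the top entry of $\nabla^{\mathcal B}_f\binom{0}{v}_\ga$ is $q_{-2}([f,v])=\mathcal L(f,v)$ by definition of the Levi bracket, and your check that $\tau^\C(f,\cdot)$ and $\rho^\C(f,\cdot)$ vanish on the rank-one bundle $\F$ (so the formula depends only on $q$) is a detail the paper leaves implicit.
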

 \begin{proof}
 Suppose that $f\in\cO(\F)$ and $\xi\in\mathcal{O}(\cH^{-2}\cC)$ and let $f^h,
 \xi^h\in\mathfrak X(\cP)^H$ be their horizontal lifts to $\cP$ as in the proof of
 Theorem \ref{thm_characteristic_torsion}. By Theorem
 \ref{thm_characteristic_torsion}, the expression \eqref{bracket_F and H2} has values
 in $\tilde \g^{-2}=W^{-2}\oplus\g$ and implies that
 
 \begin{equation}\label{formula_Bott_in_terms_of_gamma_1}
\nabla^\mathcal B_f \begin{pmatrix}q\\ 0\end{pmatrix}_\ga= \begin{pmatrix}\nabla^\gamma_fq -\tau^\C_{-2}(f,q)\\ -\rho^\C_{-1}(f,q)\end{pmatrix}_\ga.
\end{equation}
 Now let $v\in\cO(\cV\cC)$ and $\tilde v\in\mathfrak X(\cP)^H$ be a lift of $v$. Since $K(\tilde v,_-)=0$, we have
 \begin{align*}
 \omega([f^h,\tilde v])&=[\omega(f^h),\omega(\tilde v)]+f^h\cdot\omega(\tilde v)-\tilde v\cdot\omega(f^h)
  \end{align*}
  The first term has values in $W^{-2}$, the second in $\g$ and the third in $W^{-1}$.
  Therefore, we get
 \begin{equation}\label{formula_Bott_in_terms_of_gamma_2}
\nabla^\mathcal B_f \begin{pmatrix}0\\ v\end{pmatrix}_\ga=\begin{pmatrix}\{f,v\}\\ \nabla^\gamma_f v\end{pmatrix}_\ga=\begin{pmatrix}\mathcal L(f,v)\\ \nabla^\gamma_f v\end{pmatrix}_\ga,
\end{equation}
 which finishes the proof.
 \end{proof}

 \begin{cor}\label{cor_cubic_tor_to_curv}
   In the setting of Theorem \ref{Thm_Bott_in_terms_of_gamma}, one has:
\begin{enumerate}
\item There exists a local nowhere vanishing section $f_o$ of $\F$ 
 such that $\nabla^\gamma_{f_o} f_o=0$.
\item For $f_o$ as in (1), we have $$\nabla_{f_o}^\gamma v=\nabla_{f_o}^{f_o}v+\frac{1}{2}\phi_{f_o}^{-1}(\tau^\cC_{-2}(f_o,\phi_{f_o}(v))).$$ 
\item For $f_o$ as in (1), $\tilde\mu^{f_o}(q)(f_o)$ is given by 
$$
 \rho^\cC_{-1}(f_o,q)-\frac{1}{2}\phi_{f_o}^{-1}((\nabla^\gamma_{f_o}\tau_{-2}^\C)(f_o,q))-\frac{1}{4}\phi_{f_o}^{-1}(\tau_{-2}^\C(f_o,\tau_{-2}^\C(f_o,q)))
 $$
 \end{enumerate}
 \end{cor}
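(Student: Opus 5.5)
The plan is to compare the two descriptions of the Bott connection that are now available. One is Proposition \ref{prop_Bott}, which uses the splitting $\pi_f$ and the partial connections $\nabla^f$. The other is Theorem \ref{Thm_Bott_in_terms_of_gamma}, which uses the $\gamma$-induced splitting \eqref{gamma_ind_splitting} and $\nabla^\gamma$. Since both are splittings of the same bottom row of \eqref{com_exact_diagram} and $\cV\cC$ is the common subbundle, they differ by a single bundle map $A:\textrm{gr}_{-2}(T\C)\to\cV\cC$, with $\binom{q}{0}_\ga=\binom{q}{Aq}_f$ and $\binom{0}{v}_\ga=\binom{0}{v}_f$. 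Equating the two formulas for $\nabla^{\mathcal B}_f$ component by component should then produce all three claims.

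For (1), I will use that $\F\cong\cP\times_H W^{-1}$ carries the partial connection $\nabla^\gamma$ from Proposition \ref{prop_part_conn}. Start with any local nowhere vanishing $f$ and write $\nabla^\gamma_f f=af$ for a holomorphic function $a$. For $f_o=hf$ one gets
\[
\nabla^\gamma_{f_o}f_o=h\bigl(f\cdot h+ah\bigr)f .
\]
So it suffices to solve the linear ODE $f\cdot h=-ah$ along the flow of $f$. I will prescribe $h=1$ on a local hypersurface transversal to $\F$ and solve along the integral curves; this gives a holomorphic nowhere vanishing $h$. With $\nabla^\gamma f_o=0$ along $\F$ and $\mathcal L$ parallel (Proposition \ref{prop_part_conn}), the map $\phi_{f_o}=\mathcal L(f_o,\cdot)$ intertwines $\nabla^\gamma_{f_o}$ on $\cV\cC$ and on $\textrm{gr}_{-2}(T\C)$. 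By definition it also intertwines $\nabla^{f_o}_{f_o}$.

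For (2), I apply $\nabla^{\mathcal B}_{f_o}$ to $\binom{0}{v}$. In the $\gamma$-picture the result is $\binom{\mathcal L(f_o,v)}{\nabla^\gamma_{f_o}v}_\ga$, which converts to $\binom{\mathcal L(f_o,v)}{\nabla^\gamma_{f_o}v+A\phi_{f_o}(v)}_{f_o}$. In the $f_o$-picture it is $\binom{\mathcal L(f_o,v)}{\nabla^{f_o}_{f_o}v}_{f_o}$. Comparing bottom components gives
\[
\nabla^{f_o}_{f_o}v=\nabla^\gamma_{f_o}v+A\phi_{f_o}(v).
\]
Next I take the top components of $\nabla^{\mathcal B}_{f_o}\binom{q}{0}_\ga=\nabla^{\mathcal B}_{f_o}\binom{q}{Aq}_{f_o}$ with $q=\phi_{f_o}(v)$. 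This gives
\[
\nabla^\gamma_{f_o}q-\tau^\C_{-2}(f_o,q)=\nabla^{f_o}_{f_o}q+\phi_{f_o}(Aq).
\]
Applying $\phi_{f_o}^{-1}$ and using the intertwining property turns this into $\nabla^\gamma v-\nabla^{f_o}v-\phi^{-1}\tau=A\phi v$. Combined with the previous identity, this yields $A=-\tfrac12\phi_{f_o}^{-1}\circ\tau^\C_{-2}(f_o,\cdot)$ and hence (2).

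For (3), I compare the bottom components in the same computation. The $f_o$-formula gives bottom $\nabla^{f_o}_{f_o}(Aq)-\tilde\mu^{f_o}(q,f_o)$ with top $\nabla^{f_o}_{f_o}q+\phi_{f_o}(Aq)$. Converting back to the $\gamma$-splitting subtracts $A$ applied to the top, and the result must equal $-\rho^\C_{-1}(f_o,q)$. This gives
\[
\tilde\mu^{f_o}(q,f_o)=\rho^\C_{-1}(f_o,q)+\nabla^{f_o}_{f_o}(Aq)-A\nabla^{f_o}_{f_o}q-A\phi_{f_o}Aq .
\]
The remaining step is to rewrite $\nabla^{f_o}$ as $\nabla^\gamma$ plus the correction from (2) on both $\cV\cC$ and $\textrm{gr}_{-2}$. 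The derivative terms then become $(\nabla^\gamma_{f_o}A)q$ plus commutator terms of $A$ with $A\phi$. I expect this bookkeeping to be the main obstacle, since the correction terms have to combine with $-A\phi A$ into exactly $-\tfrac14\phi^{-1}\tau(f_o,\tau(f_o,q))$. Since $f_o$ is $\nabla^\gamma$-parallel, $(\nabla^\gamma_{f_o}A)q=-\tfrac12\phi^{-1}((\nabla^\gamma_{f_o}\tau^\C_{-2})(f_o,q))$, which is the claimed derivative term. I will carry out the commutator bookkeeping carefully to check that the quadratic terms reduce to $-\tfrac14\phi^{-1}\tau(f_o,\tau(f_o,q))$.
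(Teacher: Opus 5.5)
Your proposal is correct and follows essentially the paper's route: you compare the Bott connection in the $\gamma$-splitting (Theorem \ref{Thm_Bott_in_terms_of_gamma}) with the one in the $f_o$-splitting (Proposition \ref{prop_Bott}) through the change-of-splitting map, and your $A=-\tfrac12\phi_{f_o}^{-1}\circ\tau^\C_{-2}(f_o,\cdot)$ is exactly the paper's relation \eqref{change_split}. The only differences are minor: the paper gets (2) directly from the defining identity \eqref{def_nabla^f}, and for (3) it applies the $\gamma$-formula to $\nabla^{\mathcal B}_{f_o}\binom{q}{0}_{f_o}$, which avoids rewriting $\nabla^{f_o}$ in terms of $\nabla^\gamma$. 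The bookkeeping you left open closes at once. By (2), $\nabla^{f_o}_{f_o}=\nabla^\gamma_{f_o}+A\phi_{f_o}$ on $\cV\cC$ and $\nabla^{f_o}_{f_o}=\nabla^\gamma_{f_o}+\phi_{f_o}A$ on $\textrm{gr}_{-2}(T\C)$, so the two $A\phi_{f_o}Aq$ terms cancel and $\nabla^{f_o}_{f_o}(Aq)-A\nabla^{f_o}_{f_o}q=(\nabla^\gamma_{f_o}A)q$, while $-A\phi_{f_o}Aq=-\tfrac14\phi_{f_o}^{-1}(\tau^\C_{-2}(f_o,\tau^\C_{-2}(f_o,q)))$ on its own.
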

 \begin{proof}
(1) holds as finding $f_o$ such that $\nabla^\gamma_{f_o} f_o=0$ amounts to solving
 an ODE. For (2) note that for $v\in\mathcal{O}(\cV\cC)$ we clearly have $v=\binom{0}{v}_\ga\in\cO(\cV\cC)$. 
 So Theorem \ref{Thm_Bott_in_terms_of_gamma} shows that 
\begin{equation}\label{eq_split_ga}
\bar{q}([f_o,v])=\begin{pmatrix}\cL(f_o,v)\\ \nabla^\ga_{f_o}v\end{pmatrix}_\ga,
\end{equation}
where $\bar q: T^{-2}\cC\rightarrow T^{-2}\C/\F$ is the natural projection.  Thus we
can apply Theorem \ref{Thm_Bott_in_terms_of_gamma} again to compute
$\bar{q}([f_o,[f_o,v]])$ in terms of the splitting associated to $\ga$. By
\eqref{formula_Bott_in_terms_of_gamma} the top component $q_{-2}([f_o,[f_o,v]])$
equals
$$
\nabla^\ga_{f_o}\Cal L(f_o,v)-\tau^\cC_{-2}(f_o,\Cal L(f_o,v))+\Cal
L(f_o,\nabla^\ga_{f_o}v). 
$$
From Proposition \ref{prop_part_conn}, we know that  $\mathcal
L=\{_-,_-\}:\F\otimes\cV\C\rightarrow \textrm{gr}_{-2}(T\C)$ is parallel for
$\nabla^\ga$, so the first and last terms in this sum agree and we get
$$
\cL(f_o,\nabla^\ga_{f_o}v)=\tfrac12q_{-2}([f_o,[f_o,v]])+\tfrac12 \tau^\cC_{-2}(f_0,\Cal L(f_o,v)).
$$
On the other hand, from
Proposition \ref{prop_splitting_f}, we know that $\nabla^{f_o}$ is characterized by 
$$\cL(f_o,\nabla_{f_o}^{f_o}v)=\frac{1}{2}q_{-2}([f_o,[f_o,v]]]),$$
 which together with the above implies (2).

To prove (3), we have to compare the splittings induced by $f_o$ and $\ga$. From the formula \eqref{formula_Bott} for the Bott connection in terms of the splitting determined by $f_o$, we see that
$\bar{q}([f_o,v])=\begin{pmatrix}\Cal
L(f_o,v)\\ \nabla^{f_o}_{f_o}v\end{pmatrix}_{f_o}$. Together with \eqref{eq_split_ga} and statement (2), this shows that 
 \begin{equation}\label{change_split}
   \begin{pmatrix}q\\v\end{pmatrix}_{f_o}=
   \begin{pmatrix}q\\ v+\frac{1}{2}\phi_{f_o}^{-1}(\tau^\C_{-2}(f_o,q))\end{pmatrix}_{\gamma}. 
 \end{equation}
 
 By \eqref{change_split} and Theorem
  \ref{Thm_Bott_in_terms_of_gamma}, we therefore get
\begin{align}\label{Bott-comp}\nonumber
\nabla^{\Cal B}_{f_o}\begin{pmatrix}q \\ 0 \end{pmatrix}_{f_o}&=\nabla^{\Cal B}_{f_o}\begin{pmatrix}q \\ \frac{1}{2}\phi_{f_o}^{-1}(\tau^\C_{-2}(f_o,q)) \end{pmatrix}_{\gamma}\\
&=\begin{pmatrix}
\nabla_{f_o}^\gamma
q-\frac{1}{2}\tau_{-2}^\C(f_o,q)\\\frac{1}{2}\nabla_{f_0}^\gamma(\phi_{f_o}^{-1}(\tau^\C_{-2}(f_o,
q))-\rho^{\cC}_{-2}(f_o,q) 
\end{pmatrix}_\ga .
\end{align}
Since $f_o$ and $\mathcal L: \cV\cC\otimes \F^\gamma\rightarrow  \textrm{gr}_{-2}(T\cC)$ are parallel for $\nabla^\ga$, so is $\phi_{f_o}^{-1}$, which allows us
to rewrite the first term in the bottom component of \eqref{Bott-comp} as
$$\frac{1}{2}\phi_{f_o}^{-1}(\nabla^{\ga}_{f_o}\tau^\C_{-2}(f_o, q))=\frac{1}{2}\phi_{f_o}^{-1}((\nabla^\ga_{f_o}\tau^{\cC}_{-2})(f_o,q)+\tau^{\cC}_{-2}(f_o,\nabla^\ga_{f_o}q)).$$ 
On the other hand, by \eqref{formula_Bott},  the bottom
component of $\nabla^{\Cal B}_{f_o}\binom{q}{0}_{f_o}$ in the splitting determined by $f_o$ equals  $-\tilde\mu^{f_o}(q)(f_o)$. Hence
formula \eqref{change_split} shows that we must get  $-\tilde\mu^{f_o}(q)(f_o)$, when we apply
$\frac12\phi_{f_o}^{-1}(\tau^{\cC}_{-2}(f_0,\_))$ to the top line of
\eqref{Bott-comp} and subtract the result from the bottom row, which proves (3). 
\end{proof}

Recall that the curvature of a connection $\gamma$ on the $G$-structure $(\mathcal P\rightarrow M,\theta)$, associated to the $Z$-isotrivial $\C\subset \mathbb PTM$, can be viewed as a $G$-equivariant map from 
$\mathcal P$ to 
$$\Wedge^2 W^*\otimes \g\subset \Wedge^2 W^*\otimes \mathfrak{gl}(W).$$
Also, recall that for any $z\in Z$ we have canonical isomorphisms
\begin{equation}\label{iso_iso}
\hat z^*\otimes \widehat T_zZ/\hat z\cong T_zZ\cong \g/\g_z,
\end{equation}
where $\g_z$ denotes the stabilizer of $\hat z\subset W$ in $\g$ and the second map
is induced by differentiating $\lambda_z: G\rightarrow Z$, given by $\lambda_z(g)=gz$
at the identity element in $G$.  Hence, for an element $z\in Z$ and an element
$\psi\in\Wedge^2 W^*\otimes \g$, we can consider the composition of
$\psi(\hat z,_-)\vert_{\widehat T_{z}Z}:{\widehat T_{z}Z}\rightarrow \g$ with the
projection $\g\rightarrow \g/\g_z$, which, by skew-symmetry of $\psi$, factors to a
map $\widehat{T}_{z}Z/\hat{z}\rightarrow \g/\g_z$. Using the isomorphism
\eqref{iso_iso}, we see that any element $\psi\in\Wedge^2 W^*\otimes \g$ induces a
linear map
\begin{equation}\label{psi_induced_map}
\bar{\psi}_z: \hat z\otimes\hat z\rightarrow \textrm{Hom}(\widehat{T}_{z}Z/\hat{z},\widehat{T}_{z}Z/\hat{z}),
\end{equation}
for any $z\in Z$. For a characteristic conic connection on an isotrivial cone structure that is induced from a connection on the associated $G$-structure, we can now formulate the following condition for its cubic torsion to vanish:
  \begin{thm}\label{thm_cubic_torsion_zero} 
    Suppose $\C\subset  \mathbb P  TM$ is  a $Z$-isotrivial  cone structure  for some
    closed  homogeneous   complex  submanifold  $Z\subset   \mathbb  P  W$   and  let
    $\cP\subset FM$ be the associated $G$-structure.  Let $\gamma\in\Omega^1(\cP,\g)$
    be a connection  on the $G$-structure and $\F:=\mathcal  F^\gamma$ the associated
    conic connection  on $\cC$.  Assume that  the torsion $\tau^M$  of $\gamma$  is a
    section of $\cP\times_G \widetilde\Xi_Z\subset \Wedge^2 T^*M\otimes TM$, where
 \begin{equation}\label{tildeXi}
   \widetilde{\Xi}_Z:=\{\phi\in\Wedge^2 W^*\otimes W: \phi(\hat z, \wh T_{z}Z)
   \subset\hat z\quad\forall z\in Z\},
\end{equation}
so $\F$ is characteristic by Theorem \ref{thm_characteristic_torsion}.  Then the
following statements are equivalent:
\begin{enumerate}
\item The cubic torsion of $\F$ vanishes.
\item For any $f\in\mathcal O(\F)$, the section
  $\rho^{\cC}_{-1}(f,_-)\in\emph{Hom}(\emph{gr}_{-2}(T\C),\cV\cC)$ is pure
  trace with respect to \eqref{Levi_iso}.
\item
The curvature $\rho^M$ of $\gamma$ is a section of $\cP\times_G \Theta_Z\subset\Wedge^2 T^*M\otimes \emph{End}(TM)$,
where 
\begin{equation}\label{Theta_Z}
\Theta_Z:=\{\psi\in\Wedge^2 W^*\otimes \g: \emph{Im}(\bar{\psi}_z)\subset\mathbb C\emph{Id}\quad\forall z\in Z\},
\end{equation}
where $\emph{Im}(\bar{\psi}_z)$ denotes the image of $\bar{\psi}_z$ as defined in \eqref{psi_induced_map}.
 \end{enumerate}
 \end{thm}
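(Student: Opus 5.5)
The plan is to use Corollary \ref{cor_cubic_tor_to_curv}, and to show that the stronger torsion assumption $\tau^M\in\cP\times_G\widetilde\Xi_Z$ kills both torsion contributions in part (3). By Proposition \ref{tilde_rho}, $\chi_\F=0$ is equivalent to $\tilde\mu^{f}(\_,f)\in\textrm{Hom}(\textrm{gr}_{-2}(T\C),\cV\cC)$ being pure trace for one (hence any) local nowhere vanishing $f$. The trace-free part is independent of $f$ and tensorial in the $\F$-slot, so it suffices to check this for the section $f_o$ with $\nabla^\gamma_{f_o}f_o=0$ from Corollary \ref{cor_cubic_tor_to_curv}(1). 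There part (3) gives
\[
\tilde\mu^{f_o}(q,f_o)=\rho^\cC_{-1}(f_o,q)-\tfrac12\phi_{f_o}^{-1}\bigl((\nabla^\gamma_{f_o}\tau^\C_{-2})(f_o,q)\bigr)-\tfrac14\phi_{f_o}^{-1}\bigl(\tau^\C_{-2}(f_o,\tau^\C_{-2}(f_o,q))\bigr).
\]

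First I will show that the assumption forces $\tau^\C_{-2}(f,\_)=0$ on $\textrm{gr}_{-2}(T\C)$ for every $f\in\cO(\F)$. Viewed as an $H$-equivariant function on $\cP$, $\tau$ takes values in $\widetilde\Xi_Z$. Evaluating at $z_0$ gives $\tau(W^{-1},W^{-2})\subset W^{-1}$, so the projection to $W^{-2}/W^{-1}$ vanishes. The map $(f,q)\mapsto\tau^\C_{-2}(f,q)$ is induced by the $H$-equivariant component $W^{-1}\otimes W^{-2}/W^{-1}\to W^{-2}/W^{-1}$ of $\tau$. It is well defined because $\tau(W^{-1},W^{-1})=0$ by skew symmetry on a line. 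This component is identically zero, so $\tau^\C_{-2}$ vanishes, and so does its $\nabla^\gamma$-derivative, since the zero section is parallel. Hence $\tilde\mu^{f_o}(q,f_o)=\rho^\cC_{-1}(f_o,q)$. This gives (1)$\Leftrightarrow$(2) for $f=f_o$, and then for all $f$, because $\rho^\cC_{-1}(hf,\_)=h\rho^\cC_{-1}(f,\_)$ and the pure-trace condition is invariant under $\phi_{hf}=h\phi_f$.

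For (2)$\Leftrightarrow$(3), I will translate (2) into a pointwise condition on the $G$-equivariant function $\rho:\cP\to\Wedge^2W^*\otimes\g$. At $u\in\cP$ over $y=\pi_\C(u)$, with $z_0$ corresponding to $y$, the map $\rho^\cC_{-1}(f,\_)$ is represented by $q\mapsto \rho(w,\tilde q)\bmod\h$, where $w\in\hat z_0$ and $\tilde q\in\widehat T_{z_0}Z$ lifts $q$. This is exactly $\psi(\hat z_0,\_)|_{\widehat T_{z_0}Z}$ composed with $\g\to\g/\h$. It factors through $\widehat T_{z_0}Z/\hat z_0$ because $\psi(w,w)=0$, and $\g/\h=\g/\g_{z_0}$. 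Composing with the inverse of the Levi bracket $\phi_f$ gives an endomorphism of $\widehat T_{z_0}Z/\hat z_0$.

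The step I expect to need care is checking that the Levi bracket of \eqref{alg_bracket}, read through \eqref{iso_iso}, matches $\g/\g_z\cong\hat z^*\otimes\widehat T_zZ/\hat z$. Concretely, $X\mapsto (w\mapsto Xw\bmod\hat z)$ must be exactly the identification used in \eqref{psi_induced_map}, up to a sign that is irrelevant for being pure trace. Granting this, (2) at $u$ says that $\bar\psi_{z_0}(w\otimes w)\in\mathbb C\,\Id$ for $\psi=\rho(u)$. Varying $u$ over the fiber $\cP_x$, $G$-equivariance turns this into the same condition for $\rho(u)$ at every $z\in Z$, since $G$ acts transitively on $Z$ and the construction of $\bar\psi_z$ is $G$-natural. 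Finally, $\bar\psi_z$ is determined by its value on $w\otimes w$ because $\hat z\otimes\hat z$ is one-dimensional. Hence the condition is exactly $\rho(u)\in\Theta_Z$, i.e. $\rho^M\in\cO(\cP\times_G\Theta_Z)$, and this argument runs in both directions.
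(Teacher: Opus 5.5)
Your proposal is correct and follows the paper's proof. As there, the hypothesis $\tau^M\in\cP\times_G\widetilde\Xi_Z$ kills both torsion terms in Corollary \ref{cor_cubic_tor_to_curv}(3), giving $\tilde\mu^{f_o}(q,f_o)=\rho^\cC_{-1}(f_o,q)$, and Proposition \ref{tilde_rho}(2) then yields the equivalences. You also write out the pointwise translation of (2) into the defining condition of $\Theta_Z$ (the Levi bracket matching \eqref{iso_iso}, plus $G$-equivariance), which the paper leaves implicit.
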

 \begin{proof}
   Fix a point $z_0\in Z\subset \mathbb P W$ and let $H<\textrm{Aut}(\wh Z)=:G$ be
   the stabilizer of $z_0$ so that $Z\cong G/H$. The assumption \eqref{tildeXi} is
   equivalent to $\tau^\cC(f,\xi)\in \cO(\F)$ for all $f\in\cO(\F)$ and
   $\xi\in \cO(T^{-2}\C)$, which implies on the one hand, by Theorem
   \ref{thm_characteristic_torsion}, that $\F$ is characteristic and on the other
   hand that all torsion terms on the left hand side of the identity in (3) of
   Corollary \ref{cor_cubic_tor_to_curv} vanish. Hence, (3) of Corollary
   \ref{cor_cubic_tor_to_curv} and (2) of Proposition \ref{tilde_rho} imply the
   equivalence of the three conditions.   
  \end{proof}
  
  In particular, we have:
  
  \begin{cor} Under the assumptions of Theorem \ref{thm_cubic_torsion_zero}, suppose
    that $\rho^M$ is a section of
    $\cP\times_G \widetilde{\Theta}_Z\subset\Wedge^2 T^*M\otimes \emph{End}(TM)$,
    where
\begin{equation}\label{tildetheta_Z}
\widetilde\Theta_Z:=\{\psi\in\Wedge^2 W^*\otimes \g: \psi(\hat z, \wh T_{z}Z)\in\g_z\quad \forall z\in Z\}\subset \Theta_Z.
\end{equation}
Then the cubic torsion of $\F$ vanishes.
  \end{cor}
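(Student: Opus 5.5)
The plan is to deduce the corollary from Theorem \ref{thm_cubic_torsion_zero}, using condition (3) there. Two things are needed: the inclusion $\widetilde\Theta_Z\subset\Theta_Z$ claimed in \eqref{tildetheta_Z}, and the observation that $\rho^M$ taking values in $\cP\times_G\widetilde\Theta_Z$ then forces values in $\cP\times_G\Theta_Z$. The torsion assumption of the theorem is part of the hypotheses, so the theorem applies. Once we know $\rho^M$ is a section of $\cP\times_G\Theta_Z$, the equivalence (3)$\Leftrightarrow$(1) gives vanishing of the cubic torsion.

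The main step is linear algebra at a single point $z\in Z$. Take $\psi\in\widetilde\Theta_Z$, so $\psi(\hat z,\wh T_zZ)\subset\g_z$. The map $\bar\psi_z$ from \eqref{psi_induced_map} is built as follows:
\begin{itemize}
\item restrict $\psi(\hat z,\_)$ to $\wh T_zZ$;
\item compose with the projection $\g\to\g/\g_z$;
\item pass through the isomorphisms \eqref{iso_iso}.
\end{itemize}
The hypothesis says exactly that the composite $\hat z\otimes\wh T_zZ\to\g\to\g/\g_z$ is zero. Hence $\bar\psi_z=0$, and in particular $\operatorname{Im}(\bar\psi_z)=\{0\}\subset\mathbb C\operatorname{Id}$. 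This holds for every $z$, so $\psi\in\Theta_Z$. We should also check that $\widetilde\Theta_Z$ and $\Theta_Z$ are $G$-invariant, so that $\cP\times_G\widetilde\Theta_Z\subset\cP\times_G\Theta_Z$ makes sense as a subbundle inclusion. This follows because $g\cdot\wh T_zZ=\wh T_{gz}Z$ and $\operatorname{Ad}(g)\g_z=\g_{gz}$.

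Alternatively, one can argue via condition (2) of Theorem \ref{thm_cubic_torsion_zero}. Lift everything to $\cP$ at a point $u$ over $z_0$. The hypothesis means $\rho(f^h,\xi^h)\in\h$ for $f$ a section of $\F$ and $\xi$ a section of $\cH^{-2}\C$. Hence $\rho^\cC_{-1}(f,\_)=0$, which is trivially pure trace.

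There is no real obstacle here. The only care needed is to confirm that the factorization through $\wh T_zZ/\hat z$ in the definition of $\bar\psi_z$ does not affect the vanishing. It does not: the map is already zero before factoring.
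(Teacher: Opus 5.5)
Your proof is correct and matches the paper, which states this corollary as an immediate consequence of Theorem \ref{thm_cubic_torsion_zero}. The hypothesis $\psi(\hat z,\wh T_zZ)\subset\g_z$ makes $\bar\psi_z$ vanish identically, so $\widetilde\Theta_Z\subset\Theta_Z$, and condition (3) of the theorem gives vanishing of the cubic torsion; your alternative route via condition (2), showing $\rho^\cC_{-1}(f,\_)=0$ directly, is equally valid.
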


  \section{An application to cone structures of subadjoint type}
  We will now apply the results from the previous section to study $Z$-isotrivial
  cone structures, where $Z$ is a subadjoint variety. To each simple Lie algebra $\s$
  of type different from $A_n$ and $C_n$ one can associate a homogenous subvariety
  $Z\subset\P W$, called the subadjoint variety of type $\s$. A conceptual
  description of these varieties and their properties needs some structure theory of simple Lie algebras, which we will
  discuss below. In all cases the subadjoint variety turns out to be a certain compact
  Hermitian symmetric space equipped with a specific projective embedding. An explicit description of the subadjoint varieties
  discussed here can for instance be  found in Proposition 2.5 of
  \cite{Hwang-Li-1}. 
  
\subsection{Structure theory of parabolic subalgebras}
Suppose that $\l$ is a complex semisimple Lie algebra and let us fix a Cartan
subalgebra $\h\leq \l$ and a system $\Delta^0$ of simple roots for the set of roots
$\Delta$ determined by $\h$; as usual, we denote by $\l_\beta$ the $1$-dimensional
root space corresponding to $\beta\in\Delta$.

Recall that conjugacy classes of parabolic subalgebras of $\l$ are in
bijection with subsets of simple roots $\Sigma\subset \Delta^0$, which in turn are in
bijection with $|k|$-gradings on $\l$ up to conjugacy by an inner automorphism of
$\l$, see \cite{csbook} for details. In particular, given a subset
$\Sigma\subset \Delta^0$ and a root $\beta\in\Delta$, we denote by
$\textrm{ht}_\Sigma(\beta)$ the sum of all the coefficients of elements of $\Sigma$
in the expression of $\beta$ as a linear combination of simple roots. Then $\Sigma$
gives rise to a $|k|$-graded Lie algebra structure as follows:
\begin{equation}\label{k-grading}
\l=\l_{-k}\oplus \cdots\oplus \l_{-1}\oplus \l_0\oplus \l_1\oplus \cdots \oplus\l_l\quad\quad [\l_i,\l_j]\subset\l_{i+j},
\end{equation}
where $\l_i:=\oplus_{\textrm{ht}_{\Sigma}(\beta)=i}\,\l_{\beta}$ for $i\neq0$ and
$\l_0:=\h\oplus\bigoplus_{\textrm{ht}_{\Sigma}(\beta)=0}\l_{\beta}$.
Moreover,
$\l^0:=\l_0\oplus\l_l\oplus...\oplus\l_k$ is a standard parabolic subalgebra with
respect to $(\h,\Delta^0)$.  It has Levi subalgebra $\l_0$ and nilradical
$\l_+=\l_1\oplus...\oplus\l_k$.  Conversely, any parabolic subalgebra of $\l$ is
conjugate to a parabolic subalgebra that is standard with respect to $(\h,\Delta^0)$,
i.e. that arises from a subset $\Sigma\subset \Delta^0$ as described above.
Evidently, by the grading property, each $\l_{i}$ is an $\l_0$-module, and the
Killing form of $\l$ induces an isomorphism $\l_{-i}^*\cong\l_i$ of
$\l_0$-modules. For a grading as in \eqref{k-grading} we will denote by
\begin{equation}\label{Ind_filt}
\l^{i}:=\bigoplus_{j\geq i}\l_{j} 
\end{equation}
the induced $\l^0$-invariant filtration.

\subsection{Contact gradings and subadjoint varieties}\label{sec_contact_gr}
Suppose that $\s$ is a complex simple Lie algebra. Then there exists a unique, up to
inner automorphisms, \emph{contact grading} on $\s$ \cite{csbook,Yamaguchi}, that is,
a $|2|$-grading
\begin{equation}\label{c-grading}
\s=\s_{-2}\oplus \s_{-1 }\oplus \s_0\oplus \s_1 \oplus\s_{2}
\end{equation}
such that $\dim(\s_{\pm2})=1$ and the Lie bracket
$\s_{\pm1}\times\s_{\pm1}\to\s_{\pm2}$ is non-degenerate. Let
$S\subset \textrm{GL}(\s)$ be the adjoint group of $\s$ and denote by $S^0<S$ the
subgroup preserving the filtration associated to \eqref{c-grading} as defined in
\eqref{Ind_filt}. It is a parabolic subgroup with Lie algebra $\s^0$. Denote by
$S_0<S^0$ the Levi subgroup of $S^0$, which is defined as the subgroup of $S^0$
preserving the grading \eqref{c-grading}.  Consider the natural representation of
$S_0$ on $\s_{-1}$, which is well-known to be faithful. Note that the Lie bracket
$\s_{-1}\times\s_{-1}\to\s_{-2}$ can be viewed a symplectic bilinear form on
$\s_{-1}$ defined up to scale. Since the latter is evidently preserved by $S_0$, the
representation of $S_0$ on $\s_{-1}$ defines an inclusion
\begin{equation}\label{rep_subadjoint}
S_0\hookrightarrow \textrm{CSp}(\s_{-1})\hookrightarrow \textrm{GL}(\s_{-1}),
\end{equation}
where $\textrm{CSp}(\s_{-1})\subset \textrm{GL}(\s_{-1})$ denotes the subgroup that preserves the conformal class of symplectic forms on $\s_{-1}$.

Suppose now that $\s$ is different from type $A_n$ and $C_n$ (which also excludes $B_2$ and $D_3$). Then the subset $\Sigma\subset\Delta^0$ corresponding to the contact grading consists of a
single long simple root, which we denote by $\alpha$.  From the description of the
grading in terms of roots it follows readily that Lie algebra $\s_0$ of $S_0$ is reductive with $1$-dimensional center and that the representation \eqref{rep_subadjoint} 
is irreducible with highest weight $-\alpha$. Simplifying also notation for later, a subadjoint variety is defined as follows.

\begin{defin}\label{def_subadjoint} Let $\s$ be a complex simple Lie algebra of type
  different from $A_n$ and $C_n$ equipped with its contact grading \eqref{c-grading} and set $W:=\s_{-1}$ and $L:=\s_{-2}$. 
  \begin{itemize}
 \item  The
  \textit{subadjoint variety} $Z\subset\P W$ of type $\s$ is the unique closed orbit of the
  action of $S_0$ on $\P W$ induced by \eqref{rep_subadjoint}. So this is the orbit of the point $z_0\in\P W$
  defined by the root space $\s_{-\alpha}\subset\s_{-1}=W$.
  \item We denote by $b\in\Wedge^2 W^*\otimes L$ the conformal class of symplectic forms on $W$ given by the Lie bracket 
  $\s_{-1}\times\s_{-1}\rightarrow\s_{-2}$.
  \end{itemize}
  \end{defin}

  To proceed further, consider the $\s^0$-representation $\s^{-1}/\s^0$, which
  factors to the representation of $\s_0$ on $\s_{-1}$, and denote by $\q\leq \s^0$
  the stabilizer of the line $\hat z_0=\s_{-\alpha}\subset\s_{-1}\cong\s^{-1}/\s^0$,
  which is again a parabolic subalgebra of $\s$. The we have:

\begin{prop}\label{prop_contact_gr} 
  Suppose $\s$ is a complex simple Lie algebra $\neq A_n, C_n$ and fix a pair
  $(\h,\Delta^0)$ of a Cartan subalgebra and system of simple roots. Denote by
  $\alpha\in\Delta^0$ the long simple root giving rise to the contact grading on
  $\s$ and by $\q<\s^0<\s$ the stabilizer in $\s^0$ of
  $\s_{-\alpha}\subset \s_{-1}\cong \s^{-1}/\s^0$.
\begin{enumerate}
\item The standard parabolic subalgebra $\q$ corresponds to the subset
  $\Sigma_{\q}\subset \Delta^0$ consisting of $\alpha$ and all simple roots connected
  to $\alpha$ in the Dynkin diagram of $\s$, see Table
  \ref{tab_contact_roots}. Moreover, $k=5$ for the grading \eqref{k-grading} on $\s$
  induced by $\Sigma_{\q}$.
\item Consider the contact grading $\s=\bigoplus_{i=-2}^{2}\s_i$ determined by
  $\alpha$ and the grading induced by $\Sigma_{\q}$, which we denote by
  $\s=\bigoplus_{j=-5}^{5}\q_{j}$. Then $\q_{\pm1}=\q_{\pm 1}^F\oplus\q_{\pm 1}^V$,
  where $\q_{\pm 1}^F=\s_{\pm\alpha}$ and $\q_{\pm 1}^V=\s_0\cap\q_{\pm 1}$, and 
\begin{equation}\label{s-q-1}
\s_{\pm 1}=\q_{\pm4}\oplus \q_{\pm 3}\oplus\q_{\pm 2}\oplus\q_{\pm 1}^F \quad\quad \s_{\pm2}=\q_{\pm5},
\end{equation}
\begin{equation}\label{s-q-2}
\quad\quad\s_0=\q_{-1}^V\oplus\q_0\oplus\q_1^V.
\end{equation}
Moreover, $$\dim(\q_{\pm 4})=\dim(\q_{\pm 1}^F)=\dim(\q_{\pm 5})=1$$ and $$\dim(\q_{\pm 1}^V)=\dim(\q_{\pm 2})=\dim(\q_{\pm 3}).$$
\item The Lie bracket $[_-,_-];\s_{\pm 1}\times\s_{\pm 1}\rightarrow\s_{\pm 2}$ has the following properties:
\begin{enumerate}
\item it restricts to isomorphisms $\q_{\pm 1}^F\otimes\q_{\pm 1}^V\cong\q_{\pm 2}$
  of $\q_0$-modules;
\item its restrictions to $\q_{\pm 1}^F\times(\q_{\pm 2}\oplus\q_{\pm 3})$ and  $\q_{\pm 2}\times\q_{\pm 2}$ vanish;
\item it induces isomorphisms of $\q_0$-modules
$\q_{\pm 1}^F\cong \q_{\pm 4}^*\otimes \q_{\pm 5}$ and  $\q_{\pm 2}\cong \q_{\pm 3}^*\otimes \q_{\pm 5}$ .
\end{enumerate}
\item There exists a unique element $E\in\q_0$ such that $[E,X]=iX$ for any
  $X\in\q_i$.
\end{enumerate}
\end{prop}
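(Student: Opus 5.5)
The plan is to read everything off root combinatorics. We fix the highest root $\theta$ of $\s$. For a contact grading, the grading element is the coroot $H_\theta$. Hence $\s_{2}=\s_\theta$, and $\s_1$ consists of the roots $\beta$ with $\langle\beta,\theta^\vee\rangle=1$. The contact simple root $\alpha$ is the unique simple root that is not orthogonal to $\theta$, namely the node attached to the affine node in the extended Dynkin diagram.

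\emph{Step 1: the parabolic $\q$ (part (1)).} Since $z_0=\s_{-\alpha}$ is the lowest weight line of $W=\s_{-1}$, its stabilizer in $\s^0$ contains $\h$, all of $\s_+$, and those root spaces $\s_{\beta}\subset\s_0$ with $[\s_\beta,\s_{-\alpha}]=0$. For a root $\beta$ of $\s_0$, which is a combination of simple roots other than $\alpha$, the bracket $[\s_{-\beta},\s_{-\alpha}]$ is nonzero if and only if $-\alpha-\beta$ is a root. By a standard argument this occurs for a negative simple root $-\beta$ exactly when $\beta$ is adjacent to $\alpha$. Therefore $\q$ is the standard parabolic associated with $\Sigma_\q=\{\alpha\}\cup\{\text{neighbours of }\alpha\}$. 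To find $k$ we compute $\mathrm{ht}_{\Sigma_\q}(\theta)$. Using $\langle\theta,\alpha^\vee\rangle=1$ and $\langle\theta,\beta^\vee\rangle=0$ for $\beta\ne\alpha$, together with the known coefficients of $\theta$ (the case-by-case list in Table \ref{tab_contact_roots}), we obtain $5$ in every type. A cleaner route is to write $\mathrm{ht}_{\Sigma_\q}=2\,\mathrm{ht}_\alpha+\ell$, where $\ell$ is the grading defined by $\q$ on the Levi factor $\s_0$. The value $\ell(\theta)=1$ corresponds to $\q_5=\s_\theta$. The inverse step is that $\ell$ restricted to $\s_1$ takes the values $0,1,2,3$, and this needs an argument.

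\emph{Step 2: decompositions (part (2)).} On $\s_{0}$ we have $\mathrm{ht}_\alpha=0$. The $\q$-grading restricts to the grading of the reductive algebra $\s_0$ defined by the neighbours of $\alpha$. Since $\s_{-\alpha}$ is a lowest weight vector of an irreducible $\s_0$-module, and the stabilizer of its line is a maximal-type parabolic, this grading has depth one. That gives $\s_0=\q_{-1}^V\oplus\q_0\oplus\q_1^V$. On $\s_{1}$ the total grading is $2+\ell$. Lowering from the top, $\s_{\theta-\alpha}$ is the highest weight line of $\s_1$, since $\theta-\alpha$ is a root and $\s_1\cong\s_{-1}^*\otimes\s_2$. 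The lowest weight line is $\s_\alpha$. Next, we use the isomorphism $\s_1\cong\s_{-1}^*\otimes\s_2$ induced by $b$ and the symmetry $\beta\mapsto\theta-\beta$ of the roots of $\s_1$. This symmetry reverses $\ell$ around $\ell(\theta)/2$ in the appropriate normalization. From it, the $\s_1$-pieces sit in degrees $1,2,3,4$, with $\q_1^F=\s_\alpha$ and $\q_4=\s_{\theta-\alpha}$ both one-dimensional. The isomorphism $\q_{1}^F\otimes\q_1^V\to\q_2$ holds because $\q_2=[\q_1^V,\s_\alpha]$: the space $\s_1$ is generated from its lowest weight line by $\s_0\cap\q_+$. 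The map is injective because $\q_{1}^V$ acts faithfully, a property given by the lowest weight structure of $\s_1$ and by $\q_1^V$ being abelian. Consequently $\dim\q_2=\dim\q_1^V$. The remaining equality $\dim\q_2=\dim\q_3$ then follows from the pairing (3c).

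\emph{Step 3: brackets (part (3)) and the grading element (part (4)).} The bracket $\s_1\times\s_1\to\s_2=\q_5$ is nonzero only on $\q_i\times\q_j$ with $i+j=5$. This immediately gives the vanishing on $\q_1^F\times(\q_2\oplus\q_3)$, since $1+2$ and $1+3$ differ from $5$, and on $\q_2\times\q_2$, since $4\ne5$. Nondegeneracy of $b$ then forces the pairings $\q_1^F\times\q_4\to\q_5$ and $\q_2\times\q_3\to\q_5$ to be perfect, which is (3c). Statement (3a) was obtained in Step 2. The same arguments apply verbatim to negative degrees. For (4), $E$ is the grading element of the $|5|$-grading. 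It lies in the centre of $\q_0$ and is unique because $\s$ is simple, so the centre of $\q_0$ acts faithfully by the grading. The main obstacle is Step 2, namely proving cleanly that the $\ell$-values on $\s_1$ are exactly $0,\dots,3$ with one-dimensional extremes. If the conceptual argument becomes delicate, I would fall back on a direct check over the types $B_n$, $D_n$, $E_{6,7,8}$, $F_4$ and $G_2$ using the coefficients of $\theta$.
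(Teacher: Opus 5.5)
Your plan of reading everything off root combinatorics relative to $\alpha$ and $\theta$ is sound. It is also more self-contained than the paper's proof, which quotes Hwang--Neusser for \eqref{s-q-1}, \eqref{s-q-2} and (3a), and checks $k=5$ and $\dim\q_{\pm4}=1$ case by case. Your treatment of (3b), (3c) and (4) is the same as the paper's.

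There is, however, one genuine gap, and it sits in the step that carries \eqref{s-q-2}. You conclude that the grading of $\s_0$ defined by the neighbours of $\alpha$ has depth one, because $\q\cap\s_0$ stabilizes an extremal weight line and is ``of maximal type''. That inference is false in general. For $m\ge 2$, the stabilizer of the highest weight line of the standard representation of $\mathfrak{sp}(2m,\mathbb C)$ is a maximal parabolic, yet it defines the $|2|$-grading. The gap propagates to later steps:
\begin{enumerate}
\item your injectivity argument for $\q_1^V\to\q_2$ uses that $\q_1^V$ is abelian, which is exactly depth one;
\item placing $\q_2,\q_3,\q_4$ inside $\s_1$ in \eqref{s-q-1} uses that $\s_0$ only contributes degrees $-1,0,1$.
\end{enumerate}
The paper itself deduces abelianness of $\q_1^V$, i.e.\ Hermitian symmetry of $Z$, from this proposition, so it cannot be taken as input.

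The missing ingredient is that $\alpha$ is long. First, $\langle\alpha_j,\alpha^\vee\rangle=-1$ for every neighbour $\alpha_j$ of $\alpha$, which gives for every root $\beta$
\[
\textrm{ht}_{\Sigma_\q}(\beta)=3\,\textrm{ht}_{\{\alpha\}}(\beta)-\langle\beta,\alpha^\vee\rangle .
\]
Second, $|\langle\beta,\alpha^\vee\rangle|\le 1$ whenever $\beta\neq\pm\alpha$. Together these settle every degree question uniformly:
\begin{enumerate}
\item Roots of $\s_0$ have degree $-\langle\beta,\alpha^\vee\rangle\in\{-1,0,1\}$, which is depth one.
\item Roots $\beta\neq\alpha$ of $\s_1$ have degree $3-\langle\beta,\alpha^\vee\rangle\in\{2,3,4\}$, while $\alpha$ has degree $1$.
\item $\theta$ has degree $6-1=5$, so $k=5$ follows without tables.
\item A root $\beta$ of $\s_1$ of degree $4$ has $\langle\beta,\alpha^\vee\rangle=-1$. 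Then $\theta-\beta$ is a root with $\langle\theta-\beta,\alpha^\vee\rangle=2$, which forces $\theta-\beta=\alpha$. This is your symmetry argument made precise, and it gives $\q_4=\s_{\theta-\alpha}$.
\item A root $\gamma$ of $\q_1^V$ has $\langle\gamma,\alpha^\vee\rangle=-1$, so $\gamma+\alpha$ is a root. A root $\beta$ of $\q_2$ has $\langle\beta,\alpha^\vee\rangle=1$, so $\beta-\alpha$ is a root. Hence $\gamma\mapsto\gamma+\alpha$ is a bijection between the roots of $\q_1^V$ and of $\q_2$. This proves (3a) and $\dim\q_1^V=\dim\q_2$ directly, with no appeal to faithfulness or abelianness.
\end{enumerate}
So the step you flagged as the main obstacle is not the real one; your own symmetry argument already handles it once $\textrm{ht}_{\Sigma_\q}(\theta)=5$ is known.

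There are also some minor slips:
\begin{enumerate}
\item $\s_{-\alpha}$ is the \emph{highest} weight line of $W=\s_{-1}$; it is $\s_\alpha$ that is lowest in $\s_1$.
\item With $\textrm{ht}_{\Sigma_\q}=2\,\textrm{ht}_{\{\alpha\}}+\ell$, the values of $\ell$ on $\s_1$ are $-1,\dots,2$, not $0,\dots,3$.
\item $b$ induces $\s_1\cong\s_{-1}\otimes\s_2$, not $\s_{-1}^*\otimes\s_2$.
\end{enumerate}
Finally, you derive $\dim\q_2=\dim\q_3$ from the perfect pairing $\q_2\times\q_3\to\q_5$. That is the right justification, since parity of $\dim\s_1$ alone would not give the equality.
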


\begin{table}[h!]
\centering
\small
\begin{tabular}{||p{1cm}|p{2.5cm}|p{2.5cm}|p{2cm}|p{2cm}||}
\hline\
 $\s$ & $\alpha$ &$\Sigma_\q$ &$\s_0^s=\g^{s}$ & $\s_{-1}=W$\\
 \hline\hline
 &&&&\\
 $B_n$, $n\geq 3$& \dynkin[parabolic=2]{B}{}  &
 \begin{dynkinDiagram}{B}{xxx*.**}\end{dynkinDiagram} & $A_1\times B_{n-2}$ & $\mathbb C^2\otimes \mathbb C^{4n-6}$ \\
&&&&\\
 $D_n$, $n\geq 5$  &\dynkin[parabolic=2]{D}{} &\begin{dynkinDiagram}{D}{xxx*.***}\end{dynkinDiagram}& $A_1\times D_{n-2}$& $\mathbb C^2\otimes \mathbb C^{4n-8}$ \\
 &&&&\\
 $D_4$ &\dynkin[parabolic=2]{D}{4}&\begin{dynkinDiagram}{D}{xxxx}\end{dynkinDiagram}& $A_1\times A_{1}\times A_1$&$\mathbb C^2\otimes \mathbb C^2\otimes \mathbb C^2$\\
 &&&&\\
$E_6$&   \dynkin[parabolic=2]{E}{6}   &\begin{dynkinDiagram}{E}{*x*x**}
 \end{dynkinDiagram}
&$A_5$ &$\Wedge^3\mathbb C^6$\\
 &&&&\\
 $E_7$&  \dynkin[parabolic=1]{E}{7}    &\begin{dynkinDiagram}{E}{x*x****}
 \end{dynkinDiagram}
 &$D_6$ & $S_+=\mathbb C^{32}$ \\
  &&&&\\
 $E_8$&  \begin{dynkinDiagram}{E}{*******x}
 \end{dynkinDiagram}
 &\begin{dynkinDiagram}{E}{******xx}
 \end{dynkinDiagram}
 &$E_7$&$\mathbb C^{56}$\\
  &&&&\\
 $F_4$&  \begin{dynkinDiagram}{F}{x***}
 \end{dynkinDiagram}
   &\begin{dynkinDiagram}{F}{xx**}
 \end{dynkinDiagram}
 &$C_3$ &$\Wedge^3_0\mathbb C^6$\\
  &&&& \\
 $G_2$&  \begin{dynkinDiagram}{G}{x*}
 \end{dynkinDiagram}
   &\begin{dynkinDiagram}{G}{xx}
 \end{dynkinDiagram}& $A_1$& $S^3\mathbb C^2$\\
 \hline
\end{tabular}
\newline
\caption{Contact roots $\alpha$, $\Sigma_{\q}$, and the $\g^s$-module $W$}
\label{tab_contact_roots}
\end{table}

\begin{proof}
  The first statement of (1) follows from basic results about parabolic subalgebras
  arising as stabilizers of highest weight lines, see e.g. \cite[Prop.3.2.2]{csbook},
  and has also already been established in \cite[Lemma 2.14]{H-N}.  The integer $k$
  in a gradation \eqref{k-grading} always equals $\textrm{ht}_\Sigma(\theta)$, where
  $\theta$ is the highest root of $\s$, and hence the second statement of (1) follows
  by decomposing $\theta$ into simple roots and adding the coefficients of elements
  of $\Sigma_\q$, see also \cite[Prop.2.21(2)]{H-N}. The identities \eqref{s-q-1} and
  \eqref{s-q-2} were proved in (the proof of) Proposition 2.17 and Proposition 2.21
  of \cite{H-N}, where the grading components determined by $\alpha$ are denoted by 
  $\p_i$ instead of $\s_i$.  The statement (3a) was proved in Corollary 2.22 of
  \cite{H-N}, which shows in particular that $\dim(\q_{\pm 2})=\dim(\q_{\pm
    1}^V)$. Moreover, it can be easily checked directly that there is a single root
  $\gamma$ in $\Delta$, namely $\gamma=-\alpha+\theta$, such that
  $ht_{\Sigma_{\q}}(\pm\gamma)=\pm4$, which shows that $\dim(\q_{\pm4})=1$. Since the Lie
  bracket $\s_{\pm 1}\times\s_{\pm 1}\rightarrow\s_{\pm 2}$ is non-degenerate and
  skew-symmetric, $\dim(\s_{\pm 1})$ is even dimensional, which forces
  $\dim(\q_{\pm 2})=\dim(\q_{\pm 3})$ and hence completes the proof of (2). The
  statement (3b) follows from the fact that $[\q_{-i}, \q_{-j}]\subset \q_{-i-j}$ for
  any $i,j$ and that $[\s_{-1},\s_{-1}]\subset \s_{-2}=\q_{-5}$. Statement (3c)
  follows from (3b) and non-degeneracy of
  $\s_{\pm 1}\times\s_{\pm 1}\rightarrow\s_{\pm 2}$. Statement (4) is a general fact
  about $|k|$-gradings, see e.g. \cite[Proposition 3.1.2]{csbook}.
\end{proof}

For a subadjoint variety $Z\subset \mathbb P W$ let us denote by $\g:=\mathfrak{aut}(\widehat Z)$ the Lie algebra of the automorphism group 
$G:=\textrm{Aut}(\widehat{Z})\subset \textrm{GL}(W)$. The last two columns of Table \ref{tab_contact_roots} and Fact 3.1.
of \cite{MS} imply that $$\g=\s_0\subset \mathfrak c\s\p(W)\subset \mathfrak{gl}(W).$$

From Proposition \ref{prop_contact_gr}, we see that the stabilizer $\p$ of
$\hat z_0\subset \s_{-1}$ in $\g=\s_0$ equals $\q\cap \s_0=\q_0\oplus\q_{1}^V$.
This is a parabolic subalgebra of the reductive Lie algebra $\g$ with abelian
unipotent radical $\q_{1}^V$ as $[\q_1^V,\q_{1}^V]\in\q_2\cap \s_0=\{0\}$. Hence,
the subadjoint variety  $$Z\cong G/P\cong G^s/P'$$ is indeed a compact Hermitian symmetric space, where $P$ denotes the stabilizer of $z_0\in\mathbb P W$ in $G$, $G^s$ denotes the
semisimple part of $G$ and $P'=P\cap G^s$.
By part (2) of Proposition \ref{prop_contact_gr}, $W$ inherits a $\q_0$-invariant
grading, which we write as
\begin{equation}\label{gr_W}
W=W_{-4}\oplus W_{-3}\oplus W_{-2}\oplus W_{-1}=\q_{-4}\oplus \q_{-3}\oplus\q_{-2}\oplus\q_{-1}^F.
\end{equation}
The associated $\p$-invariant filtration is denoted by
\begin{equation}\label{filtr_W}
 W=W^{-4}\supset W^{-3}\supset W^{-2}\supset W^{-1}=W_{-1}
\end{equation}
where $W^{-i}=\bigoplus_{j\leq i} W_{-j}$. Using this notation, Proposition \ref{prop_contact_gr}
implies:

\begin{cor}\label{cor_contact_gr} 
The filtration \eqref{filtr_W} has the following properties
\begin{enumerate}
\item
$W^{-1}=\hat z_0=\s_{-\alpha}$ 
\item 
$W^{-2}=\q_{-1}^F\oplus\q_{-2}=[\g,\hat z_0]=\widehat T_{z_0} Z$.
\item 
$\g\cdot W^{-i}\subset W^{-i-1}$
\end{enumerate}
Moreover, $W^{-2}$ is a Lagrange subspace and $W^{-3}$ the annihilator of $W^{-1}$ with respect to the conformal class of symplectic forms $b\in \Wedge^2 W^*\otimes L$, see
Definition \ref{def_subadjoint}. 
\end{cor}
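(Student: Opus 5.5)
The plan is to read off all four claims from the $\q_0$-grading of $W$ in \eqref{gr_W} together with Proposition \ref{prop_contact_gr}, using throughout that the operator $E\in\q_0$ of part (4) of that proposition acts on $\q_j$ by $j$. Hence $W_{-i}=\q_{-i}$ (respectively $\q_{-1}^F$) is the $-i$ eigenspace of $E$ acting on $W$. Bracket relations then become degree bookkeeping: $\g=\s_0=\q_{-1}^V\oplus\q_0\oplus\q_1^V$ only has components of $E$-degrees $-1,0,1$.

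The first three claims go as follows.

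(1) This is immediate, since $W^{-1}=W_{-1}=\q_{-1}^F=\s_{-\alpha}$ by \eqref{gr_W} and part (2) of Proposition \ref{prop_contact_gr}, and $\s_{-\alpha}=\hat z_0$ by Definition \ref{def_subadjoint}.

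(3) By degree, $[\q_j,W_{-i}]\subset W_{-i+j}$ for $j\in\{-1,0,1\}$. Thus $\g\cdot W_{-i}\subset W_{-i-1}\oplus W_{-i}\oplus W_{-i+1}$, and therefore $\g\cdot W^{-i}\subset W^{-i-1}$, with the convention $W^{-5}=W$.

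(2) The computation is $[\g,\hat z_0]=[\q_{-1}^V,\q_{-1}^F]+[\q_0\oplus\q_1^V,\q_{-1}^F]$. The second summand lies in $\q_{-1}^F\oplus\q_0$. Its $\q_0$-part vanishes because $W$ has no degree-$0$ component, so it is just $\q_{-1}^F$ (the line is stabilized by $\p$, and $E$ acts on it nontrivially). For the first summand I use part (3a) of Proposition \ref{prop_contact_gr} with the minus sign: the bracket $\q_{-1}^F\otimes\q_{-1}^V\to\q_{-2}$ is an isomorphism. Strictly, (3a) is stated for the bracket $\s_{-1}\times\s_{-1}\to\s_{-2}$, and $\q_{-1}^V\subset\s_0$ is not in $\s_{-1}$. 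So at this step I expect to need the analogous root-theoretic fact, namely that $[\q_{-1}^V,\q_{-1}^F]=\q_{-2}$. I would argue it as follows. Nonvanishing comes from irreducibility of $W$ under $\g$. Surjectivity onto $\q_{-2}$ follows since $\q_{-2}\cap\s_{-1}$ is generated from $\s_{-\alpha}$ by the negative simple root vectors adjacent to $\alpha$, and these lie in $\q_{-1}^V$. Hence $[\g,\hat z_0]=W_{-1}\oplus W_{-2}=W^{-2}$. Finally, $\widehat T_{z_0}Z=[\g,\hat z_0]$ because $Z$ is the $G$-orbit of $z_0$.

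For the symplectic claims, note that $b$ takes values in $L=\s_{-2}=\q_{-5}$, so $b(W_{-i},W_{-j})\subset\q_{-i-j}$ is nonzero only if $i+j=5$. Therefore $b$ vanishes on $W^{-2}\times W^{-2}$, since there $i+j\le 4$. Moreover $\dim W^{-2}=1+\dim\q_{-2}$ equals half of $\dim W=2+2\dim\q_{-2}$ by part (2) of Proposition \ref{prop_contact_gr}, so $W^{-2}$ is Lagrangian. Similarly $b(W_{-1},W_{-j})=0$ for $j\le 3$, so $W^{-3}\subset (W^{-1})^{\perp}$. Both spaces have codimension $1$, using $\dim\q_{-4}=1$ and nondegeneracy of $b$, so they are equal. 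The main obstacle is the identification $[\q_{-1}^V,\q_{-1}^F]=\q_{-2}$ in (2); everything else is degree counting.
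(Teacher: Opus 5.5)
Your overall route is the paper's: it deduces the corollary directly from Proposition \ref{prop_contact_gr}, and your degree arguments for (1), (3), the Lagrangian property of $W^{-2}$ and $W^{-3}=(W^{-1})^{\perp}$ are correct as written. The one step that fails as stated is the surjectivity $[\q_{-1}^V,\q_{-1}^F]=\q_{-2}$ in (2). It is not true in general that $\q_{-2}$ is generated from $\s_{-\alpha}$ by the negative simple root vectors adjacent to $\alpha$. For $\s=E_8$ the contact root $\alpha_8$ has the single neighbour $\alpha_7$. Since $\alpha_8+2\alpha_7$ is not a root, these vectors produce only the line $\s_{-\alpha_8-\alpha_7}$, whereas $\dim\q_{-2}=\dim Z=27$. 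For instance, $\s_{-\alpha_8-\alpha_7-\alpha_6}\subset\q_{-2}$ needs $e_{-\alpha_6}\in\q_0$, or equivalently the non-simple root vector $e_{-\alpha_7-\alpha_6}\in\q_{-1}^V$. Nonvanishing of the bracket, which is all irreducibility gives you as you use it, does not yield surjectivity.

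You do not need a separate argument here. Since $\q_{\pm1}^V\subset\s_0$ and $\q_{\pm2}\subset\s_{\pm1}$, part (3a) of Proposition \ref{prop_contact_gr} can only refer to the bracket of $\s$ restricted to $\q_{\pm1}^F\times\q_{\pm1}^V$; the mention of $\s_{\pm1}\times\s_{\pm1}$ in its heading is a slip. Read this way, (3a) says exactly $[\q_{-1}^V,\q_{-1}^F]=\q_{-2}$, and this is what yields (2). If you want a self-contained argument, use irreducibility properly. Write $\g=\q_{-1}^V\oplus\p$ with $\q_{-1}^V$ abelian; then PBW gives $U(\g)=U(\q_{-1}^V)\,U(\p)$. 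Since $\p$ preserves $\hat z_0$, you get $W=U(\g)\cdot\hat z_0=S(\q_{-1}^V)\cdot\hat z_0$, where $S^k(\q_{-1}^V)\cdot\hat z_0\subset W_{-1-k}$. Comparing $E$-eigenspaces gives $W_{-2}=\q_{-1}^V\cdot\hat z_0$.
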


\begin{rem} Suppose $Z\subset \mathbb PW$ is as subadjoint variety as in Definition
  \ref{def_subadjoint}. Fix a point $z_0\in Z$ and let $P<G=\textrm{Aut}(\widehat Z)$
  be the stabilizer of $\hat z_0$.  Then the $\p$-invariant filtration
  \eqref{filtr_W} on $W$, which is the natural filtration on the $\p$-module $W$ as
  defined in (2) of Proposition 3.2.12 of \cite{csbook}, coincides with the
  osculating filtration, of the projective variety $Z\subset \mathbb PW$ determined
  by $z_0$, as defined for instance in Section 2.1 of \cite{LM}. The properties
  listed in Corollary \ref{cor_contact_gr} are therefore well-known in slightly
  different terms, see for instance Proposition 2.6 of \cite{Hwang-Li-1}.
\end{rem}

\subsection{Cone structures of subadjoint type}
We will now consider the following cone structures:

\begin{defin}\label{def_cone_of subadjoint_type} 
  A cone structure $\C\subset \mathbb P TM$ on a complex manifold $M$ is called of
  \emph{subadjoint type}, if it is $Z$-isotrivial for $Z\subset \mathbb PW$ a
  subadjoint variety as in Definition \ref{def_subadjoint}.
\end{defin}

The associated $G$-structure to cone structure of subadjoint type, with
$G=\textrm{Aut}(\widehat{Z})$, is a \emph{complex parabolic almost conformally
  symplectic structure}, abbreviated complex PACS-structure, see Definition 1.3 of
\cite{Cap_Salac}.  Conversely, consider a complex PACS-structure
$\mathcal P\subset FM$ of type different from $A_n$ ($C_n$ is already excluded in
\cite{Cap_Salac}) with structure group $G=\textrm{Aut}(\widehat{Z})$. Then forming
the associated bundles
$\C:=\cP\times_{G}Z\subset \cP\times_{G}\mathbb P W=\mathbb P TM$ defines a cone
structure of subadjoint type. Hence, there is a one-to-one correspondence between
cone structures of subadjoint type and complex PACS-structures of type $\neq A_n$.

In \cite{Cap_Salac} it was shown that PACS-structures admit canonical connections. To
recall that more precisely, we denote, for a contact grading \eqref{c-grading}, by
$\Wedge^3_0\s_{-1}^*\subset \Wedge^3\s_{-1}^*$ the kernel of the surjective
$\s_0$-equivariant linear map
$\Wedge^3\s_{-1}^*\rightarrow \s_{-1}^*\otimes\s_{-2}^*$ given by contraction with
$b^{-1}\in\Wedge^2\s_{-1}\otimes\s_{-2}^*$.  Note that
$ \Wedge^3_0\s_{-1}^*\otimes\s_{-2}\subset \Wedge^3\s_{-1}^*\otimes\s_{-2}$ can be
naturally viewed as a subspace of $\Wedge^2\s_{-1}^*\otimes\s_{-1}$ via the
isomorphism $\s_{-1}\cong \s_{-1}^*\otimes\s_{-2}$ of $\s_0$-modules induced by the
Lie bracket.

\begin{thm}\label{thm_can_conn}\cite[Thm.\,4.2]{Cap_Salac}
Suppose that $\s$ is a complex simple Lie algebra $\neq  C_n$ equipped with its contact grading. 
Then the 
Spencer map $\partial_S: \s_{-1}^*\otimes\s_0\rightarrow \Wedge^2\s_{-1}^*\otimes\s_{-1}$ corresponding to the representation $\s_0\hookrightarrow \mathfrak{gl}(\s_{-1})$ is injective and as $\s_0$-module
\begin{equation*}
\Wedge^2\s_{-1}^*\otimes\s_{-1}=\emph{im}(\partial_S)\oplus \ker(\square)\oplus (\Wedge^3_0\s_{-1}^*\otimes\s_{-2}).
\end{equation*}
Here $\ker(\square)\cong H^2(\s_-,\s)$ is the kernel of the Kostant Laplacian in
degree $2$ associated to the standard complex
$(\Wedge^*\s_{-}^*\otimes \s,\partial _K)$ computing the Lie algebra cohomology of
the nilpotent Lie algebra $\s_-:=\s_{-2}\oplus\s_{-1}$ with coefficients in $\s$, see
\cite[Section 4]{Cap_Salac} for more details.
\end{thm}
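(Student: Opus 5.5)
The statement is Theorem 4.2 of \cite{Cap_Salac}, so the plan follows the standard algebraic route for such decompositions: Kostant's Hodge theory for the contact grading, combined with a dimension count. Throughout, identify $\Wedge^2\s_{-1}^*\otimes\s_{-1}$ with the degree-one homogeneous part of $\Wedge^2\s_-^*\otimes\s$ that only involves $\s_{-1}$-arguments. The expected main difficulty is the complementary summand $\Wedge^3_0\s_{-1}^*\otimes\s_{-2}$: it appears because the relevant complex is $\s_-$-cohomology and not the Spencer complex of the abelian Lie algebra $\s_{-1}$.

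\textbf{Injectivity of $\partial_S$.} Its kernel is the first prolongation $\s_0^{(1)}=\{A\in\s_{-1}^*\otimes\s_0: A(X)Y=A(Y)X\}$. Because $\s_0\subset\mathfrak{csp}(\s_{-1})$, such an $A$ lies in $\s_{-1}^*\otimes\mathfrak{csp}(\s_{-1})$. The prolongation of $\mathfrak{sp}$ is $S^3\s_{-1}^*$, and the conformal factor forces the $\mathfrak{csp}$-prolongation to equal it as well. So $\s_0^{(1)}\subset S^3\s_{-1}^*\cap(\s_{-1}^*\otimes\s_0)$. This is the first prolongation of the subadjoint representation, and by the classification of irreducible prolongations (Kobayashi--Nagano, or the fact used for Merkulov--Schwachh\"ofer) it vanishes for $\s\neq A_n,C_n$. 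An alternative route uses Kostant's theorem: $H^1(\s_-,\s)$ in homogeneity $1$ vanishes except for types $A$ and $C$, and a kernel element of $\partial_S$ would yield a nontrivial class there.

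\textbf{Splitting off $\ker(\square)$.} Write $\partial_K$ for the Lie algebra differential restricted to homogeneity $1$. On the cochains $\s_{-1}^*\otimes\s_0\oplus\s_{-2}^*\otimes\s_{-1}$, and on the two-cochains in $\Wedge^2\s_{-1}^*\otimes\s_{-1}\oplus(\s_{-1}^*\wedge\s_{-2}^*)\otimes\s_{-2}$, the $\Wedge^2\s_{-1}^*\otimes\s_{-1}$-component of $\partial_K$ on $\s_{-1}^*\otimes\s_0$ is exactly $\partial_S$. The Kostant codifferential $\partial^*$ is $\s_0$-equivariant, and we have the Hodge decomposition $\operatorname{im}\partial_K\oplus\ker\square\oplus\operatorname{im}\partial^*$. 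Then we use the following facts. First, by Kostant's theorem, $H^2(\s_-,\s)$ in homogeneity $1$ is concentrated in $\Wedge^2\s_{-1}^*\otimes\s_{-1}$ (away from $G_2$ and $B_3$ one checks it is zero or sits there). Second, the map $\Wedge^2\s_{-1}^*\otimes\s_{-1}\to\s_{-1}^*\otimes\s_{-2}^*\otimes\s_{-2}$ given by bracketing into $\s_{-2}$ is contraction with $b$. Combining these, I would show that the $\s_0$-submodule $\Wedge^3_0\s_{-1}^*\otimes\s_{-2}$ is precisely the part of $\Wedge^2\s_{-1}^*\otimes\s_{-1}$ not reached by $\partial_K$-images restricted to $\s_{-1}$-arguments and not harmonic. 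Equivalently, $\partial_K$ applied to $\s_{-2}^*\otimes\s_{-1}$ produces exactly the $b$-trace parts, so after quotienting we are left with the totally trace-free three-forms.

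\textbf{Completing the proof by dimension count.} Verify that the three summands intersect trivially, using equivariance and the fact that they have distinct $\s_0$-isotypic structure, the latter read off from Kostant's computation of $H^2$. Then compare dimensions:
\[
\dim\Wedge^2\s_{-1}^*\otimes\s_{-1}=\dim(\s_{-1}^*\otimes\s_0)+\dim H^2_1+\dim\Wedge^3_0\s_{-1}^*.
\]
This identity follows from the Euler characteristic of the homogeneity-$1$ part of the Kostant complex together with injectivity of $\partial_S$.
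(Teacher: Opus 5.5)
There is a genuine gap in your independence step. You argue that $\im(\partial_S)$, $\ker(\square)$ and $\Wedge^3_0\s_{-1}^*\otimes\s_{-2}$ meet trivially because they have distinct $\s_0$-isotypic structure. This is false in the classical cases. For $\s$ of type $B_n$ or $D_n$ one has $\s_{-1}\cong\Bbb C^2\otimes\Bbb C^m$ and $\s_0\cong\mathfrak{gl}(2,\Bbb C)\oplus\mathfrak{so}(m,\Bbb C)$. Moreover $\Wedge^3(\Bbb C^2\otimes\Bbb C^m)\cong S^3\Bbb C^2\otimes\Wedge^3\Bbb C^m\oplus\Bbb C^2\otimes S_{(2,1)}\Bbb C^m$, with the $b$-trace part inside the second summand. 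Hence $\Wedge^3_0\s_{-1}^*\otimes\s_{-2}$ contains $\Bbb C^2\otimes[2,1]_0$, where $[2,1]_0$ is the trace-free part of $S_{(2,1)}\Bbb C^m$. The same module also sits in $\s_{-1}^*\otimes\mathfrak{so}(m,\Bbb C)\subset\s_{-1}^*\otimes\s_0\cong\im(\partial_S)$, with the same eigenvalue of the grading element, since both spaces have homogeneity one. So no equivariance argument can separate $\im(\partial_S)$ from $\Wedge^3_0\s_{-1}^*\otimes\s_{-2}$, and the dimension count alone then proves nothing.

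Your middle paragraph does not supply a mechanism either. The map $\Wedge^2\s_{-1}^*\otimes\s_{-1}\to\s_{-1}^*\otimes\s_{-2}^*\otimes\s_{-2}$ you invoke runs between two summands of the same cochain space, so it is not a component of $\partial_K$. Also, $\partial_K(\s_{-2}^*\otimes\s_{-1})$ only contributes $b\otimes\s_{-1}$ to $\Wedge^2\s_{-1}^*\otimes\s_{-1}$, and this already lies in $\im(\partial_S)$: for $Z\in\s_1$ and $A_Z(X):=[X,Z]$ one gets $\partial_S(A_Z)(X,Y)=[[X,Y],Z]$. Quotienting by it cannot leave the trace-free three-forms. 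You are conflating the $b$-trace part of $\Wedge^2\s_{-1}^*\otimes\s_{-1}$ with the $b$-trace part $b\wedge\s_{-1}^*$ of $\Wedge^3\s_{-1}^*$.

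The missing idea is to use the next differential. In homogeneity one the complex ends with $C^3=\Wedge^3\s_{-1}^*\otimes\s_{-2}$. The differential $\partial_K\colon C^2\to C^3$ is $\mathrm{alt}$ on $\Wedge^2\s_{-1}^*\otimes\s_{-1}$ and the inclusion $i$ of the trace part (wedge with $b$) on $\s_{-1}^*\otimes\s_{-2}^*\otimes\s_{-2}$. In addition, $j\colon\s_1\to\s_{-2}^*\otimes\s_{-1}$ is an isomorphism. This is the diagram from \cite{Cap_Salac} recalled in the proof of Lemma \ref{lem_Xi_Spencer}. The three summands then separate as follows.
\begin{enumerate}
\item For $A\in\s_{-1}^*\otimes\s_0$ write $\partial_KA=(\partial_SA,\psi_A)$ with $\psi_A\in\s_{-1}^*\otimes\s_{-2}^*\otimes\s_{-2}$. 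Then $\partial_K^2=0$ forces $\mathrm{alt}(\partial_SA)=-i(\psi_A)$, which lies in the trace part.
\item Harmonic elements lie in $\Wedge^2\s_{-1}^*\otimes\s_{-1}$ by Kostant's theorem in all cases; no exception for $G_2$ or $B_3$ is needed. They are cocycles, so $\mathrm{alt}$ kills them.
\item $\mathrm{alt}$ restricts to a non-zero multiple of the inclusion on $\Wedge^3_0\s_{-1}^*\otimes\s_{-2}$. Since $\Wedge^3\s_{-1}^*=\Wedge^3_0\s_{-1}^*\oplus b\wedge\s_{-1}^*$, this splits off the third summand.
\item If $\partial_SA=h\in\ker(\square)$, then $i(\psi_A)=0$. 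Hence $\psi_A=0$, because wedging with $b$ is injective on one-forms when $\dim\s_{-1}\geq4$. So $h=\partial_KA\in\im(\partial_K)\cap\ker(\square)=\{0\}$.
\end{enumerate}

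For spanning you have two options.
\begin{enumerate}
\item Run the argument of Lemma \ref{lem_Xi_Spencer} for arbitrary $\phi$: remove the trace-free part of $\mathrm{alt}(\phi)$, Hodge-decompose the resulting cocycle, and use $j$ to move the primitive into $\s_{-1}^*\otimes\s_0$.
\item Keep your Euler characteristic count. It needs $H^0=H^1=H^3=0$ in homogeneity one, not injectivity of $\partial_S$. The vanishing of $H^3$ there is exactly the surjectivity of $\mathrm{alt}\oplus i$ described above.
\end{enumerate}

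Your injectivity argument via prolongations is fine. For the Kostant variant, you must first note that an element of $\ker(\partial_S)$ takes values in the semisimple part of $\s_0$. Only then is it actually a $\partial_K$-cocycle, so that $H^1=0$ applies.
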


Using the notation from Section \ref{sec_contact_gr}, an immediate consequence of Theorem \ref{thm_can_conn} is:
\begin{cor}\cite[Cor.4.3]{Cap_Salac}\label{cor_can_connection}
Any PACS-structure admits an unique connection whose torsion is a section of
$\cP\times_G T$, where
$$T:=\ker(\square)\oplus (\Wedge^3_0 W^*\otimes L)\subset \Wedge^2 W^*\otimes W.$$ 
We refer to it as the canonical connection of the PACS-structure on $M$.
\end{cor}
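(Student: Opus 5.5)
This is the standard existence and uniqueness argument for a normalized connection on a $G$-structure, applied to the decomposition of Theorem \ref{thm_can_conn} with $W=\s_{-1}$, $\g=\s_0$ and $L=\s_{-2}$.

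First we record that connections on $\cP$ exist locally, and globally since $\cP$ is a holomorphic principal bundle on which we may glue using the affine structure. If a global holomorphic connection is problematic, we work locally and use uniqueness to glue; this is the usual way to obtain a canonical object. The space of connections is affine over $\Omega^1(M,\cP\times_G\g)$. Next we recall how the torsion changes. If $\hat\gamma=\gamma+\Phi$ with $\Phi$ horizontal and equivariant, viewed as a $G$-equivariant function $\cP\to W^*\otimes\g$, then formula \eqref{torsion_gamma} gives
$$\hat\tau(\xi,\eta)=\tau(\xi,\eta)+\Phi(\xi)(\theta(\eta))-\Phi(\eta)(\theta(\xi)).$$
As functions on $\cP$, this reads $\hat\tau=\tau+\partial_S\Phi$, up to a sign convention, where $\partial_S:W^*\otimes\g\to\Wedge^2W^*\otimes W$ is the Spencer map.

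Theorem \ref{thm_can_conn} gives a $G$-invariant decomposition $\Wedge^2W^*\otimes W=\operatorname{im}(\partial_S)\oplus T$. It is $G$-invariant and not merely $\g$-invariant, since $G=\textrm{Aut}(\widehat Z)$ is connected modulo possibly finite components. This point needs care: if $G$ is disconnected, one checks that the components still preserve $\ker(\square)$ and $\Wedge^3_0W^*\otimes L$, because these are defined via $b$ and the grading, both of which are $S_0$-invariant. The invariance therefore holds because $G=S_0$ acts through the adjoint action on $\s$.

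Let $\Pi:\Wedge^2W^*\otimes W\to\operatorname{im}(\partial_S)$ be the $G$-equivariant projection along $T$. Since $\partial_S$ is injective, it is an equivariant isomorphism onto its image. Set $\Phi:=-\partial_S^{-1}(\Pi\circ\tau)$. This is an equivariant function $\cP\to W^*\otimes\g$, so it defines a horizontal equivariant $\g$-valued $1$-form. The connection $\hat\gamma=\gamma+\Phi$ then has torsion $\tau-\Pi\tau\in T$. This proves existence.

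For uniqueness, suppose two connections both have torsion with values in $T$. Their difference $\Phi$ satisfies $\partial_S\Phi=\hat\tau-\tau\in T\cap\operatorname{im}(\partial_S)=0$. Injectivity of $\partial_S$ then gives $\Phi=0$. Uniqueness also lets the local constructions patch to a global connection.

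The only real subtlety is the equivariance of the decomposition under the full group $G$. Everything else is the formal Spencer-map computation.
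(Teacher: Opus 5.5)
Your proposal is correct and is the standard normalization argument behind the paper's remark that this is an immediate consequence of Theorem \ref{thm_can_conn} (as in \cite[Cor.4.3]{Cap_Salac}). The torsion changes by $\partial_S$ of the difference tensor, so the injectivity of $\partial_S$ and the $G$-invariant splitting $\Wedge^2W^*\otimes W=\im(\partial_S)\oplus T$ give uniqueness and local existence, and uniqueness then glues the local solutions.

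Two small caveats. First, in the holomorphic setting only your local-existence-plus-gluing route works, since global holomorphic connections need not exist. Second, the $G$-invariance of $T$ really rests on every element of $\textrm{Aut}(\widehat Z)$ acting by a grading-preserving automorphism of $\s$, rather than on a literal equality $G=S_0$.
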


\begin{rem}\label{Torsion-G_2}
For PACS-structures of type $\s=G_2$, one has
$\Wedge^3_0 W^*=\{0\}$ for dimensional reasons and the torsion module $T$ simply equals $\ker(\square)$.
\end{rem}

We now want to determine the characteristic torsion of the conic connection induced
by the canonical connection of the PACS-structure on the associated cone structure of
subadjoint type. We start by making some aspect of the proof of Theorem
\ref{thm_can_conn} in \cite[Thm.\,4.2]{Cap_Salac} more explicit.

\begin{lem}\label{lem_Xi_Spencer}
Suppose $Z\subset \mathbb P W$ is a subadjoint variety as in Definition \ref{def_subadjoint}. Denote by $\g:=\mathfrak{aut}(\widehat Z)\subset\mathfrak{gl}(W)$ the Lie algebra of $G:=\emph{Aut}(\widehat Z)$ and by $\partial_S:
W^*\otimes\g\rightarrow \Wedge^2 W^*\otimes W$ the Spencer map of the representation
$\g\hookrightarrow \mathfrak{gl}(W)$.  For a $G$-invariant subspace $V\subset \Wedge^2 W^*\otimes W$ the following are equivalent:
\begin{enumerate}
\item $V\subset \im(\partial _S)$
\item
$V\cap  \ker(\square)=\{0\}$  and $\emph{alt}(V)\cap \Wedge^3_0W^*\otimes L=\{0\}.$
\end{enumerate}
Here $ \emph{alt}: \Wedge^2 W^*\otimes W\rightarrow \Wedge^3W^*\otimes L$ is the composition of the isomorphism $\Wedge^2 W^*\otimes W\cong \Wedge^2 W^*\otimes W^*\otimes L$, induced by $b$, with the alternation map.
\end{lem}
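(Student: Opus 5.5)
The plan is to use the $\g$-module decomposition of Theorem \ref{thm_can_conn},
\[
\Wedge^2 W^*\otimes W=\im(\partial_S)\oplus\ker(\square)\oplus(\Wedge^3_0W^*\otimes L),
\]
and to identify the relevant projections. Since $G$ is reductive (it is $S_0$, or a group with Lie algebra $\s_0$), $G$-invariant subspaces are completely reducible. So $V$ decomposes compatibly with any decomposition into isotypic components. The main point is to understand how the map $\textrm{alt}$ interacts with the three summands.

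\textbf{Step 1: $\textrm{alt}$ kills the Spencer image.} I will show $\textrm{alt}\circ\partial_S=0$. For $\phi\in W^*\otimes\g$ we have $\partial_S\phi(X,Y)=\phi(X)Y-\phi(Y)X$. Lowering with $b$ and alternating over $X,Y,Z$ gives a cyclic sum of terms of the form $b(\phi(X)Y,Z)+b(Y,\phi(X)Z)$, up to signs. Each such term equals $\lambda(\phi(X))\,b(Y,Z)$, because $\g=\s_0\subset\mathfrak{csp}(W)$ acts by conformally symplectic maps, with $\lambda$ the conformal factor. The result is therefore $\mathrm{Alt}(\lambda\circ\phi\otimes b)$, which is of the form $\beta\wedge b$ with $\beta\in W^*\otimes L$. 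Such elements lie in the complement of $\Wedge^3_0W^*\otimes L$, namely the image of the natural inclusion $W^*\otimes L\to\Wedge^3W^*\otimes L$, $\beta\mapsto\beta\wedge b$.

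This shows only that $\textrm{alt}(\im\partial_S)\cap\Wedge^3_0W^*\otimes L=\{0\}$, which suffices. More precisely, $\textrm{alt}(\im\partial_S)\subset b\wedge(W^*\otimes L)$, and the latter is a $\g$-complement to $\Wedge^3_0 W^*\otimes L$. The same reasoning shows that $\textrm{alt}$ restricted to the summand $\Wedge^3_0W^*\otimes L$ (embedded via $W\cong W^*\otimes L$) is a nonzero multiple of the identity. I also need $\textrm{alt}(\ker\square)$ to avoid $\Wedge^3_0W^*\otimes L$. I expect this to be the main obstacle. I would extract it from the proof in \cite[Thm.\,4.2]{Cap_Salac}, where $\ker(\square)$ is characterized as the kernel of $\partial_S^*$ intersected with the kernel of the alternation component onto $\Wedge^3_0$, or else argue by comparing irreducible components: $\Wedge^3_0W^*\otimes L$ is irreducible and, as I would check, does not occur in $\ker(\square)\cong H^2(\s_-,\s)$ by Kostant's theorem.

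\textbf{Step 2: $(1)\Rightarrow(2)$.} If $V\subset\im\partial_S$, then $V\cap\ker(\square)=\{0\}$ by directness of the sum. By Step 1, $\textrm{alt}(V)\subset b\wedge(W^*\otimes L)$, which meets $\Wedge^3_0W^*\otimes L$ trivially.

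\textbf{Step 3: $(2)\Rightarrow(1)$.} Let $p_2$ and $p_3$ denote the projections onto the second and third summands. The kernel $V'$ of $p_3|_V$ is $G$-invariant, and $p_3(V)$ is an isotypic-type $G$-submodule. If $p_3(V)\neq 0$, then by complete reducibility I pick an irreducible $U\subset V$ with $p_3(U)\neq0$. Using Step 1, $\textrm{alt}(U)$ has a nonzero $\Wedge^3_0$-component, equal to a nonzero multiple of $p_3(U)$. This component is an irreducible submodule isomorphic to $U$.

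The difficulty is that the intersection with $\Wedge^3_0$ could still be zero if the other components of $\textrm{alt}(U)$ are nonzero. I would handle this by noting that $\Wedge^3_0W^*\otimes L$ and $W^*\otimes L$ have no common irreducible constituents, since their highest weights differ; this I would verify case by case. Granting this, $\textrm{alt}(U)$ lies entirely in $\Wedge^3_0W^*\otimes L$, which contradicts (2). Hence $p_3(V)=0$. Similarly, if $p_2(V)\neq0$, choose an irreducible $U\subset V$ in the isotypic component of some constituent of $\ker\square$. If that constituent does not occur in $\im\partial_S$, then $U\subset\ker\square$, again a contradiction. Here I need that $\ker\square$ and $\im\partial_S\cong W^*\otimes\g$ share no constituents, which I expect to follow from Kostant's description of $H^2$ (its weights differ from those of $W^*\otimes\g$). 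Thus $V\subset\im\partial_S$.
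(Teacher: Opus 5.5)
Your proof is correct, but it is organised differently from the paper's. For (2)$\Rightarrow$(1) the paper does not start from the three-term splitting of Theorem \ref{thm_can_conn}. It works in the homogeneity-one part of the Kostant complex. Once $\textrm{alt}(\phi)=i(\tilde\phi)$, the element $\phi-\tilde\phi$ is a $\partial_K$-cocycle. The Hodge decomposition $\ker\partial_K=\ker\square\oplus\im\partial_K$, together with the fact that $j\colon W^*\to L^*\otimes W$ is an isomorphism, then gives $\phi=\partial_S(\psi_1)+\psi_2$ with $\psi_2\in\ker\square$, and Kostant's multiplicity-one statement removes $\psi_2$. The direction (1)$\Rightarrow$(2) is only referred to \cite{Cap_Salac}.

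You instead take the splitting as given, determine $\textrm{alt}$ on each summand, and run a projection/isotypic argument. This gives an explicit proof of (1)$\Rightarrow$(2): your computation that $\textrm{alt}\circ\partial_S$ lands in $b\wedge(W^*\otimes L)$ is exactly the relation $\partial_K^2=0$ encoded in the paper's diagram. It also avoids the diagram chase, at the price of stating two ``no common constituent'' facts by hand.

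The paper uses the same two facts. Its passage from $\textrm{alt}(V)\cap\Wedge^3_0W^*\otimes L=\{0\}$ to $\textrm{alt}(\phi)=i(\tilde\phi)$ tacitly needs that $\Wedge^3_0W^*\otimes L$ and the trace part $\cong W^*\otimes L$ share no irreducible constituent. Its final step is precisely your statement that no constituent of $\ker\square$ occurs in $\im\partial_S$.

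Two small points. First, what you call the main obstacle is immediate: harmonic elements are $\partial_K$-closed, and the only component of $\partial_K$ on $\Wedge^2W^*\otimes W$ is $\textrm{alt}$, so $\textrm{alt}(\ker\square)=0$. Second, your fallback claim that $\Wedge^3_0W^*\otimes L$ is irreducible is false in general; it has two or three components for $E_6$, $B_n$, $D_n$ (see Table \ref{table_Wedge3}). This is harmless, since Kostant's multiplicity-one statement already keeps constituents of $\ker\square$ out of both other summands.
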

\begin{proof}
 The proof of Theorem \ref{thm_can_conn} in \cite[Thm.\,4.2]{Cap_Salac} shows
 immediately that (1) implies (2). Conversely, assume that (2) holds for a
 $G$-invariant subspace $V\subset \Wedge^2 W^*\otimes W$ and consider the diagram
 from the proof of \cite[Thm.\,4.2]{Cap_Salac}. This decomposes the Lie algebra
 cohomology differential $\partial_K$, mentioned in Theorem \ref{thm_can_conn}, whose cohomology in degree two is
 $\ker(\square)$ and in our notation, it reads as:
 $$
 \begin{tikzcd}
   W^* \arrow{r}{}\arrow{dr}{j} & W^*\otimes \mathfrak g\arrow{r}{\partial_S}\arrow{dr} &\Wedge^2W^*\otimes W\arrow{r}{\textrm{alt}} &\Wedge^3W^*\otimes L \\
   & L^*\otimes W \arrow{r}{} \arrow{ur} &W^*\otimes L^*\otimes L\arrow{ur}{i} &
 \end{tikzcd}
$$ Up to a non-zero factor, $i$ coincides with the inclusion of the trace part, so
for $\phi\in V$ the assumption that $\textrm{alt}(V)\cap \Wedge^3_0W^*\otimes
L=\{0\}$ implies $\textrm{alt}(\phi)=i(\tilde\phi)$ for some $\tilde\phi\in
W^*\otimes L^*\otimes
L$, so
$$\phi-\tilde\phi\in\ker(\partial_K)=\ker(\square)\oplus\im(\partial_K).$$
Hence, we can write $\phi-\tilde\phi=\partial_K(\psi_1)+\psi_2$ for some
$\psi_2\in\ker(\square)$ and, since $j$ is an isomorphism and $\partial_K^2=0$,
we can moreover assume $\psi_1$ to be an element of $W^*\otimes\g$. The diagram now
implies that $\phi=\partial_S(\psi_1)+\psi_2$. Hence, $V\subset
\im(\partial_S)\oplus\ker(\square)\subset\Wedge^2W^*\otimes W$. Now Kostant's Theorem
\cite{Kostant},\cite[Theorem 3.3.5]{csbook} shows in particular that all irreducible
components of $\ker(\square)$ appear with multiplicity $1$ in $\Wedge^2W^*\otimes
W$. Hence, $G$-invariance of $V$ and $V\cap \ker(\square)=\{0\}$ imply that
$V\subset\im(\partial_S)$.
\end{proof}

\begin{thm}\label{thm_Xi_Spencer} 
  In the setting of Lemma \ref{lem_Xi_Spencer} suppose that $Z$ is not of type
  $B_3$. Then the $G$-invariant subspace $\Xi_Z\subset \Wedge^2 W^*\otimes W$ as
  defined in \eqref{Xi} is contained in the image of $\partial_S$.

\end{thm}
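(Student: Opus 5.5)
By Lemma \ref{lem_Xi_Spencer} it suffices to verify two things for $V=\Xi_Z$: that $\Xi_Z\cap\ker(\square)=\{0\}$, and that $\textrm{alt}(\Xi_Z)\cap\Wedge^3_0W^*\otimes L=\{0\}$. Both are $G$-invariant statements. Since $\Xi_Z$ is the largest $G$-invariant subspace of $\Xi_Z^H=\{\phi:\phi(W^{-1},W^{-2})\subset W^{-2}\}$, each reduces to showing that no nonzero $G$-invariant subspace of $\ker(\square)$, respectively of $\Wedge^3_0W^*\otimes L$, is contained in $\Xi_Z^H$. It is enough to check this on irreducible components, because each irreducible component of $\ker(\square)$ occurs with multiplicity one (Kostant). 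For an irreducible $G$-module $U$ embedded in $\Wedge^2W^*\otimes W$, I would test $U\subset\Xi_Z^H$ on a single convenient vector. The natural choice is a lowest weight vector with respect to the grading by $E$ from Proposition \ref{prop_contact_gr}(4). Such a vector has the most negative possible homogeneity, so it is the vector most likely to map $W_{-1}\times W_{-2}$ into $W_{-4}$ or $W_{-3}$, that is, out of $W^{-2}$.

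For the totally trace-free part, I identify $\phi\in\Wedge^3_0W^*\otimes L$ with $\phi(x,y)=b^{-1}(\Phi(x,y,\_))$. Corollary \ref{cor_contact_gr} says that $W^{-3}$ is the annihilator of $W^{-1}$ and that $W^{-2}$ is Lagrangian. Hence $\phi(W^{-1},W^{-2})\subset W^{-2}$ holds if and only if $\Phi(W_{-1},W_{-2},W^{-2})=0$. By the gradings and the duality $W_{-i}^*\cong W_{-5+i}\otimes L^*$, this amounts to vanishing of the components of $\Phi$ in $W_{-1}^*\otimes W_{-2}^*\otimes W_{-2}^*$ and in $W_{-1}^*\wedge W_{-2}^*\otimes W_{-1}^*$. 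The latter vanishes automatically, since $W_{-1}$ is one-dimensional and $\Phi$ is alternating. So I need a nonzero $G$-equivariant $\Phi$ whose restriction to $W_{-1}\otimes\Wedge^2W_{-2}$ is nonzero. I would take $\Phi$ to be a lowest weight vector of an irreducible summand of $\Wedge^3_0W^*$, where $W^*$ has lowest $E$-weight corresponding to $W_{-4}^*\cong W_{-1}\otimes L^*$. Then I would examine how the $\g$-action moves it to $W_{-1}^*\otimes\Wedge^2W_{-2}^*$.

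Concretely, $W_{-2}\cong\q^F_{-1}\otimes\q^V_{-1}$. The question becomes whether the $\q_0$-module $\Wedge^2\q^V_{-1}$, twisted appropriately, meets the given $G$-component. One can decide this case by case from Table \ref{tab_contact_roots}, since $\q_{-1}^V$ is the tangent representation of the Hermitian symmetric space $Z$. I expect the special case to surface here. For $B_3$ one has $W=\mathbb C^2\otimes\mathbb C^6$ with $\g^s=A_1\times A_1$. In that case $\Wedge^2\q^V_{-1}$ is too small (essentially one-dimensional), and an invariant piece survives inside $\Xi_Z$.

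For $\ker(\square)$ I would use Kostant's theorem. Each irreducible component is generated by a lowest weight vector of the explicit form $\s_{-\beta_1}^*\wedge\s_{-\beta_2}^*\otimes\s_{w\cdot\lambda}$, indexed by length-two Weyl elements $w$ of the contact parabolic. From this I read off whether the vector maps $W_{-1}\times W_{-2}$ outside $W^{-2}$, again in terms of $\q$-degrees. The main obstacle is this case-by-case verification, specifically confirming that every harmonic component fails the condition and that the failure only breaks down for $B_3$. I would organise the check uniformly via the $\Sigma_\q$-heights of the roots involved, and fall back on explicit root computations for the exceptional types and $D_4$.
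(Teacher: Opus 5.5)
Your treatment of $\Xi_Z\cap\ker(\square)=\{0\}$ is essentially the paper's. Kostant's lowest weight vectors lie in $W_{-1}^*\wedge W_{-2}^*\otimes W_{-4}$, so none of them lies in $\Xi_Z^H$. This works uniformly, $B_3$ included.

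The gap is in the second condition of Lemma~\ref{lem_Xi_Spencer}. You replace $\textrm{alt}(\Xi_Z)\cap\Wedge^3_0W^*\otimes L=\{0\}$ by the claim that no nonzero $G$-invariant subspace of $\Wedge^3_0W^*\otimes L$, embedded in $\Wedge^2W^*\otimes W$ via $b$, lies in $\Xi_Z^H$. That is, you prove $\Xi_Z\cap(\Wedge^3_0W^*\otimes L)=\{0\}$, which is strictly weaker than what is needed. Under $\Wedge^2W^*\otimes W\cong(\Wedge^3W^*\oplus\mathbb S_{(2,1)}W^*)\otimes L$, the map $\textrm{alt}$ is, up to a factor, just the projection to the first summand. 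Irreducible components of $\Wedge^3_0W^*$ can recur in $\mathbb S_{(2,1)}W^*$. For example, for $D_4$ with $W=\Bbb C^2\otimes\Bbb C^2\otimes\Bbb C^2$, each of the three components of $\Wedge^3_0W$ (of the form $S^3\Bbb C^2\otimes\Bbb C^2\otimes\Bbb C^2$ up to permuting factors) appears once more in $\mathbb S_{(2,1)}W$. So $\Xi_Z$ could contain a diagonal copy of such a module. It would not lie in $\Wedge^3_0W^*\otimes L$, yet its alternation would be nonzero there, and your test never sees it. Concretely, for a general $\phi\in\Xi_Z$ the condition at $z_0$ only gives $b(\phi(w_{-1},w_{-2}),w'_{-2})=0$. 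But $\textrm{alt}(\phi)(w_{-1},w_{-2},w'_{-2})$ also contains $b(\phi(w_{-2},w'_{-2}),w_{-1})$, on which $\Xi_Z^H$ imposes nothing. Your equivalence ``$\phi(W^{-1},W^{-2})\subset W^{-2}$ iff $\Phi(W_{-1},W_{-2},W^{-2})=0$'' holds only for totally skew $\phi$.

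What is missing is the use of the defining condition of $\Xi_Z$ at points of $Z$ other than $z_0$. The paper shows directly that for every $\phi\in\Xi_Z$, $\textrm{alt}(\phi)$ annihilates the highest weight vectors of $\Wedge^3_0W$. For $\s\neq B_3$ these all lie in $W_{-1}\wedge W_{-2}\wedge W_{-2}$. Since $W^{-2}$ is Lagrangian and $W^{-3}$ is the annihilator of $W^{-1}$, the claim reduces to $\phi(w_{-2},w'_{-2})\in W^{-3}$.
\begin{enumerate}
\item Pass to $z_1=\exp(e_{-\beta})\cdot z_0$, where $\hat z_1=\Bbb C(w_{-1}+w_{-2})\subset W^{-2}$.
\item Then $\wh T_{z_1}Z=[\g,\hat z_1]\subset W^{-3}$, and $w'_{-2}\in\wh T_{z_1}Z$.
\item Apply $\phi(\hat z_1,\wh T_{z_1}Z)\subset\wh T_{z_1}Z$ together with $\phi(W^{-1},W^{-2})\subset W^{-2}$ to conclude.
\item When $w_{-2},w'_{-2}$ come from two roots $\beta,\beta'$ both adjacent to $\alpha$, use the four points $\exp(\pm e_{-\beta})z_0$ and $\exp(\pm e_{-\beta'})z_0$, then add and subtract the resulting relations.
\end{enumerate}
The identity $\exp(e_{-\beta})w_{-1}=w_{-1}+w_{-2}$ requires $\alpha+2\beta$ not to be a root. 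This is exactly where $B_3$ drops out: there $\alpha_2+2\alpha_3$ is a root, and $\Wedge^3_0W$ acquires a highest weight vector in $W_{-1}\wedge W_{-2}\wedge W_{-3}$. The exclusion does not come from the size of $\Wedge^2\q^V_{-1}$, as you suggest.
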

\begin{proof}
For a subadjoint variety $Z\subset \P W$ of type $\s\neq B_3$ we verify that $\Xi_Z$ satisfies condition (2) of Lemma \ref{lem_Xi_Spencer}.  
To do so, let us fix a Cartan subalgebra and subsystem of simple roots $(\h,\Delta^0)$ of $\s$ and
  consider the contact grading determined by $\alpha\in\Delta^0$ as in Proposition
  \ref{prop_contact_gr}, where $W$ and $\g$ are realized as $\s_{-1}$ and $\s_0$. As
  in Section \ref{sec_contact_gr} denote by $P<G$ the stabilizer of the point
  $z_0\in\mathbb PW$ determined by $\s_{-\alpha}$ so that
  $Z=G z_0\cong G/P\subset \mathbb P W$ and denote the induced grading and filtration
  on $W$ as in \eqref{gr_W} and \eqref{filtr_W}.

\textbf{Claim 1}: $\Xi_Z\cap \ker(\square)=\{0\}$.\newline
Kostant's Theorem \cite{Kostant},
\cite[Thm.\,3.3.5]{csbook} gives a description of the $\s_0$-submodule
$$\ker(\square)\cong H^2(\s_-,\s)\subset \Wedge^2 W^*\otimes W$$ in terms of lowest
weights and lowest weight vectors, and implies that $\ker(\square)$ is also
isomorphic to the second homology $H_2(\s_+,\s)$ of the Lie algebra
$\s_+:=\s_1\oplus\s_2$ with coefficients in $\s$. 
More explicitly, it shows that the lowest weight vectors of the
 irreducible components 
 of $\ker(\square)$ are given by
 \begin{equation}\label{Kostant_lowest_weight_vec}
 \s_{\alpha}\wedge\s_{\alpha+\beta}\otimes\s_{-\theta+\alpha}\subset \q_{1}^F\wedge\q_{2}\otimes\q_{-4}\cong W_{-1}^*\wedge W_{-2}^*\otimes W_{-4},
 \end{equation}
 where $\beta$ is any root connected to $\alpha$ in the Dynkin diagram of $\s$ and
 $\theta$ denotes the highest root of $\s$, see also Proposition 2.10 (2) of
 \cite{H-N}. By Corollary \ref{cor_contact_gr}, elements $\phi\in \Xi_Z$ have in
 particular the property that $\phi(W^{-1}, W^{-2})$ is contained in $W^{-2}$. Hence
 \eqref{Kostant_lowest_weight_vec} shows that none of the lowest weight vectors of
 the irreducible components of $\ker(\square)$ can be contained in $\Xi_Z$, which
 readily implies Claim 1.  

\textbf{Claim 2}: $\textrm{alt}(\Xi_Z)\cap \Wedge^3_0W^*\otimes L=\{0\}$.\newline
It is sufficient to show for $\phi\in\Xi_Z$ the alternation $\textrm{alt}(\phi)$ annihilates any highest weight vector of the irreducible components of 
the $\g$-representation $\Wedge^3_0W$. For $Z$ of type $\s=G_2$ there is nothing to show as $\Wedge^3_0W=\{0\}$, see Remark \ref{Torsion-G_2}. In the other cases, 
the $\g$-representation $\Wedge^3_0W$ is irreducible for $Z$ of type $\s=E_7, E_8, F_4$, has two irreducible components for $\s=E_6, B_{n\geq4}, D_{n\geq 6}$ and three for $\s=D_4,D_5$.
In the following table, we list the form of the highest weight vectors of the
irreducible components of the $\g$-module $\Wedge^3_0W$ in terms of root spaces of
$\s$, where we use the Bourbaki ordering for the simple roots $\alpha_i$.

\begin{table}[h!]
\centering
\begin{tabular}{||c |c | c c||} 
\hline
 $\s$ & $\g^s$ && $\Wedge^3_0 W$ \\ [0.5ex] 
 \hline\hline
 $E_6$& $A_5$ && $\s_{-\alpha_2}\wedge \s_{-\alpha_2-\alpha_4}\wedge \s_{-\alpha_2-\alpha_4-\alpha_3}$\\ 
 
 &  &&  $\s_{-\alpha_2}\wedge \s_{-\alpha_2-\alpha_4}\wedge \s_{-\alpha_2-\alpha_4-\alpha_5}$ \\
 \hline
 $E_7$ & $D_6$ && $\s_{-\alpha_1}\wedge\s_{-\alpha_1-\alpha_3}\wedge\s_{-\alpha_1-\alpha_3-\alpha_4}$  \\
 \hline
 $E_8$& $E_7$ & & $\s_{-\alpha_8}\wedge\s_{-\alpha_8-\alpha_7}\wedge\s_{-\alpha_8-\alpha_7-\alpha_6}$   \\
 \hline
 $F_4$ & $C_3$ & &$\s_{-\alpha_1}\wedge\s_{-\alpha_1-\alpha_2}\wedge\s_{-\alpha_1-\alpha_2-\alpha_3}$  \\ 
 \hline
 $B_{n\geq 4}/D_{n\geq 6}$& $A_1\times B_{n-2}/D_{n-2}$&&$\s_{-\alpha_2}\wedge \s_{-\alpha_2-\alpha_3}\wedge \s_{-\alpha_2-\alpha_3-\alpha_4}$ \\
 &&&$\s_{-\alpha_2}\wedge \s_{-\alpha_1-\alpha_2}\wedge \s_{-\alpha_2-\alpha_3}$ \\
 \hline
 $D_5$&$A_1\times A_3$&& $\s_{-\alpha_2}\wedge \s_{-\alpha_2-\alpha_3}\wedge \s_{-\alpha_2-\alpha_3-\alpha_4}$ \\
 &&&$\s_{-\alpha_2}\wedge \s_{-\alpha_2-\alpha_3}\wedge \s_{-\alpha_2-\alpha_3-\alpha_5}$\\ 
&&& $\s_{-\alpha_2}\wedge \s_{-\alpha_1-\alpha_2}\wedge \s_{-\alpha_2-\alpha_3}$\\
\hline
$D_4$& $A_1\times A_1\times A_1$&&$\s_{-\alpha_2}\wedge \s_{-\alpha_1-\alpha_2}\wedge \s_{-\alpha_2-\alpha_3}$\\
&&&$\s_{-\alpha_2}\wedge \s_{-\alpha_1-\alpha_2}\wedge \s_{-\alpha_2-\alpha_4}$\\
&&&$\s_{-\alpha_2}\wedge \s_{-\alpha_2-\alpha_3}\wedge \s_{-\alpha_2-\alpha_4}$\\
 \hline
  \end{tabular}
  \newline
\caption{Highest weight vectors of $\Wedge^3_0W$}\label{table_Wedge3}
 \end{table}
 Hence, we see from Table \ref{tab_contact_roots} and Proposition
 \ref{prop_contact_gr}, that all highest weight vectors are are elements of
 $W_{-1}\wedge W_{-2}\wedge W_{-2}\subset \Wedge ^3_0W$.  Consider now $\phi\in\Xi_Z$
 and let $w_{-1}\wedge w_{-2}\wedge w'_{-2}\in W_{-1}\wedge W_{-2}\wedge W_{-2}$ be a
 highest weight vector of an irreducible component of $ \Wedge ^3_0W$.  Since
 $\phi(W^{-1}, W^{-2})\subset W^{-2}$ and $W^{-2}\subset W$ is a Lagrange subspace
 with respect to $b$ by Corollary \ref{cor_contact_gr},
 $\textrm{alt}(\phi)(w_{-1}, w_{-2},w'_{-2})=0$ if and only if
 $b(\phi(w_{-2}, w'_{-2}),w_{-1})=0$. By Corollary \ref{cor_contact_gr}, the
 annihilator of $W^{-1}$ with respect to $b$ equals $W^{-3}$.  Hence, to prove Claim
 2 it remains to show that for any highest weight vector
 $w_{-1}\wedge w_{-2}\wedge w_{-2}'\in W_{-1}\wedge W_{-2}\wedge W_{-2}$ of an
 irreducible component of $\Wedge^3_0W$ one has $\phi(w_{-2},w_{-2}')\subset W^{-3}$
 for all $\phi\in\Xi_Z$.
In view of Table \ref{table_Wedge3}, there are only two cases to consider.
 \\
 \textbf{Case 1}: The irreducible component is generated by a highest weight vector
 $$w_{-1}\wedge w_{-2}\wedge
 w'_{-2}\in\s_{-\alpha}\wedge\s_{-\alpha-\beta}\wedge\s_{-\alpha-\beta-\gamma},$$
 where $\alpha$ is the contact root, and in the Dynkin diagram, $\beta$ is connected
 to $\alpha$ and $\gamma$ is connected to $\beta$, but not to $\alpha$. Since in all
 cases any simple root connected to $\alpha$ is connected by a single line in the
 Dynkin diagram, see Table \ref{tab_contact_roots}, $\alpha+2\beta$ can not be a root
 of $\s$. This implies that there exists $e_{-\beta}\in \s_{-\beta}\subset\s_0$ such
 that
\begin{equation*}
\exp(e_{-\beta})(w_{-1})=w_{-1}+[e_{-\beta}, w_{-1}]=w_{-1}+w_{-2}\in \widehat Z.
\end{equation*}
Write $z_1\in Z$ for the point determined by $\exp(e_{-\beta})(w_{-1})$.
Since  $\hat z_1=\mathbb C(w_{-1}+w_{-2})\subset W^{-2}$, 
we conclude that the affine tangent space $\widehat T_{z_1}Z=[\s_0, \hat z_1]$ at $z_1$ is contained in $[\s_0, W^{-2}]=W^{-3}$ by Corollary \ref{cor_contact_gr}. Moreover, since $\gamma$ is not connected to $\alpha$, there exists 
$e_{-\gamma}\in\s_{-\gamma}\subset \s_0$ such that
$$[e_{-\gamma}, w_{-1}+w_{-2}]=[e_{-\gamma},w_{-2}]=w_{-2}'\in \widehat T_{z_1}Z.$$ Hence, $$\phi(\hat z_1, \widehat T_{z_1}Z)\subset \widehat T_{z_1}Z\subset W^{-3}$$ implies in particular that
$\phi(w_{-1}+w_{-2}, w_{-2}')$ is contained in $W^{-3}$, which together with
$\phi(W^{-1}, W^{-2})\subset W^{-2}$ shows that $\phi(w_{-2}, w_{-2}')\in W^{-3}$.
\\
\textbf{Case 2}: The irreducible component is generated by a highest weight
vector
$$w_{-1}\wedge w_{-2}\wedge w_{-2}'
\in\s_{-\alpha}\wedge\s_{-\alpha-\beta}\wedge\s_{-\alpha-\beta'},$$ where $\beta$ and
$\beta'$ are simple roots connected to $\alpha$, but not to each other, in the Dynkin
diagram. As in Case 1, since $\alpha+2\beta$ and $\alpha+2\beta'$ are not roots, we
conclude that there exists $e_{-\beta}\in\s_{-\beta}$ and
$e_{-\beta'}\in\s_{-\beta'}$ such that
\begin{align*}
\exp(\pm e_{-\beta} )(w_{-1})&=w_{-1}\pm [e_{-\beta}, w_{-1}]=w_{-1}\pm w_{-2} \\
\exp(\pm e_{-\beta'} )(w_{-1})&=w_{-1}\pm [e_{-\beta'}, w_{-1}]=w_{-1}\pm w_{-2}' 
\end{align*}
are elements of $\widehat Z$. We denote the corresponding points in $Z$ by $z_{\pm}$ respectively $z'_{\pm}$. Moreover, again as in Case 1, since $\hat z_{\pm }$ and $\hat z'_{\pm}$ are 
in $W^{-2}$, we conclude that their affine tangent spaces $\widehat T_{z_{\pm}} Z$ and  $\widehat T_{z'_{\pm}} Z$ are contained in $W^{-3}$.  Since $\beta+\beta'$ is not a root, this shows in particular 
that 
\begin{align*}
[e_{-\beta'}, w_{-1}\pm w_{-2}]&=w_{2}'\pm w\in  \widehat T_{z_{\pm}} Z\subset W^{-3}\\ 
[e_{-\beta}, w_{-1}\pm w_{-2}']&=w_{2}\pm w\in \widehat T_{z'_{\pm}} Z\subset W^{-3} 
\end{align*}
for some $w\in\s_{-\alpha-\beta-\beta'}$. For $\phi\in\Xi_Z$ we therefore must have
\begin{align}\label{eq1}
\phi(w_{-1}\pm w_{-2}, w_{-2}'\pm w)= &\phi(w_{-1},w'_{-2})+\phi(w_{-2}, w)\\\nonumber
&\pm (\phi(w_{-1}, w)+\phi(w_{-2}, w_{-2}'))\in W^{-3} 
\end{align}
and
\begin{align}\label{eq2}
\phi(w_{-1}\pm w_{-2}', w_{-2}\pm w)=&\phi(w_{-1}, w_{-2})+\phi(w_{-2}', w)\\\nonumber
&\pm (\phi(w_{-1}, w)+\phi(w_{-2}', w_{-2}))\in W^{-3}
\end{align}
Equation \eqref{eq1} implies that $\phi(w_{-1}, w)+\phi(w_{-2}, w_{-2}')\in W^{-3}$ and equation \eqref{eq2} 
that $\phi(w_{-1}, w)+\phi(w_{-2}', w_{-2})=\phi(w_{-1}, w)-\phi(w_{-2}, w_{-2}')\in W^{-3}$. Hence, $\phi(w_{-2}, w_{-2}')\in W^{-3}$.
\end{proof}

\begin{rem}
  In Theorem \ref{thm_Xi_Spencer} we have excluded the case $\s=B_3$. This case is
  different from the cases $B_{n\geq 4}$, since $\alpha_2+2\alpha_3$ is root. Indeed,
  for $B_3$, $\Wedge^3_0W$ contains an irreducible component generated by a highest
  weight vector in
  $\s_{-\alpha_{2}}\wedge\s_{-\alpha_{2}-\alpha_3}\wedge\s_{-\alpha_{2}-2\alpha_3}\in
  W_{-1}\wedge W_{-2}\wedge W_{-3}$ .
\end{rem}

Before we analyze the consequences of Theorem \ref{thm_Xi_Spencer} let us establish
another fact about the projective geometry of a subadjoint variety.

\begin{lem}\label{prop1_projgeom_subadjoint}
Suppose $Z\subset \mathbb P W$ is a subadjoint variety as in Definition \ref{def_subadjoint}. Denote 
by $\mathcal O(1)=\mathcal O(1)\vert_Z$ the restriction to $Z$ of the hyperplane line bundle of projective space $\mathbb P W$. Then the following folds: 
\begin{enumerate}
\item $H^0(Z,\mathcal O (1))=W^*$; 
\item the homomorphism
$\mathfrak{aut}(\widehat Z)\otimes W^*\rightarrow H^0(Z, TZ\otimes\mathcal O(1))$
induced by the restriction homomorphism $\mathfrak{aut}(\widehat Z)\rightarrow H^0(Z,TZ)$ is surjective.
\end{enumerate}
\end{lem}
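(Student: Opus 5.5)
We prove both statements by passing to the homogeneous description $Z\cong G/P\cong G^s/P'$ of the subadjoint variety as a compact Hermitian symmetric space, and computing the relevant cohomology with the Borel--Weil(--Bott) theorem. Throughout we use the realization $W=\s_{-1}$, $\g=\s_0$ from Section \ref{sec_contact_gr}, and the filtration \eqref{filtr_W}.

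\textbf{Part (1).} The restriction map $W^*=H^0(\P W,\cO(1))\to H^0(Z,\cO(1))$ is injective, because $Z$ is linearly nondegenerate: $W$ is an irreducible $\g$-module and $\widehat Z$ spans a nonzero invariant subspace. For surjectivity, note that $\cO(-1)|_Z$ is the homogeneous line bundle $G\times_P\hat z_0$. Here $\hat z_0=\s_{-\alpha}$ is the lowest weight line of $W$, with weight $-\alpha$. Hence $\cO(1)=G\times_P\hat z_0^*$ is the line bundle attached to the $P$-character $\alpha$ restricted to $\g^s$. By Borel--Weil, $H^0(Z,\cO(1))$ is the irreducible $\g$-module whose lowest weight is dual to this character, which is exactly $W^*$. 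Alternatively, one can cite projective normality of the minimal embedding of a Hermitian symmetric space (cf.\ \cite{Hwang-Li-1}), since $Z\subset\P W$ is the minimal equivariant embedding.

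\textbf{Part (2).} The bundle $TZ\otimes\cO(1)$ is the homogeneous bundle $G\times_P(\g/\p\otimes\hat z_0^*)$. By \eqref{iso_iso}, this is $G\times_P(\hat z_0^*\otimes\hat z_0^*\otimes \widehat T_{z_0}Z/\hat z_0)$, i.e.\ the bundle induced by the $\p$-module $\q_1^{V}{}^{*}\!\!\cong\q_{-1}^V$ twisted by $\alpha$. Because $\q_1^V$ is abelian and $\g/\p\cong\q_{-1}^V$ is an irreducible $\q_0$-module, this bundle is irreducible homogeneous. Bott's theorem then gives that $H^0(Z,TZ\otimes\cO(1))$ is either zero or a single irreducible $\g$-module $U$. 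That module is determined by the lowest weight of $\q_{-1}^V\otimes\hat z_0^*$, provided that weight is $\p$-antidominant in the appropriate sense.

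We then show that $U$ is a quotient of $\g\otimes W^*$ and that the natural map hits it. The map in question sends $X\otimes\lambda$ to the section $z\mapsto \zeta_X(z)\otimes\lambda|_{\hat z}$. It is $G$-equivariant, so its image is a $G$-submodule of $H^0(Z,TZ\otimes\cO(1))$. If that space is irreducible, it suffices to show the map is nonzero. This is clear: pick $X\in\q_{-1}^V\subset\g$, which acts nontrivially at $z_0$, and $\lambda\in W^*$ with $\lambda(\hat z_0)\neq0$. The resulting section is nonzero at $z_0$.

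\textbf{Main obstacle.} The hardest step is justifying that $H^0(Z,TZ\otimes\cO(1))$ is irreducible, or more precisely, that it contains no component outside the image. For this we plan to use a stronger fact: the evaluation map $H^0\to (TZ\otimes\cO(1))_{z_0}$ is injective on $\q_+$-invariants. The reason is that a highest weight vector of any irreducible summand of sections is determined by its value at the base point, since the $P$-orbit of $z_0$ is open in the opposite big cell. Combined with the irreducibility of the $\p$-module $\q_{-1}^V\otimes\hat z_0^*$, this shows there is at most one highest weight line, hence irreducibility.

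If this argument runs into trouble in a particular type, the fallback is the known vanishing/irreducibility results for $H^0(TZ\otimes\cO(1))$ on Hermitian symmetric spaces. Specifically, one can use the prolongation computation $\mathfrak{aut}(\widehat Z)^{(1)}=0$ together with the identification of $H^0(TZ\otimes\cO(1))$ with $\g\otimes W^*$ modulo lower terms, in the spirit of \cite{Hwang-Li-2} and \cite{Hwang-Li-1}. If necessary, this would be checked case by case using Table \ref{tab_contact_roots}.
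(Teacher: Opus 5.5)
Your overall strategy is the same as the paper's: Borel--Weil for both parts, $G$-equivariance of $\g\otimes W^*\to H^0(Z,TZ\otimes\mathcal O(1))$, and a nonvanishing check. However, part (2) rests on a claim that is false whenever $\g^s$ is not simple. You assert that $\g/\p\cong\q_{-1}^V$ is an irreducible $\q_0$-module, so that $TZ\otimes\mathcal O(1)$ is an irreducible homogeneous bundle and its space of sections is zero or irreducible.

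This fails for the types $B_n$ (including $B_3$), $D_n$ and $D_4$ in Table \ref{tab_contact_roots}. There $Z\cong\mathbb P^1\times Q^m$ or $(\mathbb P^1)^3$, and $\q_{-1}^V$ splits into one irreducible piece per simple factor of $\g^s$. For $B_n$ and $D_n$ the pieces are $\s_{-\alpha_1}$ and the tangent space of the quadric factor. For $D_4$, where $\q_0=\h$, it is $\s_{-\alpha_1}\oplus\s_{-\alpha_3}\oplus\s_{-\alpha_4}$. Accordingly $H^0(Z,TZ\otimes\mathcal O(1))$ has two or three irreducible components. This is exactly what the paper's proof records: one Cartan product of each simple factor of $\g^s$ with $W^*$. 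In that situation, showing that the map is nonzero for a single chosen $X\in\q_{-1}^V$ does not give surjectivity.

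Your ``main obstacle'' argument inherits the same error, since it also uses irreducibility of $\q_{-1}^V\otimes\hat z_0^*$. As written it also invokes the ``$P$-orbit of $z_0$'', which is just $\{z_0\}$. What is open is the orbit of the unipotent subgroup with Lie algebra $\q_{-1}^V$, or of a suitable Borel subgroup. The fallback in your last paragraph is too vague to close the gap.

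The repair is short. Write $\g=\mathfrak z\oplus\bigoplus_j\g_j$ with $\g_j$ simple, so that $Z=\prod_j Z_j$ and $TZ\otimes\mathcal O(1)=\bigoplus_j \mathrm{pr}_j^*TZ_j\otimes\mathcal O(1)$.
\begin{itemize}
\item The center $\mathfrak z$ acts on $W$ by scalars, so it induces the zero vector field on $Z$.
\item The fundamental field of $X\in\g_j$ is tangent to the $j$-th factor. Hence the map is block diagonal, given by $\g_j\otimes W^*\to H^0(Z,\mathrm{pr}_j^*TZ_j\otimes\mathcal O(1))$.
\item Each summand bundle is induced by the irreducible $\p$-module $\g_j/(\g_j\cap\p)\otimes\hat z_0^*$. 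By Borel--Weil its space of sections is irreducible or zero.
\item Your nonvanishing argument, applied with $X\in\g_j\cap\q_{-1}^V$ and $\lambda(\hat z_0)\neq 0$, then gives surjectivity block by block.
\end{itemize}
With this modification your proof becomes a fleshed-out version of the paper's.

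A minor point: $\hat z_0=\s_{-\alpha}$ is the highest weight line of $W$, not the lowest, since $\q$ is a standard parabolic. This does not affect the conclusion $H^0(Z,\mathcal O(1))=W^*$.
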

\begin{proof}
  Let us denote by $G^{s}$ the semisimple part of the reductive Lie group
  $G=\textrm{Aut}(\widehat Z)$  and by $\g^{s}$ and $\g=\mathfrak{aut}(\widehat
  Z)\subset\mathfrak{gl}(W)$ the corresponding Lie algebras.  The first statement
  follows directly from the Bott--Borel--Weil Theorem applied to the complex flag
  variety $Z\cong G/P\cong G^{s}/P'$, where $P$ and $P'$ are the stabilizers in
  $G$ respectively $G^s$ of a point in $Z\subset \mathbb P
  W$, see e.g. Section 3.2.\,of \cite{MS}.  For the second statement, recall that the
  natural homomorphism $\g^s\hookrightarrow \mathfrak{aut}(\widehat Z)\rightarrow
  H^0(Z,TZ)$ identifies $\g^s$ with
  $H^0(Z,TZ)$, see e.g. Fact 3.1.\,of \cite{MS}. By the Bott--Borel-Weil theorem,
  the number of irreducible components of the $G^s$-representation $H^0(Z,
  TZ\otimes\mathcal O(1))$ equals the number of simple factors of
  $\g^s$ and the irreducible components are given by the Cartan products of the
  simple factors of $\g^s$ with
  $W^*$, i.e. their highest weights equal the sums of a highest root of a simple
  factor and of the highest weight of $W^*$ . Moreover, the
  $G^s$-equivariant map in question is the natural projection from $\g\otimes
  W^*\supset \g^s\otimes
  W^*$ to the sum of the Cartan products of the simple factors of $\g^s$ with $W^*$.
\end{proof}

Now Theorem \ref{thm_Xi_Spencer} and Lemma \ref{prop1_projgeom_subadjoint} imply:

\begin{thm}\label{thm_conic_conn_subadjoint} 
  Suppose $\C\subset \mathbb P TM$ is a cone structure of subadjoint type on a
  complex manifold $M$. Then the following holds:
\begin{enumerate}
\item Any conic connection on $\C$ is locally induced by a connection on the $G$-structure associated to $\C$.
\item Assume $\C$ is not of type $\s=B_3$. Then, if $\C$ admits a characteristic
  conic connection $\F$, then the conic connection $\F^{\textrm{can}}$ that is
  induced by the canonical connection in Corollary \ref{cor_can_connection} of the
  associated $G$-structure is characteristic and $\F=\F^{\textrm{can}}$. This happens
  if and only if the associated $G$-structure admits a torsion-free connection.
\end{enumerate}
\end{thm}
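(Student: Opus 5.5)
For (1) we reduce to the criterion of Hwang and Li. Theorem 3.6 of \cite{Hwang-Li-2} states that on a $Z$-isotrivial cone structure every conic connection is locally induced by a connection on the associated $G$-structure, provided two conditions hold: $H^0(Z,\mathcal O(1))=W^*$, and the map $\mathfrak{aut}(\widehat Z)\otimes W^*\to H^0(Z,TZ\otimes\mathcal O(1))$ is surjective. Lemma \ref{prop1_projgeom_subadjoint} verifies exactly these conditions for subadjoint varieties, so (1) follows.

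The underlying mechanism is as follows. Fix a connection $\gamma_0$ on $\cP$. Any conic connection $\F$ differs from $\F^{\gamma_0}$ by a section of $\textrm{Hom}(\F^{\gamma_0},\cV\cC)$. Along a fibre this is a section of $TZ\otimes\mathcal O(1)$, since $\F^{\gamma_0}_y\cong\hat y$. Surjectivity lets us write this fibrewise section as the image of a section of $T^*M\otimes\cP\times_G\g$. Changing $\gamma_0$ by this one-form then changes $\F^{\gamma_0}$ by precisely the given section. The only step needing care is holomorphic dependence on the base point. Locally this follows because the map in question is a constant $G$-equivariant surjection in a trivialization of $\cP$, so we can choose a holomorphic right inverse.

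For (2), let $\F$ be a characteristic conic connection. By (1), locally $\F=\F^\gamma$ for some connection $\gamma$. By Theorem \ref{thm_characteristic_torsion}, the torsion $\tau^M$ of $\gamma$ is a section of $\cP\times_G\Xi_Z$. Since $\s\neq B_3$, Theorem \ref{thm_Xi_Spencer} gives $\Xi_Z\subset\im(\partial_S)$. Hence $\tau^M=\partial_S(A)$ for a section $A$ of $T^*M\otimes\cP\times_G\g$; we choose $A$ via the inverse of $\partial_S$ on its image, which exists by the injectivity in Theorem \ref{thm_can_conn}.

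Replacing $\gamma$ by $\gamma-A$ changes the torsion by $-\partial_S(A)$, giving a torsion-free connection $\gamma'$. Its torsion, zero, lies in $T$, so by the uniqueness in Corollary \ref{cor_can_connection}, $\gamma'$ is the canonical connection. The local torsion-free connections are therefore restrictions of the global canonical connection, and this patches the local statements together.

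It remains to show $\F^{\gamma}=\F^{\gamma'}$. The difference $A$ is a one-form with values in $\g$. Its effect on the induced conic connection at $y\in\cC$ is the class of $A(w)\cdot w$ in $\widehat T_yZ/\hat y$, for $w\in\hat y$. This is the quantity measured by $\partial_S(A)(w,w')$ modulo $\hat y$ as $w'$ ranges over $\hat y$.

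This is the step I expect to be delicate. What is needed is that the change of $\F^\gamma$ under $\gamma\mapsto\gamma+A$ depends only on the symmetric part $A(w)w$. Moreover, $\partial_S$-preimages of torsion in $\Xi_Z$ that kill the torsion must have $A(w)w\in\hat w$ for all $w\in\widehat Z$.

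I would try to get this for free instead. Since $\gamma'$ is torsion free, $\F^{\gamma'}$ is characteristic by Theorem \ref{thm_characteristic_torsion}. Part (2) of Proposition \ref{uniqueness_of_charc_conn} applies because $Z$ is not a linear subspace, and it says a characteristic conic connection is unique. Hence $\F=\F^{\gamma'}=\F^{\mathrm{can}}$.

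The equivalence in (2) then follows directly. If a characteristic $\F$ exists, the argument above produces a torsion-free connection. Conversely, if a torsion-free connection $\gamma$ exists, then $\tau^M=0\in\Xi_Z$, so $\F^\gamma$ is characteristic by Theorem \ref{thm_characteristic_torsion}, and uniqueness identifies it with $\F^{\mathrm{can}}$.
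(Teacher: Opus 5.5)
Your proposal is correct and is essentially the paper's proof. Part (1) is Hwang--Li's Theorem 3.6 combined with Lemma \ref{prop1_projgeom_subadjoint}. In (2) the paper writes $\gamma=\gamma^{\textrm{can}}+\psi$ and observes that $\tau^M_{\gamma^{\textrm{can}}}=\tau^M_\gamma-\partial_S(\psi)$ lies in $\im(\partial_S)\cap(\ker(\square)\oplus\Wedge^3_0W^*\otimes L)=\{0\}$ by Theorems \ref{thm_Xi_Spencer} and \ref{thm_can_conn}; this is equivalent to your construction of $\gamma-A$ followed by uniqueness of the canonical connection. It then concludes $\F=\F^{\textrm{can}}$ from Proposition \ref{uniqueness_of_charc_conn}, exactly as you do.

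One aside is wrong but unused: you say the change of $\F^\gamma$ is measured by $\partial_S(A)(w,w')$ with $w'\in\hat y$, but that expression vanishes identically by skew-symmetry. Bypassing this step via uniqueness is the right move.
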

\begin{proof}
(1) This follows from Theorem 3.6 of \cite{Hwang-Li-2} and Lemma \ref{prop1_projgeom_subadjoint}.

(2) If $\F$ is a characteristic conic connection, then by (1) it is locally induced
by a principal connection $\gamma$ on the associated $G$-structure $\cP\subset
FM$. Denote by $\gamma^{\textrm{can}}$ the canonical principal connection on $\cP$
from Corollary \ref{cor_can_connection}. Then $\gamma=\gamma^{\textrm{can}}+\psi$,
where $\psi$ is a locally defined section of the bundle $\cP\times_G W^*\otimes \g$.
It follows that the difference between the torsions of $\gamma$ and
$\gamma^{\textrm{can}}$ is given by
$\tau^M_\gamma=\tau^M_{\gamma^{\textrm{can}}}+\partial_S(\psi)$. Then Theorem
\ref{thm_Xi_Spencer} and Theorem \ref{thm_characteristic_torsion} imply that
$\gamma^{\textrm{can}}$ is torsion-free and $\F^{\textrm{can}}$ is
characteristic. Due to Proposition \ref{uniqueness_of_charc_conn},
$\F=\F^{\textrm{can}}$ by uniqueness.
\end{proof}

\begin{thm} Suppose $\C\subset \mathbb P TM$ is a cone structure of subadjoint type
  $\neq B_3$ that admits a characteristic conic connection $\F$. Then the cubic
  torsion of $\F$ vanishes.  The associated $G$-structure is not necessarily flat,
  however.
\end{thm}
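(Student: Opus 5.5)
By Theorem \ref{thm_conic_conn_subadjoint}~(2), the characteristic conic connection $\F$ coincides with $\F^{\textrm{can}}$. Moreover, the canonical connection $\gamma:=\gamma^{\textrm{can}}$ is torsion-free, at least locally, which is where the theorem's argument produces it. Since the cubic torsion is a local invariant, it suffices to work on such a neighbourhood with $\F=\F^\gamma$ and $\tau^M=0$. Then $\tau^M$ trivially lies in $\cP\times_G\widetilde\Xi_Z$, so Theorem \ref{thm_cubic_torsion_zero} applies. The claim therefore reduces to showing that $\rho^M$ is a section of $\cP\times_G\Theta_Z$. I would aim for the stronger statement of the corollary following that theorem, namely $\rho^M\in\cP\times_G\widetilde\Theta_Z$.

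For torsion-free $\gamma$, the first Bianchi identity puts $\rho$ in the kernel of the alternation $\Wedge^2W^*\otimes\g\to\Wedge^3W^*\otimes W$. This kernel is the first prolongation-type module $K(\g)$ of $\g\subset\mathfrak{gl}(W)$. So the key algebraic step is to show $K(\g)\subset\widetilde\Theta_Z$, or at least $K(\g)\subset\Theta_Z$. Here $\g=\s_0$ is one of the special symplectic representations, and by Merkulov--Schwachh\"ofer / Cahen--Schwachh\"ofer \cite{MS,Cahen-Schwachhoefer} $K(\g)\cong\s_0$-module $\g$ itself. Explicitly, curvature tensors have the form
\[
R_A(x,y)=2A\,b(x,y)+x\circ_A y-y\circ_A x ,
\]
with $A\in\g$, and $x\circ y\in\g$ defined via the moment map $\mu:S^2W\to\g$, namely $(x\circ y)$ being $\mu$ applied after contracting with $A$ by the Killing form. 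I would then verify directly, using this explicit form, that for $z=z_0$ with $w\in\hat z_0=W_{-1}$ and $u\in W^{-2}$, the element $R_A(w,u)$ projects into $\g/\p$ as a multiple of the identity under \eqref{iso_iso}. By $G$-equivariance, checking the single point $z_0$ suffices.

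Concretely, $b(w,u)=0$ since $W^{-2}$ is Lagrangian (Corollary \ref{cor_contact_gr}). So only the terms $\mu(w\cdot u)$-type contributions remain. Here $\mu(w\,u)$ for $w\in\q_{-1}^F$ and $u\in\q_{-2}$ lies in weight $-3$ for the grading element $E$ of Proposition \ref{prop_contact_gr}. It hits $\g=\q_{-1}^V\oplus\q_0\oplus\q_1^V$ only after pairing with components of $A$, and the resulting map $\hat z_0\otimes\hat z_0\to\textrm{Hom}(\q_{-1}^V,\q_{-1}^V)$ must be checked to be scalar. I expect it to come from the component of $A$ in $\q_1^V$ composed with the bracket isomorphism $\q_{-1}^F\otimes\q_{-1}^V\cong\q_{-2}$. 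This grading/weight bookkeeping, which must be done uniformly, is the main obstacle. If it is too delicate, I would instead use that $\textrm{Hom}_0(\q_{-1}^V,\q_{-1}^V)\otimes(\q_{-1}^{F})^{*2}$ contains no $\p$-submodule compatible with the image of the irreducible module $K(\g)\cong\g$, via a weight argument with $E$.

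For the failure of flatness, I would invoke the existence of special symplectic connections with holonomy $G$ \cite{Cahen-Schwachhoefer,MS}. These are torsion-free connections on PACS structures of each subadjoint type and have nonzero curvature $R_A$. By \cite{Cap_Salac} they are the canonical connections, so they give cone structures admitting characteristic conic connections (hence with vanishing cubic torsion, by the above) whose associated $G$-structure is not flat.
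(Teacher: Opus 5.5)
Your reduction and your non-flatness argument are the paper's. Theorem \ref{thm_conic_conn_subadjoint} makes $\F$ the conic connection of the torsion-free canonical connection. Theorem \ref{thm_cubic_torsion_zero} then reduces the claim to the curvature taking values in $\Theta_Z$, and you rightly aim for $\widetilde\Theta_Z$. The examples from \cite{MS} give non-flatness. The gap is the one step with real content: the inclusion of the curvature module into $\widetilde\Theta_Z$ is never proved. You call it the main obstacle, and both routes you offer for it are set up incorrectly.

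First, the module is misidentified: $K(\g)$ is not the $\s_0$-module $\g$. Write $\rho=\rho^s+\beta\otimes\mathrm{Id}$ and fully alternate $b(\rho(x,y)z,w)$; the first Bianchi identity then gives $\beta\wedge b=0$. Hence $\beta=0$ once $\dim W\ge 6$, so $K(\g)=K(\g^s)$. The formula $R_A$ is only for $A\in\g^s$, and as a $G$-module $K(\g^s)\cong\g^s\otimes L^*$. This twist is exactly what your fallback weight argument needs. The grading element $E$ of Proposition \ref{prop_contact_gr} is not in $\g^s$, since it acts on $L$ by $-5$, so its eigenvalues on $K(\g^s)$ are $4,5,6$, not $-1,0,1$. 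For $G_2$, where $\dim W=4$, this reduction fails. There, however, $\widehat T_zZ/\hat z$ is a line, so $\Theta_Z$ is everything and nothing needs proving.

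With the correct weights the step takes one line, and this is how the paper argues. The component of $\psi\in K(\g^s)$ in $W_{-1}^*\wedge W_{-2}^*\otimes\g_{-1}$ would have $E$-eigenvalue $1+2-1=2\notin\{4,5,6\}$, so it vanishes. Thus $\psi(W^{-1},W^{-2})\subset\p$, and $G$-equivariance gives $\widetilde\Theta_Z$.

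In particular, the map $\hat z_0\otimes\hat z_0\to\operatorname{End}(\widehat T_{z_0}Z/\hat z_0)$ is zero, not a nonzero scalar. Your expectation that the $\q_1^V$-component of $A$ produces a scalar is mistaken. That component annihilates $W_{-1}$ and maps $W_{-2}$ into $W_{-1}$, and $\mu(W_{-1},W^{-2})=0$: for $x\in W_{-1}$ we have $\g\cdot x\subset W^{-2}$, and $W^{-2}$ is Lagrangian by Corollary \ref{cor_contact_gr}. Done correctly, your explicit route also works. For $w\in W_{-1}$ and $u\in W^{-2}$ one gets $R_A(w,u)=\mu(w,A_{-1}u)-\mu(u,A_{-1}w)\in\g_1\subset\p$, where $A_{-1}$ is the $\g_{-1}$-component of $A$. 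This is again $\widetilde\Theta_Z$.
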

\begin{proof}
By Theorem \ref{thm_conic_conn_subadjoint}, $\F$ is induced by the canonical connection $\gamma$ on the associated $G$-structure and the latter must be torsion-free. 
Note that, if $\cC$ is of type $G_2$, the cubic torsion of $\F$ vanishes automatically for dimensional reasons. Hence, we may assume $\cC$ is not of type $G_2$. Then,
by Theorem \ref{thm_cubic_torsion_zero}, 
in order to show that the cubic torsion of $\F$ vanishes, it is sufficient to show that the curvature $\rho$ of the associated $G$-structure, viewed as a $G$-equivariant function $\cP\rightarrow \Wedge^2 W^*\otimes \g$ 
has values in the space $\Theta_Z$ defined in \eqref{Theta_Z}. A description of the curvature $\rho$ for the torsion-free $G$-structures we consider here can be found in \cite{Cahen-Schwachhoefer, Cap_Salac}. Let us recall it here:
consider the semisimple part $\g^{s}$  of $\g$, which equals $\g\cap \mathfrak{sp}(W)$. Then there exists a natural $G$-equivariant inclusion
$$\g^{s}\otimes L^*\hookrightarrow \Wedge^2 W^*\otimes\g^{s};$$
a description in terms of the contact grading on
$\s$ can be found in Section 4.7 of \cite{Cap_Salac}. The curvature of
$\rho$ is then known to have values in the $G$-invariant subspace $K(\g^{s})\subset
\Wedge^2
W^*\otimes\g^{s}$ given by the image of the above inclusion. Let us fix a point
$z_0\in Z\subset \mathbb PW$ and let $P<G=\textrm{Aut}(\wh Z)$ be its
stabilizer.
By (4) of Proposition \ref{prop_contact_gr}, there exists an element $E$ in
$\p$ that acts diagonalizable on $W$ with eigenvalues $-1$, $-2$, $-3$, $-4$, on
$\g$ with eigenvalues $-1$, $0$, $1$ and on $L$ by multiplication by
$-5$, reflecting the decompositions \eqref{s-q-1} and \eqref{s-q-2}. Hence, its
eigenvalues on $K(\g^{ss})$ are $4,5$ and
$6$. If we write
$\g=\g_{-1}\oplus\g_0\oplus\g_1$ for the eigenspace decomposition on
$\g$ so that $\p=\g_0\oplus\g_1$, then $E$ acts by multiplication by
$2$ on $W_{-1}^*\wedge W_{-2}^*\otimes
\g_{-1}$. Therefore, we conclude that an element $\psi\in
K(\g^{s})$ has the property that $\psi(W_{-1},
W_{-2})$ is an element in the stabilizer $\p$ of $\hat
z_0=W^{-1}$. As this is true for any choice $z_0\in Z$, we conclude that
$\rho$ has values in $\widetilde\Theta_Z\subset
\Theta_Z$ as defined in \eqref{tildetheta_Z}, which proves the first claim. By the
classification of irreducible holonomies of torsion-free affine connections in
\cite{MS}, there exist torsion-free non-flat
$G$-structures of subadjoint type,--they are the structures realizing the special
complex symplectic holonomies in the table in Theorem C of \cite{MS}. Each of them
induces a cone structure of subadjoint type with characteristic conic connection with
vanishing cubic torsion, which proves the last claim.
\end{proof}

This is result should be compared to Theorem 1.5 of \cite{Hwang-Li-1}, where it is
shown that any VMRT-structure of subadjoint type $\neq
G_2$ is locally flat. Hence, for a cone structure of subadjoint type $\neq
G_2$ being VMRT is stronger than the existence of a characteristic conic connection
with vanishing cubic torsion.

\begin{rem}
  While the proofs of existence of linear connections with exotic symplectic holonomy
  provide a uniform argument for the existence of non-flat cone structures of
  subadjoint type that admit a characteristic conic connection with vanishing cubic
  torsion, simpler arguments are available for individual examples of structures. The
  point here is that in the setting of exotic holonomy it is crucial to exclude
  symmetric spaces, which in our setting are fine as examples. The simplest instance
  of an example of that type is a complex analog of a structure on the real
  Grassmannian of $2$-planes which is discussed in Theorem D of
  \cite{Cahen-Schwachhoefer}.

  Consider the group $G:=SO(n+2,\Bbb C)$ and let $H$ be the stabilizer of a complex
  two-plane on which the complex bilinear form defining the orthogonal group is
  non-degenerate. Thus $H\cong S(O(2,\Bbb C)\times O(n,\Bbb C))$ and in the standard
  presentation of the Lie algebra $\mathfrak g$ of $G$ as skew symmetric matrices,
  $\mathfrak h$ corresponds to matrices which are block-diagonal with blocks of sizes
  $2$ and $n$. This readily implies that $(\mathfrak g,\mathfrak h)$ is a symmetric
  pair, so there is an $H$-invariant complement $\mathfrak m\subset\mathfrak g$ to
  $\mathfrak h$ and $G/H$ is a complex symmetric space. Now $\mathfrak m$ is
  isomorphic to the space $M_{n,2}(\Bbb C)$ of complex $n\times 2$-matrices and
  $\{0\}\neq [\mathfrak m,\mathfrak m]\subset\mathfrak h$, so the canonical
  connection on $G/H$ is torsion-free but non-flat. From the construction it is also
  clear that the isotropy representation of $H$ on $\mathfrak m$ comes from the
  natural action of $O(2,\Bbb C)\times O(n,\Bbb C)$ on $M_{n,2}(\Bbb C)$ by matrix
  multiplication from both sides. But the corresponding action of
  $S(GL(2,\Bbb C)\times O(n,\Bbb C))$ gives rise to the subadjoint variety for the
  $B_n/D_n$-cases, so $G/H$ carries a canonical cone structure as claimed.
\end{rem}

\end{document}